\documentclass{amsart}
\usepackage{graphicx, amsmath,amsfonts, amssymb}
\usepackage{epstopdf}
\raggedbottom
\vfuzz2pt 
\hfuzz2pt 
\newtheorem{thm}{Theorem}[section]
\newtheorem{cor}[thm]{Corollary}
\newtheorem{lemma}[thm]{Lemma}
\newtheorem{prop}[thm]{Proposition}
\theoremstyle{definition}
\newtheorem{defn}[thm]{Definition}
\theoremstyle{remark}
\newtheorem{rem}[thm]{Remark}
\newtheorem{remark}[thm]{Remark}
\numberwithin{equation}{section}

\newcommand{\abs}[1]{\left\vert#1\right\vert}

\newcommand{\Real}{\mathbb R}
\newcommand{\Integ}{\mathbb Z}

\newcommand{\comments}[1]{}
\begin{document}

\author[Wes Camp and Michael Mihalik]{Wes Camp and Michael Mihalik}
\address{Department of Mathematics\\
        Vanderbilt University\\
        Nashville, TN 37240}
\email{w.camp@vanderbilt.edu,  michael.l.mihalik@vanderbilt.edu}

\title[]{A classification of right-angled Coxeter groups with no 3-flats and locally connected boundary}%
\maketitle
%

\begin{abstract}
If $(W,S)$ is a right-angled Coxeter system and $W$ has no $\mathbb Z^3$ subgroups, then it is shown that the absence of an elementary separation property in the presentation diagram for $(W,S)$ implies all CAT(0) spaces acted on geometrically by $W$ have locally connected CAT(0) boundary. It was previously known that if the presentation diagram of a general right-angled Coxeter system satisfied the separation property then all CAT(0) spaces acted on geometrically by $W$ have non-locally connected boundary. In particular, this gives a complete classification of the right-angled Coxeter groups with no 3-flats and with locally connected boundary.
\end{abstract}

\section{Introduction}
In this paper, we classify the right-angled Coxeter groups with no $\mathbb Z^3$ subgroups that have locally connected CAT(0) boundary. We say a CAT(0) group has locally (respectively, non-locally) connected boundary if all CAT(0) boundaries of the group are locally (respectively non-locally) connected.  Our main theorem states that if the Coxeter presentation of the group satisfies an elementary combinatorial condition then this group has locally connected boundary and otherwise has non-locally connected boundary. This condition was first considered in \cite{MRT}, where in fact, the authors conjectured that any right-angled Coxeter group has locally connected boundary iff the group presentation satisfies this condition. The primary working tool for both this paper and \cite{MRT} is the notion of a {\it filter} for  CAT(0) geodesics $r$ and $s$ in a CAT(0) space $X$ on which the Coxeter group $W$ acts geometrically. A filter is a connected, one-ended planar graph whose edges are labeled by the Coxeter generators $S$ of $W$. Hence there is a natural (proper) map of the filter into the Cayley graph of $(W,S)$, which in turn maps properly and $W$-equivariantly into the CAT(0) space $X$. The two sides of the filter track the geodesics $r$ and $s$ and the limit set of the filter is a connected set in $\partial X$ (the boundary of $X$), containing the limit points of $r$ and $s$. The idea is to construct a filter in such a way so that if $r$ and $s$ are ``close" in $\partial X$, then the filter has ``small" limit set containing the limit points $r$ and $s$, and local connectivity of the boundary of $X$ follows. 

In \cite{MRT}, two combinatorial conditions are defined on the Coxeter presentation of the group. The main theorem there states: if the first  condition holds then the Coxeter group has non-locally connected boundary, and if neither condition holds then the Coxeter group has locally connected boundary. In fact, when neither condition holds, the filters constructed in \cite{MRT} basically have hyperbolic geometry; i.e. any geodesic path in the Cayley graph from the base point of the filter to another point of the filter must track the filter geodesic connecting these two points (just as in a word hyperbolic group). In this paper, the geometry of our filters is necessarily more complex. The no $\mathbb Z^3$ subgroup hypothesis does restrict the pathology of the geometry of the filter,  but our results are the natural next step towards a full classification of right-angled Coxeter groups with locally connected boundary, and give hard evidence that the conjectured classification of \cite{MRT} is sound.

If a Coxeter group has no $\mathbb Z^2$ subgroup then it is word hyperbolic \cite{Mss}, and all one-ended word hyperbolic groups have (unique) locally connected boundary \cite{Swa}. Januszkiewicz and Swiatkowski (\cite{JS}) produce word hyperbolic, right-angled Coxeter groups of virtual cohomological dimension $n$ for all positive integers $n$, so our no $\mathbb Z^3$ hypothesis does not restrict the virtual cohomological dimension of the groups under consideration. In \cite{CK}, Croke and Kleiner exhibit a one-ended CAT(0) group with non-homeomorphic boundaries. Each of these boundaries is in fact connected but not path connected (see \cite{CMT}). In particular, (by classical point set topology)  these boundaries are not locally connected. It seems that many of the serious pathologies one sees in boundaries of CAT(0) groups, but not in boundaries of word hyperbolic groups, happen in the presence of non-local connectivity. At the time of this writing, no CAT(0) group has been shown to have non-homeomorphic boundaries, one of which is locally connected. There are numerous questions about how or even if the homology and homotopy of two boundaries of a CAT(0) group can differ. These questions may be more tractable if the boundaries considered are locally connected. If our results extend to all right-angled Coxeter groups then those with locally connected boundary should provide an interesting testing ground for such questions.

The paper is laid out as follows. In section 2,  basic definitions and background results are listed, including  a lemma (\ref{diamond}) that provides the fundamental combinatorial tool for measuring how large the limit set of a filter might be. In section 3, the basics of CAT(0) spaces and groups are outlined,
and we list two tracking results (developed in \cite{MRT}) that connect the CAT(0) geometry and algebraic combinatorics  of right-angled Coxeter groups. In section 4, we develop the idea of a filter, define a virtual factor separator and prove our main theorem:

\medskip

\begin{thm} \label{MTh}
Suppose $(W,S)$ is a one-ended right-angled Coxeter system  that has no visual subgroup isomorphic to $(\mathbb Z_2 \ast \mathbb Z_2)^3$. 
\begin{enumerate}
\item If $W$ visually splits as $(\mathbb Z_2\ast\mathbb Z_2)\times A$  then $A$ is word hyperbolic, $W$ has unique boundary homeomorphic to the suspension of  the boundary of $A$, and the boundary of $W$ is non-locally connected iff $A$ is infinite ended.
\item Otherwise, $W$ has locally connected boundary iff $(W,S)$ has no virtual factor 
separator.
\end{enumerate}
 \end{thm}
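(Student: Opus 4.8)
The plan is to treat the two cases by different means. For case~(1), write $W=(\mathbb{Z}_2\ast\mathbb{Z}_2)\times A=D_\infty\times A$. As $A$ is a visual subgroup, the absence of a visual $(\mathbb{Z}_2\ast\mathbb{Z}_2)^3$ in $W$ forces $A$ to contain no visual $(\mathbb{Z}_2\ast\mathbb{Z}_2)^2$; for a right-angled system this says the defining graph of $A$ has no induced $4$-cycle, equivalently $A$ has no $\mathbb{Z}^2$ subgroup, so by \cite{Mss} $A$ is word hyperbolic, while one-endedness of $W$ makes $A$ infinite. Now let $W$ act geometrically on a CAT(0) space $Y$. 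The index-two subgroup $G'=\langle t\rangle\times A$, with $t\in D_\infty$ of infinite order, centralizes the (semisimple, non-elliptic, hence hyperbolic) isometry $t$ and so preserves $\mathrm{Min}(t)\cong Y'\times\mathbb{R}$, which is closed, convex, $G'$-invariant and coarsely dense in $Y$ (the $G'$-invariant function $y\mapsto d(y,\mathrm{Min}(t))$ is bounded on the compact $Y/G'$); hence $\partial Y\cong\partial(Y'\times\mathbb{R})\cong S^0\ast\partial Y'=\Sigma\,\partial Y'$, with $A$ acting geometrically on $Y'$, and since the boundary of the hyperbolic group $A$ is unique, $\partial W$ is well defined and homeomorphic to $\Sigma\,\partial A$. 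Finally one uses the elementary fact that for a compact metric space $K$ the suspension $\Sigma K$ is locally connected iff $K$ is (a suspension point always has small connected truncated-cone neighborhoods, and every other point has a neighborhood that is a product of an interval with a neighborhood in $K$), together with \cite{Swa} (one-ended hyperbolic groups have locally connected boundary), the triviality of the boundary of a finite or two-ended group, and the standard fact that an infinitely-ended hyperbolic group has non-locally-connected boundary; this gives that $\partial W$ is non-locally connected iff $A$ is infinitely ended.

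For case~(2), the implication ``$(W,S)$ has a virtual factor separator $\Rightarrow$ $\partial W$ is non-locally connected'' is essentially \cite{MRT}: a virtual factor separator produces the separation property in the presentation diagram, which by \cite{MRT} forces non-local connectivity of every CAT(0) boundary of $W$; should the notion of virtual factor separator be formally weaker than the hypothesis of \cite{MRT}, one argues directly that a VFS yields two ``ends'' of a filter whose limit sets cannot be joined by arbitrarily small connected subsets of $\partial X$ --- but this is the easier direction. The substance of the theorem is the converse. To prove it, suppose $(W,S)$ has no virtual factor separator, let $W$ act geometrically on a CAT(0) space $X$ with basepoint $x_0$, and fix $\xi\in\partial X$ and $\eps>0$; since a compact metric space that is connected im kleinen at each point is locally connected, it suffices to produce, for every $\eta\in\partial X$ close enough to $\xi$, a connected subset of $\partial X$ of diameter $<\eps$ containing $\xi$ and $\eta$. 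Represent $\xi,\eta$ by CAT(0) geodesic rays $r,s$ from $x_0$; their closeness in $\partial X$ means $r$ and $s$ fellow-travel up to a large distance $T$, and via the tracking results of Section~3 this transfers to the Cayley graph of $(W,S)$.

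Now build the filter $F=F_\eta$ interpolating between the Cayley paths of $r$ and $s$: a connected, one-ended, planar, $S$-labeled graph, with two sides tracking $r$ and $s$, whose image in $X$ has connected limit set $\Lambda(F)\subseteq\partial X$ containing $\xi$ and $\eta$. The goal becomes $\mathrm{diam}\,\Lambda(F)<\eps$ once $T$ is large. I would obtain this by decomposing $F$ along the combinatorics of the diagram: repeated application of Lemma~\ref{diamond} cuts $F$ into a controlled family of pieces, each of which either tracks a common geodesic subsegment of $r$ and $s$ (and so contributes to $\Lambda(F)$ only a set whose diameter tends to $0$ with $T$) or is a bounded-complexity piece lying essentially inside a single flat of $X$; the no-$\mathbb{Z}^3$ hypothesis bounds the complexity of the latter pieces (they are at worst $2$-dimensional, so a geodesic from $x_0$ to a vertex of $F$ can be rerouted to track $F$ up to controlled flat-detours), and the absence of a virtual factor separator is exactly what forces the decomposition to terminate --- a VFS being the obstruction that would otherwise make $F$ branch into two sub-filters escaping in independent directions with far-apart limit sets. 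Summing the finitely many, individually small, contributions gives the bound, and as $X$ was arbitrary, $W$ has locally connected boundary.

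The main obstacle is this last diameter estimate. In \cite{MRT}, when neither of their conditions holds the filters are essentially hyperbolic --- every geodesic from $x_0$ to a point of $F$ tracks $F$ --- so the estimate is almost automatic. Under only the no-$\mathbb{Z}^3$ hypothesis this fails: $X$ genuinely contains $2$-flats, geodesics can shortcut through them, and a combinatorially long-and-thin filter can have a metrically fat image. The real work is therefore a structural analysis of geodesics in these CAT(0) spaces --- showing that, in the absence of $3$-flats, a geodesic from $x_0$ to a point of $F$ differs from a filter-tracking path only by boundedly many bounded-complexity flat detours, and that having no virtual factor separator keeps those detours from accumulating or escaping --- so that the combinatorial smallness of $F$ furnished by Lemma~\ref{diamond} genuinely becomes metric smallness of $\Lambda(F)$.
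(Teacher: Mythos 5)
Your treatment of case~(1) is correct and essentially the argument the paper has in mind (the paper states it without proof); the Min-set argument, the suspension, and the reduction to the local-connectivity of $\Sigma\,\partial A$ are all sound. The ``VFS $\Rightarrow$ non-locally connected'' half of case~(2) is also correctly attributed to \cite{MRT}.

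The gap is in the converse, which is the entire content of the theorem, and you have in effect conceded it. Your outline correctly identifies that one builds a filter and must convert combinatorial control (Lemma~\ref{diamond}) into metric control of the limit set, and you correctly identify the obstacle (geodesics from the basepoint shortcutting through flats). But your proposal for overcoming that obstacle --- ``a structural analysis of geodesics \ldots showing that a geodesic from $x_0$ to a point of $F$ differs from a filter-tracking path only by boundedly many bounded-complexity flat detours, and that having no virtual factor separator keeps those detours from accumulating'' --- is a restatement of what needs proving, not a proof. In particular, two specific ideas that carry the paper's argument are absent from yours. First, the no-$(\mathbb Z_2\ast\mathbb Z_2)^3$ hypothesis is not used to say ``pieces of $F$ lie in $2$-flats''; it is used, via back-combing geodesics and a wall-crossing count (Lemma~\ref{twodir}), to show that at each vertex of the filter there are \emph{at most two} admissible ``wide directions'' $U_1,U_2$ back toward $\ast$, and these directions can be propagated coherently along the filter (Lemmas~\ref{sharedwalls}, \ref{nochange}). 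Second, the no-virtual-factor-separator hypothesis is not a termination condition on a decomposition; it enters through Lemma~\ref{avoidlink}, which says one can build each fan so that its interior edge labels avoid $\mathrm{lk}(lett(A^i(\lambda)))\cup B(v\to\ast)$, i.e.\ avoid the link of the current wide direction. Lemma~\ref{avoidworks} then shows that with this choice, the walls of the new fan edges cannot appear on the up-edge path of a wide diamond, and combined with the alternating tree/non-tree edge designation and Remark~\ref{filterprops}(6), this forces the filter to stay $O(N^2\delta_0)$-close (in $X$) to the split point --- this is Lemma~\ref{twodirect} and the lemma following it. Without the two-directions classification and the link-avoiding fan construction, your plan has no mechanism that actually produces the diameter bound, so the hard direction remains unproved.
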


\begin{cor}
Suppose $(W,S)$ is a one-ended right-angled Coxeter system  that has no visual subgroup isomorphic to $(\mathbb Z_2 \ast \mathbb Z_2)^3$. Then 
all CAT(0) boundaries of $W$ are locally connected or all are non-locally connected.
\end{cor}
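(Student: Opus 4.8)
The plan is to deduce the corollary directly from Theorem \ref{MTh} by splitting into its two cases; the only real content is to rule out the ``mixed'' situation in which one CAT(0) boundary of $W$ is locally connected while another is not. Recall that, by the definitions fixed in the introduction, ``$W$ has locally connected boundary'' means \emph{every} CAT(0) space on which $W$ acts geometrically has locally connected boundary, and ``$W$ has non-locally connected boundary'' means every such space has non-locally connected boundary. So it suffices to show that $(W,S)$, under the standing hypotheses, satisfies one of these two statements.

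First, in case (1) of Theorem \ref{MTh} --- when $W$ visually splits as $(\mathbb Z_2\ast\mathbb Z_2)\times A$ --- the theorem asserts that $W$ has a \emph{unique} CAT(0) boundary, namely the suspension of $\partial A$. With only one boundary in play there is nothing to prove: it is locally connected when $A$ has at most one end and non-locally connected when $A$ is infinite-ended, but either way all CAT(0) boundaries of $W$ agree.

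Next, in case (2): if $(W,S)$ has no virtual factor separator, then Theorem \ref{MTh}(2) tells us outright that every CAT(0) boundary of $W$ is locally connected. If $(W,S)$ does have a virtual factor separator, I would invoke the strengthened form of the ``only if'' half of Theorem \ref{MTh}(2) (the analogue, in the present setting, of the non-local-connectivity result of \cite{MRT} recalled in the introduction): a virtual factor separator produces, in \emph{any} CAT(0) space $X$ acted on geometrically by $W$, filters / separating patterns whose limit sets in $\partial X$ have diameters bounded away from $0$, obstructing local connectivity of $\partial X$; hence every CAT(0) boundary of $W$ is non-locally connected. Thus in every case $(W,S)$ falls on exactly one side of the dichotomy, which is the assertion of the corollary.

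The hard part is not in the corollary itself but in the input it uses: that a virtual factor separator forces \emph{every} CAT(0) boundary to be non-locally connected, rather than merely producing \emph{one} non-locally connected boundary --- the latter being all that the formal contrapositive of the ``iff'' in Theorem \ref{MTh}(2) provides. Establishing this uniform statement is part of the proof of the main theorem (it is the $W$-equivariant separating-pattern construction carried out in Section 4), and everything else in the corollary is a routine case split together with an unwinding of the definitions of ``locally connected boundary'' and ``non-locally connected boundary''.
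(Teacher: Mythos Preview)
Your argument is correct and is exactly the intended deduction: the corollary has no separate proof in the paper and is meant to follow immediately from Theorem~\ref{MTh} via the case split you describe, using uniqueness of the boundary in case~(1) and, in case~(2), the fact that a virtual factor separator forces \emph{every} CAT(0) boundary to be non-locally connected. One small correction of attribution: that last fact is not established in Section~4 of this paper but is quoted there (and in the abstract and introduction) from \cite{MRT}; Section~4 only handles the converse direction (no virtual factor separator $\Rightarrow$ locally connected boundary).
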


The group $W$ visually splits as in item (1) of the theorem precisely when there are $s,t\in S$ such that $m(s,t)=\infty$ and $\{s,t\}$ commutes with $S-\{s,t\}$, so this condition is easily checked. 
If a CAT(0) group splits as $G=(\mathbb Z_2\ast \mathbb Z_2)\times A$ then any boundary of $G$ is the suspension of a boundary of $A$ (see \cite{MRT}) and this suspension is locally connected iff the boundary of $A$ is locally connected. If $(W,S)$ is a one-ended right-angled Coxeter system with no visual subgroup isomorphic to $(\mathbb Z_2 \ast \mathbb Z_2)^3$ and $W$ visually splits as $(\mathbb Z_2 \ast \mathbb Z_2)\times A$ , then $A$  is word hyperbolic (see \cite {Mss}). It is straightforward to check if a Coxeter group is infinite-ended (see Remark \ref{infend}). Thus item (1) of the theorem is easily verified and the real content of the theorem is contained in item (2). If $C\subset S$ is a virtual factor separator for $(W,S)$, then $W$ visually splits as an amalgamated product $\langle A\rangle\ast_{\langle C\rangle} \langle B\rangle$ (here $A$ and $B$ are subsets of $S$ with $A\cup B=S$ and $A\cap B=C$). Therefore local connectivity of boundaries of $W$ is directly tied to visual splittings.

In section 5, we give examples to show there are no combination or splitting results for right-angled Coxeter groups that respect local connectivity of boundaries. One example describes a right-angled Coxeter group as the (visual) amalgamated product $W=A\ast_CB$ where $A$ and $B$ are one-ended and word hyperbolic (so both have locally connected boundary) and $C$ is virtually a surface group (with boundary a circle), but $W$ has non-locally connected boundary. The second example describes a right-angled Coxeter group $W$ that visually splits as $A\ast_CB$, and a single element of infinite order in $C$ determines a boundary point of non-local connectivity in both $A$ and $B$. Nevertheless, our main theorem implies $W$ has locally connected boundary. 
These examples indicate there are no reasonable graph of groups approaches to this problem. Morse theory also seems unhelpful, but we do not expand here.

\section{Preliminaries}

We use \cite{Bou} and  \cite{Davis} as basic references for the results in this section.
\begin{defn}
A \textbf{ Coxeter system} is a pair $(W,S)$, where $W$ is a group with \textbf{Coxeter presentation}: 

\[
\langle{}S:(st)^{m(s,t)}\rangle
\]

\noindent where $m(s,t)\in\{1,2,\ldots, \infty\}$, $m(s,t)=1$ iff $s=t$, and $m(s,t)=m(t,s)$. The relation $m(s,s)=1$ means each generator is of order 2, and  $m(s,t)=2$, iff $s$ and $t$ commute.
\end{defn}

\begin{defn}
We call a Coxeter group $(W,S)$ \textbf{right-angled} if $m(s,t)\in\{2,\infty\}$ for all $s\neq t$.
\end{defn}

\bigskip

We are only interested in right-angled Coxeter groups in this paper but we state many of the lemmas of this section in  full generality.
In what follows, we will let $\Lambda=\Lambda(W,S)$ denote an abbreviated version of the Cayley graph for $W$ with respect to the generating set $S$. As usual, the vertices of $\Lambda$ are the elements of $W$, and there is an edge between the vertices $w$ and $ws$ for each $s\in S$, but instead of having two edges between adjacent vertices in the graph (since each generator has order 2), we allow only one. 

\begin{defn}
For a Coxeter system $(W,S)$, the \textbf{presentation graph} $\Gamma(W,S)$ for $(W,S)$ is the graph with vertex set $S$ and an edge labeled $m(s,t)$ connecting distinct $s,t\in{}S$ when $m(s,t)\ne \infty$.
\end{defn}

\begin{defn}\label{bar}
For a Coxeter system $(W,S)$, a {\bf word} in $S$ is an $n$-tuple $w=[a_1,a_2,\ldots, a_n]$, with each $a_i\in S$. Let $\overline w\equiv a_1\cdots a_n\in W$. We say the word $w$ is {\bf geodesic}  if there is no word $[b_1,b_2,\ldots, b_m]$ such that $m<n$ and $\overline w=b_1\cdots b_m$. Define $lett(w)\equiv\{a_1,\ldots , a_n\}$.
\end{defn} 

\begin{defn}\label{rearr}
For a  Coxeter system $(W,S)$, let $\overline e\in S$ be the label of the edge $e$ of $\Lambda(W,S)$. An {\bf edge path}  $\alpha\equiv (e_1,e_2,\ldots, e_n)$ in a graph $\Gamma$ is a map $\alpha:[0,n]\to \Gamma$ such that $\alpha$ maps $[i,i+1]$ isometrically to the edge $e_i$. For $\alpha$  an edge path in $\Lambda(W,S)$, let $lett(\alpha)\equiv \{\overline e_1, \ldots , \overline e_n\}$, and let $\overline \alpha\equiv\overline e_1\cdots \overline e_n$. If $\beta$ is another geodesic with the same initial and terminal points as $\alpha$, then call $\beta$ a {\bf rearrangement} of $\alpha$.
\end{defn}

\begin{lemma}
\label{sameletters}
Suppose $(W,S)$ is a Coxeter system, and $a$ and $b$ are $S$-geodesics for $w\in{}W$ (so $w=\overline a=\overline b$). Then $lett(a)=lett(b)$.
\end{lemma}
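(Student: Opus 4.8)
The plan is to deduce the statement from Tits' solution to the word problem for Coxeter groups. Recall (see \cite{Bou} or \cite{Davis}) that any two geodesic words for the same element of $W$ are related by a finite sequence of \emph{braid moves}: replacing a subword of the form $(\underbrace{s,t,s,\dots}_{m(s,t)})$ by $(\underbrace{t,s,t,\dots}_{m(s,t)})$ for some distinct $s,t\in S$ with $m(s,t)<\infty$. No cancellation moves (deleting a subword $(s,s)$) are needed here, precisely because we only ever pass between geodesic words.

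First I would observe that a single braid move leaves the \emph{set} of letters of a word unchanged, even though it can change the \emph{multiset} of letters when $m(s,t)$ is odd. Indeed, since $m(s,t)\ge 2$, the subword $(\underbrace{s,t,s,\dots}_{m(s,t)})$ contains both $s$ and $t$, and so does its replacement $(\underbrace{t,s,t,\dots}_{m(s,t)})$; every other entry of the word is untouched. Hence $lett(\cdot)$ is invariant under braid moves. The lemma then follows at once: as $a$ and $b$ are geodesic words with $\overline a=\overline b$, Tits' theorem gives a chain $a=c_0,c_1,\dots,c_k=b$ of geodesic words with each $c_{i+1}$ obtained from $c_i$ by one braid move, so $lett(a)=lett(c_0)=\cdots=lett(c_k)=lett(b)$.

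I do not expect a genuine obstacle: the substance is entirely contained in the normal form theorem, which is standard, and the only point requiring care is the remark that braid moves preserve letter sets but not letter multisets. If a self-contained argument avoiding an appeal to Tits' theorem is preferred, one can instead induct on $\ell(w)$, the $S$-length of $w=\overline a=\overline b$. Writing $s=a_1$ and $t=b_1$: if $s=t$, then $[a_2,\dots,a_n]$ and $[b_2,\dots,b_n]$ are geodesics for $sw$, which has shorter length, so the inductive hypothesis gives them the same letter set, and prepending $s$ finishes this case. If $s\ne t$, then $\ell(sw)<\ell(w)$ and $\ell(tw)<\ell(w)$ force the parabolic $\langle s,t\rangle$ to be finite and $w$ to admit a geodesic word $c=q\cdot c'$ whose prefix $q$ is the alternating word of length $m(s,t)$ in $s,t$; comparing $a$ with $c$ (both begin with $s$, so by the first case $lett(a)=lett(c)$) and $b$ with $q'\cdot c'$, where $q'$ is the other alternating word of length $m(s,t)$ and $\overline{q'}=\overline q$ (both begin with $t$, so $lett(b)=lett(q'\cdot c')$), and noting $lett(q)=\{s,t\}=lett(q')$, yields $lett(a)=lett(c)=lett(q'\cdot c')=lett(b)$. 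The only nontrivial input in this variant is the dihedral-parabolic fact just used, which is again found in \cite{Bou}, \cite{Davis}.
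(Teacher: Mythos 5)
Your proof is correct. Note that the paper does not actually prove this lemma; it is stated in the preliminaries as a standard background fact with \cite{Bou} and \cite{Davis} cited as general references for the section. Your primary argument via Tits' solution to the word problem (reduced words for a common element are connected by braid moves alone, and a braid move $(\underbrace{s,t,s,\dots}_{m(s,t)})\leftrightarrow(\underbrace{t,s,t,\dots}_{m(s,t)})$ with $m(s,t)\ge 2$ visibly preserves the \emph{set} of letters) is exactly the standard proof one finds in those references, and your self-contained induction on $\ell(w)$ using the finiteness of the dihedral parabolic $\langle a_1,b_1\rangle$ when both $\ell(a_1 w)<\ell(w)$ and $\ell(b_1 w)<\ell(w)$ is an equally valid alternative. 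Both are sound; no gap.
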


\begin{defn}
If $(W,S)$ is a Coxeter system and $A\subset S$, then $lk(A)\equiv\{t\in{}S: m(a,t)=2$ for all $a\in A\}$. So when $(W,S)$ is right-angled, $lk(A)$ is the combinatorial link of $A$ in $\Gamma(W,S)$, and the subgroups $\langle A\rangle$ and $\langle lk(A)\rangle$ of $W$ commute.
\end{defn}

\begin{lemma} \textbf{(The Deletion Condition)}.
Suppose $(W,S)$ is a Coxeter system. If the $S$-word $w=[a_{1},a_{2},\ldots,a_{n}]$  is not geodesic, then two of the $a_i$ delete; i.e. we have for some $i<j$, $\overline w=a_{1}a_{2}\cdots{}a_{n}=a_{1}a_{2}\cdots{}a_{i-1}a_{i+1}\cdots{}a_{j-1}a_{j+1}\cdots{}a_{n}$.
\end{lemma}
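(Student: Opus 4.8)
The plan is to derive the Deletion Condition from the \emph{Exchange Condition}, and to obtain the latter from the classical analysis of the canonical linear representation of $(W,S)$ and its root system (cf.\ \cite{Bou}, \cite{Davis}). Throughout write $\ell(w)$ for the word length of $w\in W$, i.e.\ the common length of the $S$-geodesics representing $w$ (Lemma \ref{sameletters} shows this is well defined). First I would set up the representation: let $V$ be the real vector space with basis $\{e_s:s\in S\}$, and let $B$ be the symmetric bilinear form with $B(e_s,e_t)=-\cos(\pi/m(s,t))$, read as $-1$ when $m(s,t)=\infty$. For $s\in S$ define $\rho(s)\in GL(V)$ by $\rho(s)(v)=v-2B(e_s,v)e_s$. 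A direct computation restricted to the plane $\mathrm{span}\{e_s,e_t\}$ shows $\rho(s)^2=\mathrm{id}$ and $(\rho(s)\rho(t))^{m(s,t)}=\mathrm{id}$, so $\rho$ defines a homomorphism $W\to GL(V)$ under which $B$ is invariant. Call the elements of $\Phi:=\{\rho(w)e_s:w\in W,\ s\in S\}$ \emph{roots}, and say a root is \emph{positive} (resp.\ \emph{negative}) if, in the basis $\{e_s\}$, all of its coefficients are $\ge 0$ (resp.\ $\le 0$).

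The technical heart is a simultaneous induction on $\ell(w)$ establishing: (i) for each $w\in W$ and $s\in S$, either $\ell(ws)=\ell(w)+1$ and $\rho(w)e_s$ is a positive root, or $\ell(ws)=\ell(w)-1$ and $\rho(w)e_s$ is a negative root; (ii) every root is positive or negative; (iii) $\rho(s)$ carries exactly one positive root to a negative root, namely $e_s$ (this follows from (ii), since if a positive root $\alpha\ne e_s$ has a positive coefficient on some $e_t$ with $t\ne s$ then so does $\rho(s)\alpha$, forcing $\rho(s)\alpha$ positive); and (iv) for $w\in W$ the set $N(w):=\{\alpha\in\Phi:\alpha\text{ positive},\ \rho(w)\alpha\text{ negative}\}$ has $|N(w)|=\ell(w)$. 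The inductive step uses a left descent $s'$ of $w$ and a case split on the $\rho(s')$-images of the relevant roots.

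Granting (i)--(iv), I would prove the Exchange Condition: if $s_1\cdots s_r$ is reduced and $\ell(s_1\cdots s_r s)\le r$ for some $s\in S$, then by (i) the root $\rho(s_1\cdots s_r)e_s$ is negative, so inspecting the signs of $\rho(s_{k+1}\cdots s_r)e_s$ for $k=r,r-1,\dots,0$ (which starts at $e_s$, positive, and ends negative) yields an index $i$ with $\rho(s_{i+1}\cdots s_r)e_s=e_{s_i}$, using (iii). Then $t:=(s_{i+1}\cdots s_r)\,s\,(s_{i+1}\cdots s_r)^{-1}$ is a conjugate of a generator with $\rho(t)=\rho(s_i)$; since $N(\cdot)$ depends only on the linear action, (iii) and (iv) give $|N(t)|=|N(s_i)|=1$, so $\ell(t)=1$, so $t\in S$, and as distinct generators have distinct images in $GL(V)$ we conclude $t=s_i$. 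Unwinding $t=s_i$ gives $s_1\cdots s_r s=s_1\cdots s_{i-1}s_{i+1}\cdots s_r$, the Exchange Condition.

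Finally, Deletion follows from Exchange: given a non-geodesic word $[a_1,\dots,a_n]$, let $j$ be least such that $[a_1,\dots,a_j]$ is non-geodesic; then $[a_1,\dots,a_{j-1}]$ is reduced and $\ell\big((a_1\cdots a_{j-1})a_j\big)\le j-1=\ell(a_1\cdots a_{j-1})$, so Exchange produces $i<j$ with $a_1\cdots a_{j-1}a_j=a_1\cdots a_{i-1}a_{i+1}\cdots a_{j-1}$; right-multiplying by $a_{j+1}\cdots a_n$ gives $\overline w=a_1\cdots a_{i-1}a_{i+1}\cdots a_{j-1}a_{j+1}\cdots a_n$, exactly the claimed deletion of $a_i$ and $a_j$. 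I expect the main obstacle to be the simultaneous induction (i)--(iv) itself; within it, the step promoting the linear identity $\rho(t)=\rho(s_i)$ to the group identity $t=s_i$ (via the inversion count (iv)) is the subtlest point, while everything downstream is routine bookkeeping.
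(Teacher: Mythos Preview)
Your proposal is correct, but there is nothing to compare it against: the paper does not prove the Deletion Condition. It is listed in Section~2 among standard preliminaries with the blanket citation ``We use \cite{Bou} and \cite{Davis} as basic references for the results in this section,'' and no argument is given. What you have outlined is precisely the classical proof found in those references (Tits' geometric representation, positivity/negativity of roots, the inversion-set count $|N(w)|=\ell(w)$, then Exchange, then Deletion), so in that sense your approach is the intended one.

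Two small remarks. First, your parenthetical invocation of Lemma~\ref{sameletters} to justify that $\ell$ is well defined is unnecessary: $\ell(w)$ is simply the minimum length of any $S$-expression for $w$, which requires nothing. Second, the step where you promote $\rho(t)=\rho(s_i)$ to $t=s_i$ via $|N(t)|=1\Rightarrow\ell(t)=1\Rightarrow t\in S$ is fine, but you should make explicit the one-line computation that conjugating the reflection $\rho(s)$ by the $B$-orthogonal map $g=\rho(s_{i+1}\cdots s_r)$ yields the reflection in $ge_s=e_{s_i}$, i.e.\ $\rho(t)=\rho(s_i)$; you assert this but do not quite say why.
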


For a  Coxeter system $(W,S)$, an edge path $\alpha=(e_1,e_2, \ldots, e_n)$ in $\Lambda (W,S)$ is geodesic iff the word $[\overline e_1,\overline e_2,\ldots, \overline e_n]$ is geodesic. If $\alpha$ is not geodesic and $\overline e_i$ deletes with $\overline e_j$, for $i<j$, let $\tau$ be the  the path beginning at the end point of $e_{i-1}$ with edge labels $[\overline e_{i+1},\ldots ,\overline e_{j-1}]$. Then $\tau$ ends at the initial point of $e_{j+1}$, so that $(e_1,\ldots, e_{i-1},\tau, e_{j+1},\ldots, e_n)$ is a path with the same end points as $\alpha$. We say the edges $e_i$  and $e_j$ {\bf delete} in $\alpha$. 

\begin{defn}
If $(W,S)$ is a Coxeter system and $A\subset{}S$, then the subgroup of $W$ generated by $A$ is called a \textbf{special} (or \textbf{visual}) subgroup of $W$.
\end{defn}

\begin{lemma}
Suppose $(W,S)$ is a Coxeter system, and $A\subset{}S$. Then the special subgroup $\langle{}A\rangle$ of $W$ has Coxeter (sub)-presentation

\[
\langle{}A:(st)^{m(s,t)};s,t\in{}A\rangle
\]

In particular, distinct $s,t\in{}S$ determine unique elements of $W$, and $m(s,t)$ is the order of $st$ for all $s,t\in{}S$.
\end{lemma}

\begin{lemma}
\label{differentletters}
Suppose $(W,S)$ is a Coxeter system, and $U,V\subset{}S$, with $U\cap{}V=\emptyset$. If $u$ is a geodesic in the letters of $U$ and $v$ is a geodesic in the letters of $V$, then $[u,v]$ is an $S$-geodesic.
\end{lemma}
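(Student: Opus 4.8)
The plan is to argue by contradiction via the Deletion Condition, with the one substantive input being that two disjoint visual subgroups intersect trivially. Assume $[u,v]$ is not an $S$-geodesic; the claim is trivial if $u$ or $v$ is empty, so suppose not. Writing $[u,v]=[a_1,\ldots,a_n]$ with $a_1,\ldots,a_p$ the letters of $u$ and $a_{p+1},\ldots,a_n$ the letters of $v$, the Deletion Condition supplies positions $i<j$ whose two letters delete, i.e. may be removed without changing the element of $W$ represented.

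First I would rule out $i,j$ both lying in the $u$-block: were that so, cancelling $\overline v$ on the right from the resulting identity would express $\overline u$ as the value of a word of length $\ell(u)-2$, contradicting that $u$ is geodesic; symmetrically they cannot both lie in the $v$-block. Hence the letter at position $i$ comes from $u$ and that at position $j$ comes from $v$. Let $u''$ (resp. $v''$) be obtained from $u$ (resp. $v$) by deleting that single letter; the Deletion Condition then gives $\overline u\,\overline v=\overline{u''}\,\overline{v''}$, so $g:=\overline{u''}^{-1}\,\overline u=\overline{v''}\,\overline v^{-1}$ lies in $\langle U\rangle\cap\langle V\rangle$, and $g\ne 1$ since $\overline u=\overline{u''}g$ would otherwise have length at most $\ell(u)-1$.

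The main point is then that $\langle U\rangle\cap\langle V\rangle=\{1\}$ when $U\cap V=\emptyset$. Given $w$ in that intersection, I would fix an $S$-geodesic $c$ for $w$; expressing $w$ as a word in the letters of $U$ and shortening it with the Deletion Condition (which never introduces new letters) produces an $S$-geodesic for $w$ all of whose letters lie in $U$, so Lemma \ref{sameletters} forces $lett(c)\subseteq U$, and symmetrically $lett(c)\subseteq V$; thus $lett(c)\subseteq U\cap V=\emptyset$ and $w=1$. Applying this to $g$ gives $g=1$, the contradiction sought. Apart from this intersection fact the argument is just bookkeeping with the Deletion Condition; one could instead induct on $\ell(v)$, peeling off its last letter $s$ and using the Deletion Condition to locate the earlier letter against which $s$ cancels (it must be a letter of $u$, since cancellation inside $v$ would contradict $v$ being geodesic), but that route relies on the same triviality-of-intersection input, so I would present the direct version above.
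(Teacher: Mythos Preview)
Your argument is correct. The paper itself does not supply a proof of this lemma: it is listed among the preliminary facts in Section~2, which the authors explicitly attribute to the standard references \cite{Bou} and \cite{Davis}, so there is no in-paper proof to compare against. Your route---Deletion Condition forces a cross-cancellation between the $u$-block and the $v$-block, which produces a nontrivial element of $\langle U\rangle\cap\langle V\rangle$, contradicting the standard fact (derived here from Lemma~\ref{sameletters}) that disjoint visual subgroups intersect trivially---is exactly the classical argument one finds in those references, so nothing is missing.
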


\begin{defn}
For $(W,S)$ a Coxeter system and $\alpha$ a geodesic in $\Lambda(W,S)$, let $B(\alpha)\equiv\{\overline e\in{}S:$ $e$ is a $\Lambda$-edge at the end of $\alpha$ and $(\alpha,e) \text{ is not geodesic}\}$.
\end{defn}

\begin{lemma}
\label{finiteback}
Suppose $(W,S)$ is a Coxeter system, and $\alpha$ a geodesic in $\Lambda$. Then $B(\alpha)$ generates a finite group.
\end{lemma}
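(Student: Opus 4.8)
The plan is to restate the claim in terms of descent sets and then invoke the reflection representation of $(W,S)$. Writing $\overline{\alpha}=w$ and letting $\ell$ denote $S$-word length, an edge $e$ at the end of $\alpha$ with $\overline{e}=s$ makes $(\alpha,e)$ non-geodesic exactly when $\ell(ws)<\ell(w)$, so $B(\alpha)=\{s\in S:\ell(ws)<\ell(w)\}$ and the assertion is the classical fact that the right-descent set of any element of a Coxeter group generates a finite special subgroup.

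For the proof I would work in the canonical linear representation of $(W,S)$ on $\mathbb{R}^S$, with simple roots $\{\alpha_s\}_{s\in S}$ and root system $\Phi=\Phi^+\sqcup\Phi^-$, recalling from \cite{Bou}, \cite{Davis} that: (i) every root is a nonnegative or a nonpositive combination of the simple roots; (ii) for $w\in W$ and $s\in S$, $\ell(ws)<\ell(w)$ iff $w(\alpha_s)\in\Phi^-$; and (iii) $\ell(w)=|\{\beta\in\Phi^+:w(\beta)\in\Phi^-\}|$, which is finite. Set $J=B(\alpha)$ and let $\Phi_J$ be the root system of $\langle J\rangle$, i.e. the set of roots lying in the span of $\{\alpha_s:s\in J\}$, with positive part $\Phi_J^+=\Phi_J\cap\Phi^+$. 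Any $\beta\in\Phi_J^+$ is a nonnegative combination $\beta=\sum_{s\in J}c_s\alpha_s$; since $w(\alpha_s)\in\Phi^-$ for each $s\in J$ by (ii), linearity together with (i) shows that $w(\beta)=\sum_{s\in J}c_s\,w(\alpha_s)$ is a nonpositive combination of simple roots which is itself a root, hence lies in $\Phi^-$. Thus $\Phi_J^+\subseteq\{\beta\in\Phi^+:w(\beta)\in\Phi^-\}$, so $|\Phi_J^+|\le\ell(w)<\infty$ by (iii); since $\langle J\rangle$ acts faithfully on its own roots, $\langle B(\alpha)\rangle=\langle J\rangle$ is finite.

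In the right-angled setting of this paper the argument degenerates to something elementary: it suffices to show $m(s,t)<\infty$ for every pair $s\ne t$ in $B(\alpha)$, since then $B(\alpha)$ is a clique in $\Gamma(W,S)$ and $\langle B(\alpha)\rangle$ is a finite elementary abelian $2$-group. For a fixed such pair, apply the standard minimal-coset-representative property to the dihedral subgroup $\langle s,t\rangle$: write $w=w_0u$ with $u\in\langle s,t\rangle$, $\ell(w)=\ell(w_0)+\ell(u)$, and $\ell(w_0v)=\ell(w_0)+\ell(v)$ for every $v\in\langle s,t\rangle$. Then $\ell(ws)<\ell(w)$ and $\ell(wt)<\ell(w)$ translate into $\ell(us)<\ell(u)$ and $\ell(ut)<\ell(u)$ inside $\langle s,t\rangle$, which is impossible when $\langle s,t\rangle$ is infinite dihedral, since a nontrivial element there has exactly one right descent among $\{s,t\}$. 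Hence $\langle s,t\rangle$ is finite, i.e. $m(s,t)<\infty$.

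The main thing to watch is that the pairwise argument does \emph{not} suffice for general Coxeter systems: a subset of $S$ all of whose pairs are spherical can still generate an infinite group, as with the affine diagram $\widetilde A_2$, so in general one genuinely needs the global input of the second paragraph --- equivalently, the theorem that a Coxeter group admitting an element whose right-descent set is all of its generating set is finite. Beyond that the only delicate points are the sign bookkeeping in (i)--(iii) and the fact that, since $\Phi$ may be infinite, finiteness of $\langle J\rangle$ has to be read off from finiteness of the inversion set of $w$ rather than from any a priori bound.
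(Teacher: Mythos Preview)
Your proof is correct. The paper does not supply its own argument for this lemma: it lists Lemma~\ref{finiteback} among the standard preliminaries in Section~2, citing \cite{Bou} and \cite{Davis} as blanket references. Your root-system argument is precisely the classical one found in those sources (the key point being $\Phi_J^+\subseteq N(w)$, whence $|\Phi_J|<\infty$ and $\langle J\rangle$ is finite), so you are reproducing the proof the paper defers to rather than offering an alternative. The separate elementary treatment of the right-angled case and the cautionary remark about $\widetilde A_2$ are not in the cited references in that form, but they are correct and germane to the paper's setting.
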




\begin{lemma}
\label{sameletter}
If $(W,S)$ is a right-angled Coxeter system, and $s,t\in S$ delete in some $S$-word. Then $s=t$.
\end{lemma}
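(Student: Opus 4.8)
The plan is to unwind the definition of ``delete'' and then reduce everything to a one-line computation in the abelianization of $W$. By definition, to say that $s$ and $t$ delete in some $S$-word means there is a non-geodesic word $w=[a_1,\ldots,a_n]$ and indices $i<j$ with $a_i=s$, $a_j=t$, and
\[
a_1\cdots a_n \;=\; a_1\cdots a_{i-1}a_{i+1}\cdots a_{j-1}a_{j+1}\cdots a_n
\]
in $W$ (this is the Deletion Condition, with the two deleted positions being those of $s$ and $t$). Cancelling $a_1\cdots a_{i-1}$ on the left and $a_{j+1}\cdots a_n$ on the right, this is equivalent to the single relation $a_i a_{i+1}\cdots a_j=a_{i+1}\cdots a_{j-1}$; setting $g=a_{i+1}\cdots a_{j-1}\in W$ (with $g=1$ in the degenerate case $j=i+1$) and using $a_i=s$, $a_j=t$, this reads $s\,g\,t=g$.

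Next I would pass to the abelianization. Since $(W,S)$ is right-angled, every defining relator of the Coxeter presentation is either $s^2$ for some $s\in S$ or $(st)^2$ for some $s,t\in S$ with $m(s,t)=2$, and under abelianization $(st)^2$ maps to $2\bar s+2\bar t$, which is already a consequence of the relations $2\bar s=0=2\bar t$. Hence $W^{\mathrm{ab}}\cong\bigoplus_{s\in S}\mathbb Z_2$, a $\mathbb Z_2$-vector space having the images $\bar s$ of the generators as a basis; in particular distinct elements of $S$ have distinct images. Applying the abelianization homomorphism to $s\,g\,t=g$ and using commutativity of the target gives $\bar s+\bar t=0$, i.e.\ $\bar s=\bar t$ in $\bigoplus_{s\in S}\mathbb Z_2$, and therefore $s=t$.

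There is essentially no serious obstacle here; the only point needing a little care is extracting the relation $sgt=g$ from the definition of ``delete'' (in particular handling the case $j=i+1$, where one gets $st=1$ directly). The right-angled hypothesis is used only through the elementary observation that such a group abelianizes to a free $\mathbb Z_2$-module on $S$. (Equivalently, and avoiding any mention of abelianization, one may fix $t\in S$ and use the homomorphism $\phi_t\colon W\to\mathbb Z_2$ sending $t$ to $1$ and every other generator to $0$, which is well defined for exactly the same reason; then $1=\phi_t(t)=\phi_t\bigl(g^{-1}sg\bigr)=\phi_t(s)$, forcing $s=t$.)
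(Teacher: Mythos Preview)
Your argument is correct: extracting $sgt=g$ from the definition of deletion and then passing to the abelianization $W^{\mathrm{ab}}\cong(\mathbb Z_2)^{|S|}$ cleanly forces $s=t$. Note, however, that the paper does not actually give a proof of this lemma; it is listed among the standard preliminaries in Section~2 with the blanket reference to Bourbaki and Davis, so there is no argument in the paper to compare against. Your abelianization approach is one of the standard proofs of this fact (the other common route is via the exchange condition or Tits' solution to the word problem, showing directly that in a right-angled system any deletion arises from commuting a letter past intervening letters until it meets a copy of itself).
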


\begin{lemma}
\label{radel}
Suppose $(W,S)$ is a right-angled Coxeter system, $[a_1, a_2, \dots, a_n]$ is geodesic and $[a_1, a_2, \dots, a_n, a_{n+1}]$ is not. Then $a_{n+1}$ deletes with some $a_m$. If $i\neq n+1$ is the largest integer such that ${a_i}={a_{n+1}}$, then $a_{n+1}$ deletes with $a_i$ and ${a_{n+1}}$ commutes with each letter ${a_{i+1}},{a_{i+2}},\dots,{a_n}$.
\end{lemma}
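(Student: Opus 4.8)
The first sentence is immediate. Since $[a_1,\dots,a_n]$ is geodesic, no two of $a_1,\dots,a_n$ can delete (that would give a shorter word for $\overline{[a_1,\dots,a_n]}$), so applying the Deletion Condition to the non-geodesic word $[a_1,\dots,a_n,a_{n+1}]$ forces $a_{n+1}$ to delete with some $a_m$, $m\le n$; Lemma \ref{sameletter} then gives $a_m=a_{n+1}$.

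For the second sentence, write $s=a_{n+1}$. The plan is to recast the claim in terms of reduced words and commutation moves. From the deletion just found, $\overline{[a_1,\dots,a_n,s]}=\overline{[a_1,\dots,\widehat{a_m},\dots,a_n]}$ has length $n-1$, so $[a_1,\dots,\widehat{a_m},\dots,a_n,s]$ is a geodesic word (of length $n$) for $w:=\overline{[a_1,\dots,a_n]}$ ending in $s$. Because $(W,S)$ is right-angled, the only braid moves available are transpositions of adjacent commuting generators, so by Matsumoto's theorem (see \cite{Bou}, \cite{Davis}) the words $[a_1,\dots,a_n]$ and $[a_1,\dots,\widehat{a_m},\dots,a_n,s]$ are joined by a chain of such \emph{commutation moves}. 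I would then track individual letter occurrences along the chain, using the following rigidity: a commutation move only transposes two \emph{adjacent, commuting} letters, so if an occurrence $A$ precedes an occurrence $B$ in one word and the two letters do not commute --- in particular if $A$ and $B$ are two occurrences of the same generator, since $m(x,x)=1\neq 2$ --- then $A$ precedes $B$ in every word reachable from it. Let $i$ be the largest index with $a_i=s$. Since occurrences of $s$ never swap, the occurrence of $s$ at position $i$ remains the \emph{last} occurrence of $s$ throughout the chain, hence it is exactly the terminal letter of the geodesic word ending in $s$. Now suppose some $j$ with $i<j\le n$ had $m(s,a_j)=\infty$; by maximality of $i$, $a_j\neq s$. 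The occurrence of $a_j$ at position $j$ lies to the right of this last occurrence of $s$ in $[a_1,\dots,a_n]$, so by rigidity it must lie to its right in the word ending in $s$ as well --- impossible. Hence $m(s,a_j)=2$ for all such $j$, i.e.\ $a_{n+1}=a_i$ commutes with each of $a_{i+1},\dots,a_n$.

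Finally, using these commutations one slides $a_i$ to the right: $\overline{[a_1,\dots,a_n]}=a_1\cdots a_{i-1}\,a_{i+1}\cdots a_n\,a_i$, and right-multiplying by $a_{n+1}=a_i$ (which has order $2$) gives $\overline{[a_1,\dots,a_n,a_{n+1}]}=\overline{[a_1,\dots,\widehat{a_i},\dots,a_n]}$, so $a_{n+1}$ deletes with $a_i$. The real obstacle is that the Deletion Condition alone is too weak here: it yields only that $s$ commutes with the \emph{product} $a_{i+1}\cdots a_n$, and separating this into commutation with each factor is exactly where the maximality of $i$ must be exploited, through the rigidity of commutation moves in a right-angled system. (If one wished to avoid citing Matsumoto's theorem, an equivalent route would be the standard description of the centralizer of a generator, $C_W(s)=\langle\{s\}\cup lk(s)\rangle$: then the geodesic $\overline{[a_{i+1},\dots,a_n]}$, commuting with $s$ and not involving $s$, lies in $\langle lk(s)\rangle$, and Lemma \ref{sameletters} shows each $a_j\in lk(s)$.)
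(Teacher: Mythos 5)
Your argument is correct. The paper itself gives no proof of Lemma \ref{radel}; like the other facts in Section 2 it is stated as a standard preliminary, with \cite{Bou} and \cite{Davis} cited as general references, so there is no in-paper proof to compare against. Your route --- first obtain the geodesic rewriting $[a_1,\dots,\widehat{a_m},\dots,a_n,s]$ of $w$ ending in $s$, then invoke the Matsumoto--Tits theorem (in a right-angled system only commutation moves occur) and track individual occurrences, noting that a commutation move never reorders two occurrences with non-commuting labels, in particular two occurrences of the same generator --- is the standard ``heap'' argument for commutation classes, and it correctly isolates the last occurrence of $s$ as the terminal letter, from which the commutation of $s$ with $a_{i+1},\dots,a_n$ and the deletion with $a_i$ both follow. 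One small imprecision worth flagging: the parenthetical alternative via $C_W(s)=\langle\{s\}\cup lk(s)\rangle$ asserts that $\overline{[a_{i+1},\dots,a_n]}$ commutes with $s$, but what the Deletion Condition gives directly is that $\overline{[a_{m+1},\dots,a_n]}$ commutes with $s$ for the deletion index $m$, and $m=i$ is not yet known at that point. This is repairable --- by the centralizer fact and Lemma \ref{sameletters} every $a_j$ with $m<j\le n$ lies in $\{s\}\cup lk(s)$, and none can equal $s$ (two $s$'s separated only by letters of $lk(s)$ would delete inside the geodesic $[a_1,\dots,a_n]$), which forces $m=i$ --- but as written the parenthetical uses the conclusion in its own premise. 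The main proof, however, stands on its own.
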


\begin{defn}
Suppose  $\Gamma$ is the presentation graph of a Coxeter system $(W,S)$, and $C\subset S$ separates the vertices of $\Gamma$. Let $A'$ be the vertices of a component of $\Gamma-C$ and $B=S-A'$. Let $A=A'\cup C$. Then $W$ splits as $\langle A\rangle \ast_{\langle C\rangle}\langle B\rangle$ (see \cite{MT}) and this splitting is called a {\bf visual splitting} for  $(W,S)$.
\end{defn}

\begin{defn}
Let $(W,S)$ be a Coxeter system, and let $e$ be an edge of $\Lambda(W,S)$ with initial vertex $v\in W$. The \textbf{wall} $w(e)$ is the set of edges of $\Lambda(W,S)$ each fixed (setwise) by the action of the conjugate $v\overline ev^{-1}$ on $\Lambda$.
\end{defn}

\begin{remark}\label{wallsw}
Certainly $e\in w(e)$ and if $d$ is an edge of  $w(e)$,  with vertices $u$ and $w$, then $(v\overline ev^{-1})u=w$  and $(v\overline ev^{-1})w=u$. Also,  $\Lambda(W,S)-w(e)$ has exactly two  components and these components are interchanged by the action of $v\overline ev^{-1}$ on $\Lambda(W,S)$.

If $(W,S)$ is right-angled, then given an edge $a$ of $\Lambda(W,S)$ with initial vertex $y_1$ and terminal vertex $y_2$, $a$ is in the same wall as $e$ iff there is an edge path $(t_1, \dots, t_n)$ in $\Lambda(W,S)$ based at $w_1$ so that $w_1 \overline t_1 \cdots \overline t_n=y_1$ and $w_2 \overline t_1 \cdots \overline t_n=y_2$, where $y_1$ and $y_2$ are the vertices of $e$ and $m(\overline e, \overline t_i)=2$ for each $1\leq i \leq n$.
\end{remark}

\begin{defn}
Let $(W,S)$ be a right-angled Coxeter system. We say the walls $w(e)\neq w(d)$ of $\Lambda(W,S)$ {\bf cross} if there is a relation square in $\Lambda(W,S)$ with edges in $w(e)$ and $w(d)$. 
\end{defn}

\begin{remark} \label{wallprops}
We have the following basic properties of walls in a right-angled Coxeter system $(W,S)$:
\begin{enumerate}
\item If edges $a$ and $e$ of $\Lambda(W,S)$ are in the same wall, then $\overline a=\overline e $. 

\item Being in the same wall is an equivalence relation on the set of edges of $\Lambda(W,S)$.

\item If $(e_1,e_2,\ldots,e_n)$ is an edge path in $\Lambda(W,S)$ then $e_i$ and $e_j$ are in the same wall iff $\overline e_i$ and $\overline e_j$ delete in the word $[e_1,e_2,\ldots, e_n]$. Furthermore, the path $(e_{i+1}',\ldots, e_{j-1}')$ that begins at the initial point of $e_i$, and has the same labeling as $(e_{i+1},\ldots ,e_{j-1})$, ends at the end point of $e_j$ and $w(e_k)=w(e_k')$ for all $i<k<j$. If $\gamma$ is a path in $\Lambda(W,S)$, then $\gamma$ is geodesic iff no two edges of $\gamma$ are in the same wall.

\item If $\gamma$ and $\tau$ are geodesics in $\Lambda(W,S)$ between the same two points, then the edges of $\gamma$ and $\tau$ define the same set of walls.

\end{enumerate}
\end{remark}

The basics of van Kampen diagrams can be found in Chapter 5 of \cite{LS}. Suppose $(W,S)$ is a right-angled Coxeter system. We need only consider relation squares with boundary labels $abab$ in van Kampen diagrams for right-angled Coxeter groups (since those of the type $aa$ are easily removed).
Let $(w_1, \dots, w_n)$ be an edge path loop in $\Lambda(W,S)$, so $\overline w_1\dots \overline w_n=1$ in $W$.  Consider a van Kampen diagram $D$ for this word. For a given boundary edge $d$ of $D$ (corresponding to say $w_i$), $d$ can belong to at most one relation square of $D$ and there is an edge  $d_1$ opposite $d$ on this square. Similarly, if $d_1$ is not a boundary edge, it belongs to a unique relation square adjacent to the one containing $d$ and $d_1$. Let $d_2$ be the edge opposite $d_1$ in the second relation square.  These relation squares define a {\it band} in $D$ starting at $d$ and ending at say $d'$ on the boundary of $D$ and corresponding to some $w_j$ with $j\neq i$. This means that $w_i$ and $w_j$ are in the same wall. However, $w_k$ and $w_\ell$ being in the same wall does not necessarily mean that they are part of the same band in $D$; but if $(w_1, \dots, w_r)$ and $(w_{r+1}, \dots, w_n)$ are both geodesic, then by (3) in the above remark, bands in $D$ correspond exactly to walls in $\Lambda(W,S)$. This is the situation we will usually consider.

\begin{lemma}
\label{shortback}
Let $(W,S)$ be a right-angled Coxeter system, and let $\gamma$ be a geodesic in $\Lambda(W,S)$ with initial vertex $x$ and terminal vertex $y$. Let $A$ be a set of edges of $\gamma$, and $\tau_A$ be a shortest path based at $x$ containing an edge in the same wall as $a$ for all $a\in A$. Then $\tau_A$ can be extended to a geodesic to $y$.
\end{lemma}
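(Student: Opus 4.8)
The plan is to translate everything into the language of walls and half‑spaces. Recall (from Remark \ref{wallprops} and the standard wall picture) that each wall of $\Lambda(W,S)$ cuts $\Lambda$ into two half‑spaces, that a geodesic from $x$ to $y$ meets precisely the walls separating $x$ from $y$, each exactly once, and that a vertex $z$ lies on a geodesic from $x$ to $y$ iff every wall separating $x$ from $z$ also separates $x$ from $y$. Write $\mathcal W_A:=\{w(a):a\in A\}$; since $A$ consists of edges of the geodesic $\gamma$ from $x$ to $y$, every wall of $\mathcal W_A$ separates $x$ from $y$. Say a path has \emph{property $P$} if it contains an edge in each wall of $\mathcal W_A$. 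So $\tau_A$ is a minimal‑length path from $x$ with property $P$, and it suffices to prove (i) $\tau_A$ is geodesic and (ii) its terminal vertex $z$ lies on a geodesic from $x$ to $y$; then $\tau_A$ followed by a geodesic from $z$ to $y$ is geodesic.

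For (i), suppose $\tau_A=(e_1,\dots,e_\ell)$ is not geodesic and let $(e_1,\dots,e_r)$ be a maximal geodesic initial segment. By Lemma \ref{radel}, $\overline e_{r+1}$ deletes with $\overline e_m$ for the largest $m\le r$ with $\overline e_m=\overline e_{r+1}$, and $\overline e_{r+1}$ commutes with $\overline e_{m+1},\dots,\overline e_r$; put $w_0:=w(e_m)=w(e_{r+1})$. If $w_0\notin\mathcal W_A$, or if $\tau_A$ has another edge in $w_0$, then deleting $e_m$ and $e_{r+1}$ (Remark \ref{wallprops}(3)) gives a strictly shorter path still meeting every wall of $\mathcal W_A$, contradicting minimality. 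Otherwise $w_0\in\mathcal W_A$ and $\tau_A$ meets $w_0$ only at $e_m$ and $e_{r+1}$; reading off the half‑spaces of $w_0$, the segment $(e_1,\dots,e_{m-1})$ stays on the $x$‑side of $w_0$, $(e_{m+1},\dots,e_r)$ on the far side, and $(e_{r+2},\dots,e_\ell)$ returns to the $x$‑side. Since $\overline e_{r+1}$ commutes with $\overline e_{m+1},\dots,\overline e_r$, one may delete $e_m$, slide the block $e_{m+1},\dots,e_r$ back one step, and insert a single $\overline e_{r+1}$‑edge immediately after it; by Remark \ref{wallsw} the initial vertex of this new edge differs from an initial vertex of $f_p$ (where $w(f_p)=w_0$) by a word of generators commuting with $\overline e_{r+1}$, so the new edge again lies in $w_0$. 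The resulting path has length $\ell-1$, still meets $w_0$, and still meets every other wall of $\mathcal W_A$: a required wall met only by the suffix $(e_{r+2},\dots,e_\ell)$ must cross $w_0$ (otherwise it would separate $x$ from $w_0$ and hence be met by the prefix $(e_1,\dots,e_{m-1})$), and walls crossing $w_0$ are preserved by this relocation. This contradicts minimality, so $\tau_A$ is geodesic.

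For (ii), let $\sigma$ be a geodesic from $z$ to $y$ and consider the concatenation $\tau_A\sigma$. Suppose some wall $w^*$ is met by both $\tau_A$ and $\sigma$. As $\tau_A$ and $\sigma$ are geodesic, $w^*$ separates $x$ from $z$ and $z$ from $y$, so $x$ and $y$ lie on the same side of $w^*$; hence $w^*$ separates neither, and in particular $w^*\notin\mathcal W_A$. Let $e^*$ be the unique edge of $\tau_A$ in $w^*$ and $g^*$ the unique edge of $\sigma$ in $w^*$. By Remark \ref{wallprops}(3) these delete in $\tau_A\sigma$, and discarding them yields a path from $x$ to $y$ whose initial $\ell(\tau_A)-1$ edges form a path from $x$ meeting exactly $\mathcal W(\tau_A)\setminus\{w^*\}\supseteq\mathcal W_A$ — contradicting the minimality of $\tau_A$. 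Therefore $\tau_A$ and $\sigma$ meet disjoint sets of walls, so $\tau_A\sigma$ has no wall met twice and is geodesic; thus $\tau_A$ extends to a geodesic to $y$.

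The main obstacle is step (i), specifically the subcase in which a naive deletion of a redundant pair of edges would remove the only edge of $\tau_A$ lying in a wall of $\mathcal W_A$; the relocation argument — which leans on the commutation clause of Lemma \ref{radel} and on Remark \ref{wallsw}'s criterion for two like‑labeled edges to share a wall — is the delicate point. Step (ii) and the remaining parts of (i) are routine bookkeeping with half‑spaces and Remark \ref{wallprops}.
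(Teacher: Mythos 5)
Your proof takes a genuinely different route from the paper's. The paper proves the statement in one shot by looking at a van Kampen diagram for the loop $(\tau_A,\lambda,\gamma^{-1})$ (with $\lambda$ a geodesic from the endpoint of $\tau_A$ to $y$) and tracking where bands must terminate, never separately establishing that $\tau_A$ is geodesic. You instead decompose the claim into (i) $\tau_A$ geodesic and (ii) $\tau_A$ and a geodesic $\sigma$ to $y$ cross disjoint sets of walls, and you argue wall-by-wall. Step (ii) and case (a) of step (i) are fine; the overall strategy is sound, and (ii) is cleaner than the paper's phrasing of the same point.

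However, case (b) of step (i) contains a real gap. After you delete $e_m$ and slide the commuting block back, the ``new $\overline e_{r+1}$-edge'' you insert is literally the edge $e_{r+1}$ (it sits at the same vertex with the same label), so the relocated path is forced to diverge from the original at that point: its suffix from there on is the image of the original suffix $(e_{r+2},\dots,e_\ell)$ under the reflection in $w_0$ (the element $v_r\,\overline e_{r+1}\,v_r^{-1}$), not a translate by something commuting with each $\overline e_k$. Your proof then asserts, without justification, that ``walls crossing $w_0$ are preserved by this relocation.'' That assertion is in fact true, but the reason is the nontrivial fact that a reflection in a wall $w_0$ fixes (setwise) exactly those walls that equal or cross $w_0$, so the reflected suffix still meets every required wall that crosses $w_0$. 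As written, the proof neither identifies the relocation as a reflection nor invokes this fact, so a reader has no way to verify the key step. (Your parenthetical appeal to Remark \ref{wallsw} and to an undefined edge $f_p$ only shows the new edge lies in $w_0$; it does not address the suffix walls.) You should either supply the reflection argument explicitly, or — more economically — adopt the paper's band-counting in a single van Kampen diagram, which sidesteps the need to prove $\tau_A$ geodesic at all. A secondary, cosmetic point: in your nesting argument you only treat the sub-case where $w(e_k)$ lies between $x$ and $w_0$; the other sub-case is vacuous because the suffix lives on the $x$-side of $w_0$, and it would be worth saying so.
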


\begin{proof}
Let $v$ denote the endpoint of $\tau_A$, and let $\lambda$ be a geodesic from $v$ to $y$.  Let $\tau_A=(a_1, \dots, a_n)$ and consider a van Kampen diagram $D$ for $(\tau_A, \lambda, \gamma^{-1})$. If $W(a_j)=W(a)$ for some $a\in A$ and the band for $a_j$ does not end on $\gamma$, then it must end on $\lambda$, by (3) of Remark \ref{wallprops}. However, then the band for $a$ cannot end on $\lambda$, $\gamma$, or $\tau_A$ (which is impossible). 
Therefore the band for $a_j$ must end on the edge of $D$ corresponding to the edge $a$ of $\gamma$. Now suppose for some $1\leq i \leq n$, the band for $a_i$ ends on $\lambda$. Deleting edges of $(\tau_A, \lambda)$ corresponding to this shared wall gives a path shorter than $\tau_A$ with an edge in the same wall as $a$ for all $a\in A$ (see Remark \ref{wallprops} (3)), a contradiction. 


Therefore, all bands on $\lambda$ and $\tau_a$ end on $\gamma$, so $(\tau_a, \lambda)$ has the same length as $\gamma$ and is therefore  geodesic.
\end{proof}

The following lemma has some of its underlying ideas in Lemma 5.10 of \cite{MRT}. It is an important tool for measuring the size of (connected) sets in the boundaries of our groups and is used repeatedly in our proof of the main theorem.

\begin{lemma}
\label{diamond}
Suppose $(W,S)$ is a right-angled Coxeter system, and $(\alpha_1,\alpha_2)$ and $(\beta_1,\beta_2)$ are geodesics in $\Gamma(W,S)$ between the same two points. There exist geodesics $(\gamma_1,\tau_1), (\gamma_1,\delta_1), (\delta_2,\gamma_2)$, and $(\tau_2,\gamma_2)$ with the same end points as $\alpha_1,\beta_1,\alpha_2,\beta_2$ respectively, such that:
\begin{enumerate}
\item $\tau_1$ and $\tau_2$ have the same edge labeling,
\item $\delta_1$ and $\delta_2$ have the same edge labeling, and
\item $lett(\tau_1)$ and $lett(\delta_1)$ are disjoint and commute.
\end{enumerate}
Furthermore, the paths $(\tau_1^{-1},\delta_1)$ and $(\delta_2,\tau_2^{-1})$ are geodesic. 
\end{lemma}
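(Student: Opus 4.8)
The plan is to realise the diamond geometrically using two ``medians''. Let $x,y$ be the common endpoints, $p$ the common point of $\alpha_1,\alpha_2$, and $q$ the common point of $\beta_1,\beta_2$. By Remark~\ref{wallprops}(4) the geodesics $(\alpha_1,\alpha_2)$ and $(\beta_1,\beta_2)$ cross the same set $\mathcal W$ of walls, and each $w\in\mathcal W$ is crossed by exactly one of $\alpha_1,\alpha_2$ and exactly one of $\beta_1,\beta_2$; this partitions $\mathcal W=\mathcal W_{11}\sqcup\mathcal W_{12}\sqcup\mathcal W_{21}\sqcup\mathcal W_{22}$, with $\mathcal W_{ij}$ crossed by $\alpha_i$ and $\beta_j$. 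Writing $H_w(z)$ for the component of $\Lambda(W,S)-w$ containing a vertex $z$, one reads off: $\mathcal W_{11}$ separates $\{x\}$ from $\{p,q,y\}$, $\mathcal W_{12}$ separates $\{x,q\}$ from $\{p,y\}$, $\mathcal W_{21}$ separates $\{x,p\}$ from $\{q,y\}$, and $\mathcal W_{22}$ separates $\{x,p,q\}$ from $\{y\}$. Since $\Lambda(W,S)$ is the $1$-skeleton of the Davis complex, which for a right-angled system is a CAT(0) cube complex, $\Lambda(W,S)$ is a median graph; let $\mu$ be the median of $x,p,q$. The majority characterisation of $\mu$ shows at once that the walls separating $x$ from $\mu$ are exactly $\mathcal W_{11}$, those separating $\mu$ from $p$ are $\mathcal W_{12}$, and those separating $\mu$ from $q$ are $\mathcal W_{21}$; in particular $\mu$ lies on a geodesic from $x$ to $p$ and on one from $x$ to $q$. (Equivalently $\mu$ can be produced inside $\Lambda(W,S)$ by two applications of Lemma~\ref{shortback}.) Choose geodesics $\gamma_1\colon x\to\mu$, $\tau_1\colon\mu\to p$, $\delta_1\colon\mu\to q$. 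Then $(\gamma_1,\tau_1)$ crosses $\mathcal W_{11}\sqcup\mathcal W_{12}$ with no wall repeated, so by Remark~\ref{wallprops}(3) it is a geodesic; its endpoints are those of $\alpha_1$, and likewise $(\gamma_1,\delta_1)$ is a geodesic with the endpoints of $\beta_1$.

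Next I would establish condition~(3). The key claim is that every $a\in\mathcal W_{12}$ crosses every $b\in\mathcal W_{21}$. Indeed $\mathcal W_{12}\cap\mathcal W_{21}=\emptyset$, so $a\neq b$; and from the descriptions above, all four intersections of a component of $\Lambda(W,S)-a$ with a component of $\Lambda(W,S)-b$ are nonempty, since they contain $x$, $p$, $q$, $y$ after suitably naming the two sides of each wall, which forces $a$ and $b$ to cross. (This is the standard hyperplane-crossing criterion; alternatively, in a van Kampen diagram for $\alpha_1\alpha_2(\beta_1\beta_2)^{-1}$ the bands dual to $\mathcal W_{12}$ and those dual to $\mathcal W_{21}$ have interleaved endpoints on the boundary, hence must cross.) A crossing pair of walls lies on a common relation square with boundary word $\overline a\,\overline b\,\overline a\,\overline b$ and $\overline a\neq\overline b$, so $m(\overline a,\overline b)=2$. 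Since $\tau_1$ crosses exactly the walls of $\mathcal W_{12}$ and $\delta_1$ exactly those of $\mathcal W_{21}$, we get $lett(\tau_1)=\{\overline w:w\in\mathcal W_{12}\}$ and $lett(\delta_1)=\{\overline w:w\in\mathcal W_{21}\}$, and these two sets are disjoint and commute elementwise. Given this, the final ``Furthermore'' clause is immediate from Lemma~\ref{differentletters}: $(\tau_1^{-1},\delta_1)$ is a concatenation of two geodesic words in disjoint letter sets, hence geodesic, and similarly for $(\delta_2,\tau_2^{-1})$ once those paths are defined.

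It remains to build the bottom of the diamond, which is now forced. Because $lett(\tau_1)$ and $lett(\delta_1)$ are disjoint and commute, the elements $\overline{\tau_1}=\mu^{-1}p$ and $\overline{\delta_1}=\mu^{-1}q$ commute; put $\nu:=\mu\,\overline{\tau_1}\,\overline{\delta_1}=p\,\mu^{-1}q=q\,\mu^{-1}p$. Let $\delta_2$ be the edge path out of $p$ with the same labelling as $\delta_1$, and $\tau_2$ the edge path out of $q$ with the same labelling as $\tau_1$; both terminate at $\nu$, which gives (1) and (2). A short computation with the commuting relations shows the $i$-th edge of $\delta_2$ (respectively $\tau_2$) lies in the same wall as the $i$-th edge of $\delta_1$ (respectively $\tau_1$); hence $\delta_2,\tau_2$ are geodesics, the walls separating $p$ from $\nu$ are $\mathcal W_{21}$, and those separating $q$ from $\nu$ are $\mathcal W_{12}$. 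Now fix a geodesic $\gamma_2\colon\nu\to y$. The walls separating $p$ from $\nu$ form the subset $\mathcal W_{21}$ of the set $\mathcal W_{21}\sqcup\mathcal W_{22}$ of walls separating $p$ from $y$, so $\nu$ lies on a geodesic from $p$ to $y$, and hence the walls separating $\nu$ from $y$ are exactly $\mathcal W_{22}$. Therefore $(\delta_2,\gamma_2)$ crosses $\mathcal W_{21}\sqcup\mathcal W_{22}$ with no repeats and so is a geodesic with the endpoints of $\alpha_2$; symmetrically $(\tau_2,\gamma_2)$ crosses $\mathcal W_{12}\sqcup\mathcal W_{22}$ and is a geodesic with the endpoints of $\beta_2$. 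This produces all four geodesics with the required properties.

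The step I expect to be the main obstacle is the existence and correct identification of $\mu$ (and, dually, of $\nu$): that one can split $\alpha_1$ and $\beta_1$ simultaneously as $(\gamma_1,\tau_1)$ and $(\gamma_1,\delta_1)$ so that $\tau_1$ and $\delta_1$ cross exactly the complementary wall-classes $\mathcal W_{12}$ and $\mathcal W_{21}$, neither overshooting nor undershooting $\mathcal W_{11}$. The median (cube-complex) viewpoint makes this transparent, but a proof staying inside the Cayley graph must extract it from Lemma~\ref{shortback} with some care; this is also where the right-angled hypothesis is genuinely used.
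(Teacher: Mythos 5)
Your argument is correct, but it routes the construction through the median-graph/cube-complex structure of $\Lambda(W,S)$ rather than the van~Kampen diagram rearrangement the paper uses. The paper takes a van~Kampen diagram for $\alpha_1\alpha_2\beta_2^{-1}\beta_1^{-1}$, picks out the edges $a_1,\dots,a_n$ of $\alpha_1$ sharing a wall with an edge of $\beta_1$ (your $\mathcal W_{11}$), and observes by band interleaving that any earlier wall of $\alpha_1$ crosses $w(a_1)$, so $\alpha_1$ can be successively rearranged to $(\gamma_1,\tau_1)$ with $\gamma_1$ carrying exactly $\mathcal W_{11}$; the same is done for $\beta_1$, and symmetrically from the $y$-end for $\alpha_2,\beta_2$. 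You instead name the four wall classes $\mathcal W_{ij}$ explicitly, recognise $\Lambda(W,S)$ as a median graph, take $\mu$ to be the median of $x,p,q$, and read off the decomposition from the majority characterisation of $\mu$; condition~(3) then follows from the standard hyperplane-crossing criterion (all four quadrants nonempty), which you correctly observe is equivalent to the band-interleaving argument. Your version has some genuine advantages: the four classes are explicit, the bottom vertex $\nu=p\mu^{-1}q$ is constructed directly, and $\tau_2,\delta_2$ are defined edge-for-edge with the same labelling as $\tau_1,\delta_1$ (verified wall-by-wall via Remark~\ref{wallsw}), whereas the paper's proof only concludes that $\tau_1,\tau_2$ and $\delta_1,\delta_2$ have the same sets of walls and leaves the stronger ``same edge labelling'' required by conditions~(1)--(2) to be supplied by the reader. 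The trade-off is that you import median/cube-complex facts not developed in the paper; as you indicate, Lemma~\ref{shortback} gives a self-contained substitute for producing $\mu$, but justifying that the resulting shortest path crosses only $\mathcal W_{11}$ amounts to the same majority argument, so the two viewpoints converge there.
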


\begin{center}
\includegraphics{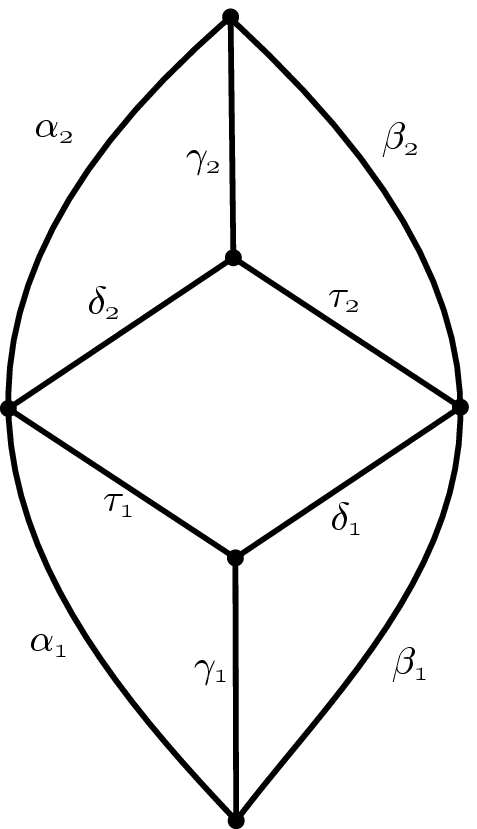}
\end{center}

\centerline{\textsc{Figure 1}}

\medskip

\begin{proof}

Consider a van Kampen diagram for the loop $(\alpha_1, \alpha_2, \beta_2^{-1}, \beta_1^{-1})$ (Figure 1), and recall that since $(\alpha_1, \alpha_2)$ and $(\beta_1, \beta_2)$ are geodesic, bands in this van Kampen diagram correspond exactly to walls in $\Lambda(W,S)$. Let $a_1, \dots, a_n$ be the edges of $\alpha_1$ (in the order they appear on $\alpha_1$) that are in the same wall as an edge of $\beta_1$. Notice that if $e$ is an edge of $\alpha_1$ occurring before $a_1$, then $w(e)$ crosses $w(a_1)$. Therefore $\alpha_1$ can be rearranged to begin with an edge in $w(a_1)$, since $\overline{a}_1$ commutes with every edge label of $\alpha_1$ before it. Similarly, $w(a_2)$ must cross $w(e)$ for any edge $e\neq a_1$ of $\alpha_1$ occurring before $a_2$, so $\alpha_1$ can be rearranged to begin with an edge in $w(a_1)$ followed by an edge in $w(a_2)$. Continuing for each $a_i$ gives us a rearrangement $(\gamma_1, \tau_1)$ of $\alpha_1$ where the walls of $\gamma_1$ are exactly $w(a_1), \dots, w(a_n)$. If $b_1, \dots, b_m$ are the edges of $\beta_1$ in the same wall as an edge of $\alpha_1$, then the same process gives us a rearrangement $(\gamma_1', \delta_1)$ of $\beta_1$ where the walls of $\gamma_1'$ are exactly $w(b_1), \dots, w(b_m)$. However, $\{w(a_1), \dots, w(a_n)\}=\{w(b_1), \dots, w(b_m)\}$, so $m=n$ and $\gamma_1$ and $\gamma_1'$ are geodesics between the same points, so $(\gamma_1, \delta_1)$ is a rearrangement of $\beta_1$. Construct rearrangements $(\delta_2, \gamma_2)$ and $(\tau_2,\gamma_2)$ of $\alpha_2$ and $\beta_2$ respectively in the same way, and note that $\tau_1$ and $\tau_2$ have the same walls, $\delta_1$ and $\delta_2$ have the same walls, and every wall of $\tau_1$ crosses every wall of $\delta_1$. In particular, (see Remark \ref{wallprops} (3)) ($\tau_1^{-1},\delta_1)$ is geodesic.
\end{proof}

\begin{remark}
For the entirety of this paper, we will only consider the case of Lemma \ref{diamond} where $\abs{\alpha_1}=\abs{\beta_1}$. In this case, $\abs{\tau_1}=\abs{\tau_2}=\abs{\delta_1}=\abs{\delta_2}$, so the \textbf{diamond} formed by the loop $\tau_1^{-1} \delta_1 \tau_2 \delta_2^{-1}$ is actually a product square. If $y$ is the endpoint of $\alpha_1$ and $\mu$ is any other geodesic between the same points as $(\alpha_1, \alpha_2)$, the diamond between $(\alpha_1, \alpha_2)$ and $\mu$ at $y$ is therefore well defined. We call $\tau_1^{-1}$ the \textbf{down edge path} at $y$ and $\delta_2$ the {\bf up edge path} at $y$ of the diamond for $(\alpha_1, \alpha_2)$ and $(\beta_1, \beta_2)$.
\end{remark}

\begin{lemma}
\label{doublediamond}

Suppose $(W,S)$ is a right-angled Coxeter system with no visual subgroup isomorphic to $(\Integ_2 * \Integ_2)^3$. Let $\lambda_1$, $\lambda_2$, $\lambda_3$ be $\Lambda(W,S)$-geodesics between two points $a$ and $b$, and let $x_1$, $x_2$, $x_3$ be points on $\lambda_1$, $\lambda_2$, $\lambda_3$ respectively, such that the $x_i$ are all equidistant from $a$. Let $\nu_{12}$ and $\nu_{13}$ be the down edge paths respectively of the diamonds at $x_1$ between $\lambda_1$ and $\lambda_2$ and between $\lambda_1$ and $\lambda_3$, as in Lemma \ref{diamond}, and suppose $|\nu_{12}|\geq |\nu_{13}|\geq 2{\abs S}$. If $\{a,b\}\subset lett(\nu_{12})\cap lett(\nu_{13})$ and $m(a,b)=\infty$, then $d(x_2, x_3) < 2(|\nu_{12}|-|\nu_{13}|) + 4{\abs S}$. 

\end{lemma}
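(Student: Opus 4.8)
The plan is to exploit the two diamonds at $x_1$ to produce long segments in the walls determined by $a$ and $b$, then use the absence of a $(\Integ_2 \ast \Integ_2)^3$ subgroup to control how these segments interact. First I would set up the combinatorics: since $\{a,b\} \subset lett(\nu_{12})$ and $m(a,b) = \infty$, the down edge path $\nu_{12}$ at $x_1$ must contain many edges labeled $a$ and many labeled $b$; more precisely, among the first $|\nu_{12}|$ edges there are at least $(|\nu_{12}| - (|S|-2))$ edges labeled by $a$ or $b$ (the other letters of $lett(\nu_{12})$ contribute boundedly, since $\langle lett(\nu_{12}) \setminus \{a,b\}\rangle$ together with $a,b$ would otherwise force a large product and eventually a $(\Integ_2\ast\Integ_2)^3$). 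The same holds for $\nu_{13}$. Recall from Lemma \ref{diamond} and the following remark that $\nu_{12} = \tau_1^{-1}$ is a \emph{down edge path}, and $(\tau_1^{-1}, \delta_1)$ is geodesic with $lett(\tau_1)$ and $lett(\delta_1)$ disjoint and commuting; so the walls crossing at $x_1$ in the $\lambda_1$-$\lambda_2$ diamond all commute with those in $\nu_{12}$, and similarly for $\nu_{13}$.

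Next I would compare $\nu_{12}$ and $\nu_{13}$ directly, using that both emanate from $x_1$ along $\lambda_1$ (both are rearrangements of an initial segment of $\lambda_1$ read backwards, or more precisely both are down edge paths of diamonds at the same point $x_1$ with respect to the \emph{same} geodesic $\lambda_1$). Since $|\nu_{12}| \ge |\nu_{13}|$, I expect $\nu_{13}$ to be, up to a bounded rearrangement, an initial subsegment of $\nu_{12}$: the walls used by $\nu_{13}$ are a subset of (or closely related to) those used by $\nu_{12}$, because a wall separating $x_1$ from the far point along $\lambda_1$ that is crossed over by $\lambda_3$ on the relevant side is essentially crossed over by $\lambda_2$ as well, once $m(a,b)=\infty$ forces the relevant walls to be the $a$- and $b$-walls. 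This is the step I expect to be the main obstacle: carefully justifying that the $a$/$b$-portions of $\nu_{12}$ and $\nu_{13}$ nest, and bounding the discrepancy caused by the at most $|S|-2$ edges with other labels and by the rearrangements, so that $\nu_{12}$ and $\nu_{13}$ share a common prefix (in the wall sense) of length at least $|\nu_{13}| - c|S|$ for a small constant $c$.

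Granting that, the endgame is a triangle-inequality bookkeeping argument in $\Lambda(W,S)$. Let $y_1$ be the endpoint of $\nu_{13}$ (a vertex on $\lambda_1$, at distance $|\nu_{13}|$ back from $x_1$). The $\lambda_1$-$\lambda_3$ diamond at $x_1$ gives a path from $x_3$ to $y_1$ of length at most $|\nu_{13}| + |S|$ or so (down the diamond along $\nu_{13}$ and across the product-square side, whose width is bounded by $|S|$ since the crossing walls have distinct labels). The $\lambda_1$-$\lambda_2$ diamond gives a path from $x_2$ to $x_1$ down $\nu_{12}$; since $\nu_{12}$ shares a prefix with $\nu_{13}$ up to bounded error, I get a path from $x_2$ to $y_1$ of length at most $(|\nu_{12}| - |\nu_{13}|) + |S| + c|S|$, traversing the shared prefix in reverse from the $x_2$ side down to (near) $y_1$ and absorbing the bounded cross-width. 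Combining, $d(x_2, x_3) \le d(x_2, y_1) + d(y_1, x_3) \le (|\nu_{12}| - |\nu_{13}|) + (\text{bounded}) + (|\nu_{12}| - |\nu_{13}|) + (\text{bounded})$; tracking the constants carefully (the cross-widths are each at most $|S|$, and the rearrangement slack is at most $|S|$ on each side) yields $d(x_2,x_3) < 2(|\nu_{12}| - |\nu_{13}|) + 4|S|$, as claimed. The hypothesis $|\nu_{13}| \ge 2|S|$ is what guarantees the $a$/$b$-portion is genuinely present and dominant so that the nesting in the middle step is not vacuous.
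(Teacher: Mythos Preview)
Your outline has the right target---showing that $\nu_{12}$ and $\nu_{13}$ (and, separately, $\nu^{12}$ and $\nu^{13}$) share all but boundedly many walls---but the route you sketch does not reach it, and the final bookkeeping is incorrect.

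The first step is a red herring. There is no reason the down path $\nu_{12}$ should consist mostly of $a$- and $b$-edges: letters of $\nu_{12}$ other than $a,b$ need not commute with $\{a,b\}$, so your ``would force a large product'' argument does not apply to them. The actual role of the hypothesis $\{a,b\}\subset lett(\nu_{12})\cap lett(\nu_{13})$, $m(a,b)=\infty$, is to constrain the \emph{up} paths: by Lemma~\ref{diamond} the sets $lett(\nu^{12})$ and $lett(\nu^{13})$ are disjoint from and commute with $\{a,b\}$. This is what makes the no-$(\Integ_2*\Integ_2)^3$ hypothesis bite: if an unrelated pair of walls from $\nu^{13}$ crossed an unrelated pair from $\nu^{12}$, those four labels together with $\{a,b\}$ would produce a visual $(\Integ_2*\Integ_2)^3$. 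Since $\nu^{12}$ and $\nu^{13}$ are both initial segments of rearrangements of $\alpha_2$ (the second half of $\lambda_1$), one deduces that the unshared part of $\nu^{13}$ has length at most $|S|-1$, and that $\nu^{12},\nu^{13}$ share two walls with unrelated labels; the latter, by the symmetric argument, then forces the down paths $\nu_{12},\nu_{13}$ to share all but at most $|S|-1$ walls as well. Your Step~2 conclusion (nesting of down paths) is correct, but it is obtained \emph{through} the up paths, not from the observation in your Step~1.

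Your Step~3 fails even granting the nesting, because the ``cross-width'' of each diamond is $|\nu^{12}|=|\nu_{12}|$, not $|S|$. Concretely, since $d(a,y_1)=d(a,x_1)-|\nu_{13}|$ and $d(a,x_2)=d(a,x_1)$, one always has $d(x_2,y_1)\ge |\nu_{13}|\ge 2|S|$, which already contradicts your estimate $d(x_2,y_1)\le (|\nu_{12}|-|\nu_{13}|)+O(|S|)$ whenever $|\nu_{12}|-|\nu_{13}|$ is small. The correct endgame is not a triangle inequality through $y_1$ but a cancellation argument: take the concatenated path from $x_2$ through $x_1$ to $x_3$ of total length $2|\nu_{12}|+2|\nu_{13}|$ (with segments having the walls of $\nu^{12},\nu_{12},\nu_{13},\nu^{13}$), and delete the repeated walls coming from \emph{both} the down-path overlap and the up-path overlap. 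Each overlap contributes at least $|\nu_{13}|-(|S|-1)$ shared walls, so after deleting one gets length at most $2|\nu_{12}|-2|\nu_{13}|+4(|S|-1)$, which is the bound. Without the up-path analysis there is no second cancellation and the inequality cannot close.
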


\begin{center}
\includegraphics{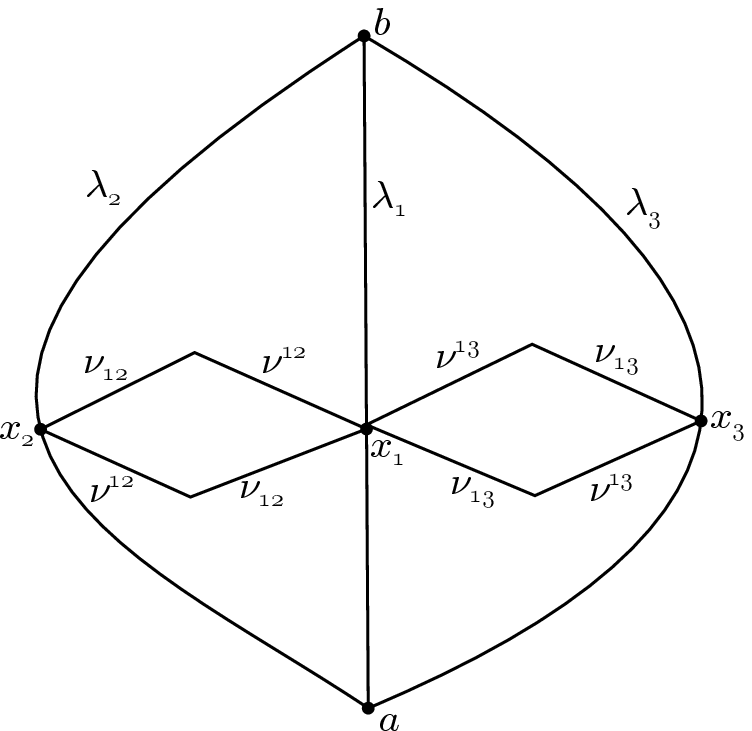}
\end{center}
\centerline{\textsc{Figure 2}}

\medskip

\begin{proof} To simplify notation we use the same label for two  paths with the same edge labeling.  
Let $\nu^{12}$ and $\nu^{13}$ be the up edge paths respectively of the diamonds at $x_1$ between $\lambda_1$ and $\lambda_2$ and between $\lambda_1$ and $\lambda_3$. Note that at $x_2$, $\nu^{12} \nu_{12} \nu_{13} \nu^{13}$ is a path from $x_2$ to $x_3$. By Lemma \ref{diamond},  $\{a,b\}$ is disjoint from and commutes with $lett(\nu^{12})\cup lett(\nu^{13})$. Thus, $\nu^{13}$ cannot have a pair of walls with unrelated labels cross a pair of walls with unrelated labels from $\nu^{12}$, since that would give a visual $(\Integ_2 * \Integ_2)^3$ in $W$. Rearrange  $\nu^{12}$ and $\nu^{13}$ so they have a longest common initial segment (see definition \ref{rearr}). As $\nu^{12}$ and $\nu^{13}$ are initial segments of a geodesic from $x_1$ to $b$, the walls of the unshared edges of $\nu^{13}$ cross those of $\nu^{12}$. In particular, the unshared part of $\nu^{13}$ has length $\leq {\abs S}-1$, and $\nu^{12}$ and $\nu^{13}$ share two walls with unrelated labels. By symmetry, this last part implies $\nu_{13}$ and $\nu_{12}$ at $x_1$ can be rearranged to have a shared initial segment so the unshared part of $\nu_{13}$ has length $\leq {\abs S}-1$. Deleting edges of the path $\nu^{12} \nu_{12} \nu_{13} \nu^{13}$ (from $x_2$ to $x_3$) corresponding to these shared walls leaves us with a geodesic from $x_2$ to $x_3$ of length less than $2(|\nu_{12}|-|\nu_{13}|) + 4{\abs S}$.
\end{proof}

\section{CAT(0) Spaces and Actions by Coxeter Groups}

\begin{defn}
A metric space $(X,d)$ is \textbf{proper} if each closed ball is compact.
\end{defn}

\begin{defn}
Let $(X,d)$ be a complete proper metric space. Given a geodesic triangle $\triangle abc$ in $X$, we consider a \textbf {comparison triangle} $\triangle \overline a\overline b \overline c$ in $\Real^2$ with the same side lengths. We say $X$ satisfies the \textbf{CAT(0) inequality} (and is thus a \textbf{CAT(0) space}) if, given any two points $p,q$ on a triangle $\triangle abc$ in $X$ and two corresponding points $\overline p,\overline q$ on a corresponding comparison triangle $\triangle \overline a\overline b\overline c$, we have 
\begin{center}
$d(p,q)\leq d(\overline p,\overline q)$.
\end{center}
 
\end{defn}

\begin{prop}
If $(X,d)$ is a CAT(0) space, then
\begin{enumerate}
\item the distance function $d:X\times X \rightarrow\Real$ is convex,
\item $X$ has unique geodesic segments between points, and
\item $X$ is contractible.
\end{enumerate}
\end{prop}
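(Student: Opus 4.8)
The plan is to prove the three items in the order (2), (1), (3), since uniqueness of geodesics is the quickest consequence of the comparison inequality and is needed to make the contraction in (3) well defined.

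For uniqueness (item 2), suppose $\gamma,\gamma'\colon[0,L]\to X$ are geodesics from $p$ to $q$ parametrized by arc length. I would apply the CAT(0) inequality to the geodesic triangle with vertices $p,q,q$ whose three sides are $\gamma$, $\gamma'$, and the constant path at $q$. Its comparison triangle in $\Real^2$ degenerates to the segment $[\bar p,\bar q]$; for each $t$, both $\gamma(t)$ and $\gamma'(t)$ have the same comparison point, namely the point of $[\bar p,\bar q]$ at distance $t$ from $\bar p$. Hence $d(\gamma(t),\gamma'(t))\le 0$ for all $t$, so $\gamma=\gamma'$.

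For convexity of the metric (item 1), let $c_1,c_2\colon[0,1]\to X$ be geodesics parametrized proportionally to arc length; the goal is to show $t\mapsto d(c_1(t),c_2(t))$ is convex, and the core step is the chord bound $d(c_1(t),c_2(t))\le (1-t)\,d(c_1(0),c_2(0))+t\,d(c_1(1),c_2(1))$. To get this I would introduce the (unique, by item (2)) geodesic $c_3$ from $c_1(0)$ to $c_2(1)$ and apply planar comparison twice, using the elementary Euclidean fact that if points divide two sides of a Euclidean triangle from the common vertex in ratio $r$, their distance is $r$ times the distance of the opposite vertices. Comparison on $\triangle(c_1(0),c_1(1),c_2(1))$ gives $d(c_1(t),c_3(t))\le t\,d(c_1(1),c_2(1))$, and comparison on $\triangle(c_1(0),c_2(0),c_2(1))$ (common vertex $c_2(1)$, ratio $1-t$) gives $d(c_3(t),c_2(t))\le(1-t)\,d(c_1(0),c_2(0))$; the triangle inequality in $X$ then yields the chord bound. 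Finally, applying the chord bound to restrictions of $c_1,c_2$ to subintervals — which are again affinely parametrized geodesics — gives midpoint convexity of $t\mapsto d(c_1(t),c_2(t))$, and continuity upgrades this to convexity; this is exactly convexity of $d\colon X\times X\to\Real$ in the usual sense.

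For contractibility (item 3), fix a basepoint $x_0$ and, using items (1) and (2), let $H(x,t)$ be the value at parameter $t$ of the unique affinely parametrized geodesic from $x_0$ to $x$. Continuity of $H$ on $X\times[0,1]$ follows from the chord bound (which gives $d(H(x,t),H(x',t))\le t\,d(x,x')$) together with $d(H(x,t),H(x,t'))=|t-t'|\,d(x_0,x)$. Then $H$ is a homotopy from the constant map at $x_0$ to $\mathrm{id}_X$, so $X$ is contractible. The only genuinely nontrivial step is the chord bound in item (1): one must set up the auxiliary geodesic $c_3$, invoke planar comparison twice with the correct division ratios, and then pass from the chord bound to true convexity via subdivision and continuity; items (2) and (3) are short once this is in hand.
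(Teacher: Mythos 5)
The paper states this proposition without proof, as standard CAT(0) background (implicitly deferring to Bridson--Haefliger). Your argument is correct and is essentially the standard textbook proof: degenerate comparison triangle for uniqueness, the two-triangle subdivision through the auxiliary geodesic $c_3$ to get the chord inequality and hence convexity, and geodesic retraction to the basepoint for contractibility, with joint continuity of the homotopy supplied by the chord bound in the $x$-variable and the affine parametrization in the $t$-variable. One tiny economy worth noting: once you have the chord bound $d(c_1(t),c_2(t))\le(1-t)d(c_1(0),c_2(0))+t\,d(c_1(1),c_2(1))$ for all affinely parametrized geodesics, applying it directly to restrictions to subintervals $[a,b]$ already gives full convexity of $t\mapsto d(c_1(t),c_2(t))$; the detour through midpoint convexity plus continuity is not needed, though of course it is also valid.
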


\begin{defn}
A \textbf{geodesic ray} in a CAT(0) space $X$ is an isometry $[0, \infty)\rightarrow X$.
\end{defn}

\begin{defn}
Let $(X,d)$ be a proper CAT(0) space. Two geodesic rays $c,c':[0,\infty)\rightarrow X$ are called \textbf{asymptotic} if for some constant $K$, $d(c(t),c'(t))\leq K$ for all $t\in[0,\infty)$. Clearly this is an equivalence relation on all geodesic rays in $X$, regardless of basepoint. We define the \textbf{boundary} of $X$ (denoted $\partial X$) to be the set of equivalence classes of geodesic rays in $X$. We denote the union $X\cup\partial X$ by $\overline X$.
\end{defn}

\noindent The next proposition guarantees that the topology we wish to put on the boundary is independent of our choice of basepoint in $X$.

\begin{prop}
Let $(X,d)$ be a proper CAT(0) space, and let $c:[0,\infty)\rightarrow X$ be a geodesic ray. For a given point $x\in X$, there is a unique geodesic ray based at $x$ which is asymptotic to $c$.
\end{prop}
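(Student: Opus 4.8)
The plan is to prove existence by a limiting argument that uses properness of $X$, and uniqueness by convexity of the metric (part (1) of the preceding Proposition).

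For existence, for each positive integer $n$ let $\sigma_n\colon[0,L_n]\to X$ be the geodesic from $x$ to $c(n)$ parametrized by arc length, where $L_n=d(x,c(n))$; note $|L_n-n|=|d(x,c(n))-d(c(0),c(n))|\le d(x,c(0))$, so $L_n\to\infty$. For fixed $T>0$ and all $n$ with $L_n\ge T$, the restrictions $\sigma_n|_{[0,T]}$ are $1$-Lipschitz maps into the closed ball $\overline B(x,T)$, which is compact since $X$ is proper. By Arzel\`a--Ascoli together with a diagonal argument over $T=1,2,3,\dots$, some subsequence $(\sigma_{n_k})$ converges uniformly on compact subsets of $[0,\infty)$ to a map $c'\colon[0,\infty)\to X$ with $c'(0)=x$, and passing to the limit in $d(\sigma_{n_k}(s),\sigma_{n_k}(t))=|s-t|$ shows $c'$ is a geodesic ray.

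To see that $c'$ is asymptotic to $c$, fix $s\ge 0$ and $n_k$ large. Reparametrize $\sigma_{n_k}$ and $c|_{[0,n_k]}$ affinely on $[0,1]$; by convexity of $d$, the distance between corresponding points is bounded by the relevant convex combination of the endpoint distances $d(x,c(0))$ and $d(c(n_k),c(n_k))=0$, hence by $d(x,c(0))$. The point of $\sigma_{n_k}$ corresponding to $c(s)$ is $\sigma_{n_k}(sL_{n_k}/n_k)$, which differs from $\sigma_{n_k}(s)$ by at most $s\,|L_{n_k}/n_k-1|\le s\,d(x,c(0))/n_k\to 0$. Combining these estimates and letting $k\to\infty$ gives $d(c'(s),c(s))\le d(x,c(0))$ for every $s$, so $c'$ is asymptotic to $c$.

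For uniqueness, suppose $c'$ and $c''$ are geodesic rays based at $x$, both asymptotic to $c$. Since being asymptotic is an equivalence relation, $c'$ and $c''$ are asymptotic to each other, so $f(t)=d(c'(t),c''(t))$ is bounded. Restricting both rays to $[0,T]$ and rescaling to $[0,1]$ shows $f$ is convex on each $[0,T]$, hence convex on $[0,\infty)$; a bounded convex function on $[0,\infty)$ is non-increasing, and since $f(0)=d(x,x)=0$ and $f\ge 0$, we conclude $f\equiv 0$, i.e. $c'=c''$. The main obstacle is the existence half: properness must be invoked carefully to run Arzel\`a--Ascoli, and the mismatch between the arc-length parametrizations of $\sigma_n$ and of $c|_{[0,n]}$ must be controlled so that convexity of $d$ can be applied cleanly; uniqueness is then immediate.
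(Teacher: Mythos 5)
Your proof is correct and complete. The paper itself gives no argument for this proposition -- it simply cites Bridson--Haefliger -- and your argument is exactly the standard one found there (Proposition II.8.2): existence via Arzel\`a--Ascoli applied to the geodesics $\sigma_n$ from $x$ to $c(n)$ (using properness of $X$), with the asymptotic bound $d(c'(s),c(s))\le d(x,c(0))$ obtained from convexity of the metric after an affine reparametrization, and uniqueness from the fact that a nonnegative bounded convex function on $[0,\infty)$ vanishing at $0$ is identically zero. The only minor point worth making explicit is that when you pass to the limit in $d(\sigma_{n_k}(s),\sigma_{n_k}(t))=|s-t|$ you should remark that the limit $c'$ really is defined on all of $[0,\infty)$ because $L_n\to\infty$; you noted this, so the argument is airtight.
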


\noindent For a proof of this (and more details on what follows), see \cite{BH}.

\bigskip

We wish to define a topology on $\overline X$ that induces the metric topology on $X$. Given a point in $\partial X$, we define a neighborhood basis for the point as follows: 

\noindent Pick a basepoint $x_0\in X$. Let $c$ be a geodesic ray starting at $x_0$, and let $\epsilon>0$, $r>0$. Let $S(x_0,r)$ denote the sphere of radius $r$ based at $x_0$, and let $p_r:X\rightarrow S(x_0,r)$ denote the projection onto $S(x_0,r)$. Define
\begin{center}
$U(c,r,\epsilon)=\{x\in\overline X :d(x,x_{0})>r,d(p_{r}(x),c(r))<\epsilon\}.$
\end{center}
This consists of all points in $\overline X $ whose projection onto $S(x_0,r)$ is within $\epsilon$ of the point of the sphere through which $c$ passes. These sets together with the metric balls in $X$ form a basis for the \textbf{cone topology}. The set $\partial X$ with this topology is sometimes called the \textbf{visual boundary}. For our purposes, we will just call it the boundary of $X$.

\begin{defn}
We say a finitely generated group $G$ \textbf{acts geometrically} on a proper geodesic metric space $X$ if there is an action of $G$ on $X$ such that:
\begin{enumerate}
\item Each element of $G$ acts by isometries on $X$,
\item The action of $G$ on $X$ is cocompact, and
\item The action is properly discontinuous. 
\end{enumerate}
\end{defn}

\begin{defn} 
We call a group $G$ a \textbf{CAT(0) group} if it acts geometrically on a CAT(0) space.
\end{defn}

The next theorem, due to Milnor \cite{Mil}, will be used in conjunction with the next two technical lemmas to identify geodesic rays in $X$ with certain rays in a right-angled Coxeter group which acts on $X$.

\begin{thm}
If a group $G$ with a finite generating set $S$ acts geometrically on a proper geodesic metric space $X$, then $G$ with the word metric with respect to $S$ is quasi-isometric to $X$ under the map $g\mapsto g\cdot x_0$, where $x_0$ is a fixed base point in $X$.
\end{thm}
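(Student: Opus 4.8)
\textbf{Proof proposal for the Milnor--\v{S}varc Theorem.}

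The plan is to verify that the orbit map $\phi\colon G\to X$, $g\mapsto g\cdot x_0$, is a quasi-isometry; since $G$ with the word metric $d_S$ and $X$ with its metric $d$ are both geodesic metric spaces, it suffices to exhibit constants $K\ge 1$, $C\ge 0$ with
\[
\tfrac{1}{K}\, d_S(g,h) - C \;\le\; d\bigl(g\cdot x_0,\, h\cdot x_0\bigr) \;\le\; K\, d_S(g,h) + C
\]
for all $g,h\in G$, together with the coarse surjectivity of $\phi$ (every point of $X$ lies within a bounded distance of the orbit $G\cdot x_0$). By $G$-equivariance of the metric on $X$, both $d_S(g,h)=d_S(1,g^{-1}h)$ and $d(g\cdot x_0,h\cdot x_0)=d(x_0,(g^{-1}h)\cdot x_0)$ depend only on $g^{-1}h$, so it is enough to prove the two inequalities with $h=1$, i.e. to compare $|g|_S$ with $d(x_0,g\cdot x_0)$ as $g$ ranges over $G$.

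First I would extract the geometric input from cocompactness and proper discontinuity. Cocompactness gives an $R>0$ so that the closed $R$-balls $\{B(g\cdot x_0,R) : g\in G\}$ cover $X$; this immediately yields coarse surjectivity of $\phi$ and also shows the orbit is $R$-dense, which is the step where I would replace an arbitrary geodesic in $X$ between $x_0$ and $g\cdot x_0$ by a chain of orbit points. Concretely, subdivide such a geodesic into segments of length $\le R$, choose an orbit point within $R$ of each division point, and obtain a sequence $1=g_0,g_1,\dots,g_n=g$ with $n\le d(x_0,g\cdot x_0)/R + 1$ and $d(g_{i}\cdot x_0, g_{i+1}\cdot x_0)\le 3R$ for each $i$. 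Now the key finiteness fact from proper discontinuity combined with properness: the set $F=\{h\in G : d(x_0, h\cdot x_0)\le 3R\}$ is finite, and it generates $G$ — this last point needs the action to be cocompact and $X$ connected, and is proved by the same chain argument applied to an arbitrary $g$. Enlarging $S$ to $S\cup F$ changes the word metric only by a multiplicative constant (since $S$ generates $G$ and is finite, each element of $F$ has bounded $S$-length, and vice versa each generator in $S$ has $(S\cup F)$-length $1$), so without loss of generality I may argue with $F$. Since $g_{i}^{-1}g_{i+1}\in F$, we get $|g|_{S\cup F}\le n \le d(x_0,g\cdot x_0)/R + 1$, which after readjusting constants gives the lower bound $|g|_S \le K\, d(x_0,g\cdot x_0) + C$, equivalently the left inequality above.

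For the upper bound $d(x_0, g\cdot x_0)\le K|g|_S + C$, write a geodesic word $g=s_1 s_2\cdots s_m$ in $S$ with $m=|g|_S$, set $\Delta=\max_{s\in S} d(x_0, s\cdot x_0)$ (finite as $S$ is finite), and use the triangle inequality along the orbit points $x_0, s_1\cdot x_0, s_1 s_2\cdot x_0,\dots$: by $G$-equivariance, $d(s_1\cdots s_{i}\cdot x_0,\, s_1\cdots s_{i+1}\cdot x_0)=d(x_0, s_{i+1}\cdot x_0)\le\Delta$, so summing gives $d(x_0,g\cdot x_0)\le m\Delta = \Delta|g|_S$. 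The main obstacle — and the only place real care is needed — is the lower bound: one must be sure that proper discontinuity plus properness genuinely forces $F$ to be finite (this is where completeness/properness of $X$ is used, ensuring closed balls are compact so only finitely many orbit points meet any given ball), and one must handle the passage between the generating set $S$ and the a priori different generating set $F$ without circularity, which is why I would phrase the argument so that $S$ is only assumed to generate $G$ and the quasi-isometry constants are allowed to absorb the comparison $d_S \asymp d_{S\cup F}$. Everything else is bookkeeping with the triangle inequality and equivariance.
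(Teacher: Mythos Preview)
Your argument is the standard Milnor--\v{S}varc proof and is correct. Note, however, that the paper does not actually prove this theorem: it is quoted as a result due to Milnor with a citation to \cite{Mil} (and the argument you give is essentially the one in \cite{BH}, which the paper also cites), so there is no ``paper's own proof'' to compare against.
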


Let $(W,S)$ be a right-angled Coxeter group acting geometrically on a CAT(0) space $X$. Pick a base point $*\in X$ and identify a copy of the Cayley graph for $(W,S)$ inside $X$ as in the previous theorem. If vertices $u,v$ of $\Lambda(W,S)$ are adjacent, then we connect $u*$ and $v*$ with a CAT(0) geodesic in $X$. This defines a map $C:\Lambda \rightarrow X$ respecting the action of $W$. If $\alpha$ is a $\Lambda$-geodesic, we call $C(\alpha)$ a $\Lambda$-geodesic in $X$.

\begin{defn}
Let $r:[a,b]\rightarrow X$ be a geodesic segment in $X$ with $r(a)=x$ and $r(b)=y$. For $\delta>0$, we say that a Cayley graph geodesic $\alpha$ \textbf{$\delta$-tracks} $r$ if every point of $C(\alpha)$ is within $\delta$ of a point of the image of $r$ and the endpoints of $r$ and $C(\alpha)$ are within $\delta$ of each other.
\end{defn}

Proofs of the next two lemmas can be found in section 4 of \cite{MRT}.

\begin{lemma}
\label{deltatrack}
There exists some $\delta_1>0$ such that for any geodesic ray $r:[0,\infty)\rightarrow X$ based at $x_0$, there is a geodesic ray $\alpha_r$ in $\Lambda(W,S)$ that $\delta_1$-tracks $r$.
\end{lemma}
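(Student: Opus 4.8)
The plan is to use Milnor's theorem (quasi-isometry of $\Lambda(W,S)$ with $X$ via $g\mapsto g\cdot x_0$) together with a standard Morse-lemma / stability-of-quasi-geodesics argument, adapted to the fact that we want a genuine Cayley-graph \emph{ray}, not just a quasi-geodesic ray. First I would fix the quasi-isometry constants: by Milnor's theorem there are $L\geq 1$, $C\geq 0$ so that $\frac1L d_S(u,v)-C\leq d_X(u\cdot x_0,v\cdot x_0)\leq L\,d_S(u,v)+C$ for all $u,v\in W$, and (enlarging $C$) every point of $X$ is within $C$ of some $w\cdot x_0$. The map $\mathcal C:\Lambda\to X$ linearly interpolating along CAT(0) geodesics between adjacent vertex-images is then a quasi-isometric embedding with uniform constants (each edge has $X$-length at most $L+C$, so interpolation changes distances by a bounded additive amount), and it is coarsely surjective.

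Next, given the geodesic ray $r:[0,\infty)\to X$ based at $x_0=\mathcal C(1)$, for each integer $n\geq 0$ pick a vertex $w_n\in W$ with $d_X(w_n\cdot x_0, r(n))\leq C$, taking $w_0=1$. Consecutive $w_n$ are a bounded $\Lambda$-distance apart: $d_X(w_n\cdot x_0,w_{n+1}\cdot x_0)\leq 2C+1$, hence $d_S(w_n,w_{n+1})\leq L(2C+1)+LC=:N$. Concatenating $\Lambda$-geodesics from $w_n$ to $w_{n+1}$ produces an $\Lambda$-edge path $\alpha$ from $1$ toward infinity. One checks directly from the quasi-isometry inequalities that the subpaths of $\alpha$ are $(L',C')$-quasi-geodesics in $\Lambda$ for uniform $L',C'$ depending only on $L,C,N$: for vertices at path-parameters $i<j$ on $\alpha$, the $\Lambda$-distance is comparable to $|i-j|$ because the corresponding points of $X$ lie boundedly close to $r$, which is a genuine geodesic in $X$, and $X$ is CAT(0) (so quasi-geodesics in $X$ are stable near the geodesic $r$). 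Passing to an $\Lambda$-geodesic $\alpha_r$ with the same endpoints as $\alpha$ on each finite segment, and then taking a limit (the rays $\alpha_r|_{[0,n]}$ all start at $1$ and $\Lambda$ is locally finite, so a subsequence converges to a geodesic ray), we obtain a $\Lambda$-geodesic ray $\alpha_r$ based at $1$.

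It remains to produce the uniform $\delta_1$ with $\mathcal C(\alpha_r)$ $\delta_1$-tracking $r$. Since $\alpha_r$ and $\alpha$ are both $(L',C')$-quasi-geodesics in $\Lambda$ between the same finite endpoints, the Morse lemma in the \emph{proper CAT(0)} space $X$ (applied to their images $\mathcal C(\alpha_r)$ and $\mathcal C(\alpha)$, which are $X$-quasi-geodesics with uniform constants) shows each lies in a uniform $D$-neighborhood of the other; and $\mathcal C(\alpha)$ lies in a uniform neighborhood of $r$ by construction (each $w_n\cdot x_0$ is within $C$ of $r(n)$ and the interpolation adds a bounded amount). Composing these bounds, and handling the endpoint condition via $d_X(\alpha_r(n)\cdot x_0, r(t_n))$ small for the matching parameter, gives $\delta_1$ depending only on $L,C$. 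The main obstacle is the upgrade from ``quasi-geodesic ray built from the $w_n$'' to ``honest $\Lambda$-geodesic ray that still tracks $r$'': one must know that replacing the concatenated path by a geodesic does not pull it away from $r$, which is exactly where stability of quasi-geodesics in the CAT(0) space $X$ (rather than in $\Lambda$, where one has no hyperbolicity) is essential.
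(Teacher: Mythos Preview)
The paper does not prove this lemma itself; it defers to \cite{MRT}, Section 4. So there is no in-paper argument to compare against, but your proposal has a real gap that must be pointed out.

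Your argument hinges on the ``Morse lemma in the proper CAT(0) space $X$'': you take the concatenated path $\alpha$ (which does stay close to $r$) and the straightened $\Lambda$-geodesic $\alpha_r$, map both to $X$ via $\mathcal C$, and assert that because they are $X$-quasi-geodesics with the same endpoints they must lie in uniform neighborhoods of one another. This is false in general CAT(0) spaces. Stability of quasi-geodesics is a hyperbolic phenomenon; in $\mathbb R^2$ the piecewise-linear path $(0,0)\to(n/2,n/2)\to(n,0)$ is a $(\sqrt 2,0)$-quasi-geodesic whose midpoint is at distance $n/2$ from the straight segment, and the groups in this paper explicitly contain $\mathbb Z^2$ flats. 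So nothing in your scheme prevents the $\Lambda$-geodesic $\alpha_r$ from swinging arbitrarily far from $r$ once you replace $\alpha$ by a geodesic. The earlier parenthetical ``$X$ is CAT(0) (so quasi-geodesics in $X$ are stable near the geodesic $r$)'' is the same error.

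The proof in \cite{MRT} cannot be a generic metric argument for exactly this reason; it must use the special combinatorics of the Coxeter system. The relevant structure is the wall/hyperplane machinery already recalled in Section 2: each generator gives a reflection whose fixed set in $X$ is a convex wall, a $\Lambda$-geodesic from $1$ to $g$ crosses precisely the walls separating $1$ from $g$ (Remark \ref{wallprops}(3),(4)), and the CAT(0) geodesic from $x_0$ to $g\cdot x_0$ crosses the same walls since halfspaces are convex. This lets one build the tracking $\Lambda$-geodesic $\alpha_r$ inductively, at each step choosing an edge that crosses a wall the CAT(0) geodesic is about to cross, so that closeness to $r$ is maintained along the construction rather than recovered after the fact by a (nonexistent) Morse lemma. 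If you want to repair your write-up, the place to insert genuine content is the passage from $\alpha$ to $\alpha_r$: you must argue, using the deletion condition or the wall structure, that straightening cannot move the path far from $r$.
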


\begin{lemma}
\label{sameseg}
There exist $c,d>0$ such that, for any infinite geodesic rays $r$ and $s$ and $X$ based at $x_0$ that are within $\epsilon$ of each other at distance $M$ from $x_0$, there are Cayley graph geodesic rays $\alpha$ and $\beta$ which ($c\epsilon+d$)-track $r$ and $s$ respectively, and which share a common initial segment of length $M-c\epsilon-d$.
\end{lemma}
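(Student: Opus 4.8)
The plan is to move everything into the Cayley graph $\Lambda(W,S)$, use convexity of the CAT(0) metric to upgrade ``close at distance $M$'' to ``uniformly close out to distance $M$'', and then run a combinatorial argument with walls that merges the two tracking rays along a long common initial segment.

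First I would normalize so that $x_0=C(v_0)$ for a vertex $v_0$ of $\Lambda$, and use Lemma \ref{deltatrack} to pick geodesic rays $\alpha_0,\beta_0$ in $\Lambda$ that $\delta_1$-track $r$ and $s$; since $\delta_1$-tracking pins their initial points near $x_0$, I may prepend bounded paths and assume $\alpha_0$ and $\beta_0$ start at $v_0$. Because $r$ and $s$ are unit-speed geodesic rays from $x_0$, the function $t\mapsto d_X(r(t),s(t))$ is convex, vanishes at $0$, and is less than $\epsilon$ at $t=M$, hence is less than $\epsilon$ on all of $[0,M]$. Feeding this through the $\delta_1$-tracking and the Milnor quasi-isometry $g\mapsto g\cdot x_0$ (each edge of $\Lambda$ having bounded $X$-length) yields constants $c_1,d_1$ depending only on $(W,S)$ and the action, together with an index $N$ with $C(\alpha_0(N))$ at $X$-distance roughly $M$ from $x_0$, such that for every $n\le N$ there is an $m$ within a bounded distance of $n$ with $d_\Lambda(\alpha_0(n),\beta_0(m))\le k:=c_1\epsilon+d_1$. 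Rephrased with walls (Remark \ref{wallprops}): a geodesic from $v_0$ to a vertex crosses exactly the walls separating $v_0$ from that vertex, so for each $n\le N$ the set of walls crossed by $\alpha_0$ in its first $n$ steps and the set crossed by $\beta_0$ in its first $m$ steps differ by at most $k$.

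The combinatorial heart is then: two geodesic rays of $\Lambda(W,S)$ out of $v_0$ whose initial-segment wall sets agree up to $k$ at every scale out to $N$ can be rearranged to share an initial segment of length at least $N-O(k)$. I would prove this by confining the at most $k$ ``discrepancy'' walls to the tail: letting $A$ be the set of edges of $\alpha_0|_{[0,N]}$ whose walls are also crossed by $\beta_0|_{[0,N]}$, Lemma \ref{shortback} gives a shortest path $\tau_A$ from $v_0$ meeting every wall of $A$ that extends to a geodesic to $\alpha_0(N)$; since a geodesic between these two vertices crosses only the walls of $\alpha_0|_{[0,N]}$, the path $\tau_A$ crosses the $\ge N-k$ common walls plus at most $k$ others, so $N-k\le|\tau_A|\le N$ and $\tau_A$ is almost entirely common walls. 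Doing the same for $\beta_0$ and then running the commutation/deletion calculus (Remark \ref{wallprops}(3), Lemma \ref{radel}) to order the shared crossings the same way for both, one extracts rearrangements of $\alpha_0|_{[0,N]}$ and $\beta_0|_{[0,N]}$ that coincide on an initial segment of length at least $N-O(k)$. Setting $\alpha$ to be this common segment followed by the tail of $\alpha_0$ — still a geodesic ray, since only the first $N$ edges were disturbed — and $\beta$ the common segment followed by the tail of $\beta_0$, one gets rays that $(c\epsilon+d)$-track $r$ and $s$ once $c$ and $d$ absorb $\delta_1$, $c_1$, $d_1$ and the reordering error, and share a common initial segment of the asserted length.

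The step I expect to be the main obstacle is precisely this last one. Geodesics in $\Lambda$ that fellow-travel need not share any initial segment — indeed this can fail even when their endpoints coincide — and a set of walls that is ``initial'' along one geodesic need not be initial along the other. The delicacy is to exploit fellow-traveling at every scale (which is exactly what convexity buys, and is genuinely stronger than closeness of the two endpoints) to force the bounded symmetric difference of the wall sets into the tails and to traverse the common block of walls in one order compatible with both rearrangements. Everything else — the tracking lemma, convexity, the Milnor estimate, and the bookkeeping of constants — is routine.
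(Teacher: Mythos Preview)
The paper does not actually prove this lemma; immediately before its statement it says ``Proofs of the next two lemmas can be found in section 4 of \cite{MRT}.'' So there is no in-paper argument to compare your proposal against.

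That said, your overall plan is the natural one and is almost certainly what \cite{MRT} does: $\delta_1$-track $r,s$ by Cayley rays $\alpha_0,\beta_0$ via Lemma~\ref{deltatrack}, use convexity of the CAT(0) distance to upgrade ``$\epsilon$-close at $M$'' to ``$\epsilon$-close on all of $[0,M]$'', push this through the Milnor quasi-isometry to get $k$-fellow-traveling of $\alpha_0,\beta_0$ in $\Lambda$ out to some $N$ comparable to $M$ with $k=c_1\epsilon+d_1$, and then splice using wall combinatorics. You also correctly flag the splice as the real content.

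The one place your sketch has a genuine gap is precisely there. Applying Lemma~\ref{shortback} to the set $A$ of common walls hands you \emph{some} shortest geodesic $\tau_A$ from $v_0$ crossing every wall of $A$ --- its endpoint is the median of $v_0,\alpha_0(N),\beta_0(N')$ in the cube-complex sense --- but an arbitrary geodesic to that median need not stay close to $\alpha_0$ along the way (already in $(\mathbb Z_2*\mathbb Z_2)^2$ two geodesics with common endpoints can diverge linearly in the middle). If $\tau_A$ wanders, your rearranged ray no longer $(c\epsilon+d)$-tracks $r$ and the conclusion fails. The fellow-traveling \emph{at every scale} that you carefully set up is exactly what rescues this, but it must enter the \emph{choice} of $\tau_A$, not just the final bookkeeping. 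One clean fix: take $\tau_A$ to cross the walls of $A$ in the order they appear along $\alpha_0$ (this is a legitimate geodesic ordering, since any non-crossing pair in $A$ is already forced into $\alpha_0$'s order). Then for each $j$ the vertex $\tau_A(j)$ differs from the appropriate $\alpha_0(j')$ (with $|j-j'|\le k$) only across walls in $W_\alpha\setminus W_\beta$, so $d_\Lambda(\tau_A(j),\alpha_0(j'))\le k$; the fellow-traveling hypothesis then also gives $d_\Lambda(\tau_A(j),\beta_0)\le 2k$. With this refinement both rearranged rays track as required, they share the initial segment $\tau_A$ of length $|A|\ge N-k$, and all constants absorb into $c,d$ as you claim.
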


\section{Local connectivity and filter construction}

\begin{defn} 
We say a CAT(0) group $G$ has \textbf{(non-)locally connected boundary} if for every CAT(0) space $X$ on which $G$ acts geometrically, $\partial X$ is (non-)locally connected.
\end{defn}

\begin{defn}
Let $(W,S)$ be a right-angled Coxeter system, and let $\Gamma$ be the presentation graph for $(W,S)$. A \textbf{virtual factor separator} for $(W,S)$ (or $\Gamma$) is a pair $(C,D)$ where $D\subset C\subset S$, $C$ separates vertices of $\Gamma$, $\langle C-D \rangle$ is finite and commutes with $\langle D \rangle$, and there exist $s,t\in S-D$  such that $m(s,t)=\infty$ and  $\{s,t\}$ commutes with $D$.
\end{defn}

In this section we prove the main theorem (\ref{MTh}). Part (1) of this result is clear. If the right angled Coxeter system $(W,S)$ does not visually split as a direct product $(\mathbb Z_2\ast\mathbb Z_2)\times A$ and has a virtual factor separator, then $W$ has non-locally connected boundary (see \cite{MRT}).
It remains to show local connectivity of the boundaries of CAT(0) spaces acted upon geometrically by one-ended right-angled Coxeter groups with no virtual factor separators. To do this, we pick two rays whose end points are ``close" in $\partial X$, and use Lemma \ref{sameseg} to find two tracking Cayley geodesics which share a long initial segment. We then construct a filter of geodesics (a way of ``filling in" the space) between the branches of these Cayley geodesics such that its limit set gives a small connected set in $\partial X$ containing our original rays.

In what follows, let $(W,S)$ be a right-angled, one-ended Coxeter system with no virtual factor separator and containing no visual subgroup isomorphic to $(\mathbb Z_2 \ast \mathbb Z_2)^3$. Set $N=\abs{S}$. 
We will show that if $W$ acts geometrically on a CAT(0) space $X$, then given $\epsilon > 0$, there exists $\delta$ such that if two points $x,y \in \partial X$ satisfy $d(x,y) < \delta$, then there is a connected set in $\partial X$ of diameter $\leq \epsilon$ containing $x$ and $y$.

\begin{rem}\label{infend}
The right-angled Coxeter group $W$ is one-ended iff $\Gamma(W,S)$ contains no complete separating subgraph (i.e., a subgraph whose vertices generate a finite group in $W$). For a proof of this, see \cite{MT}.
\end{rem}

\begin{rem} 
If $e$ is an edge in $\Lambda(W,S)$, we let $\overline e\in S$ denote the label of $e$. Recall that for $g\in W$, $B(g)$ is the set of $s\in S$ such that $gs$ is shorter than $g$, and that $\langle B(g) \rangle$ is finite (Lemma \ref{finiteback}). 
\end{rem}

\begin{rem}
If $\alpha$ is a geodesic in $\Lambda(W,S)$ from a vertex $a$ to another vertex $b$, then for any other geodesic $\gamma$ from $a$ to $b$, we have $B(\alpha)=B(\gamma)$. Since this set depends only on $a$ and $b$, we may use the notation $B(b\rightarrow a)$ to denote $B(\alpha)$, where it is more convenient to do so.
\end{rem}

We begin with an example that demonstrates one important idea behind our proof. Let $(W,S)$ be a right-angled Coxeter system where $W$ is one-ended and acts geometrically on a CAT(0) space $X$. Suppose that $(e_1, e_2, \dots, e_m, e_{m+1}, e_{m+2}, \dots)$ and $(e_1, e_2, \dots, e_m, d_{m+1}, d_{m+2}, \dots)$ are $\Lambda$-geodesics in $X$, based at a vertex $*$, that $(c + d)$-track two CAT(0) geodesics $r$ and $s$ in $X$ (as in Lemma \ref{sameseg}), and let $x_m$ denote the endpoint of $(e_1, \dots, e_m)$. Set $a_1=\overline e_{m+1}$ and $b_1=\overline d_{m+1}$. By the previous remarks, $B(x_m\rightarrow *)$ does not separate the presentation graph $\Gamma(W,S)$, and $a_1,b_1\notin B(x_m\rightarrow *)$. Let $a_1, t_1, \dots, t_k, b_1$ be the vertices of a path from $a_1$ to $b_1$ in $\Gamma(W,S)$ where each $t_i \notin B(x_m \rightarrow *)$. We can construct the following (labeled) planar diagram (Figure 3) that maps naturally into $\Lambda$ (respecting labels) and then to $X$:

\vspace{1mm}

\begin{center}
\includegraphics{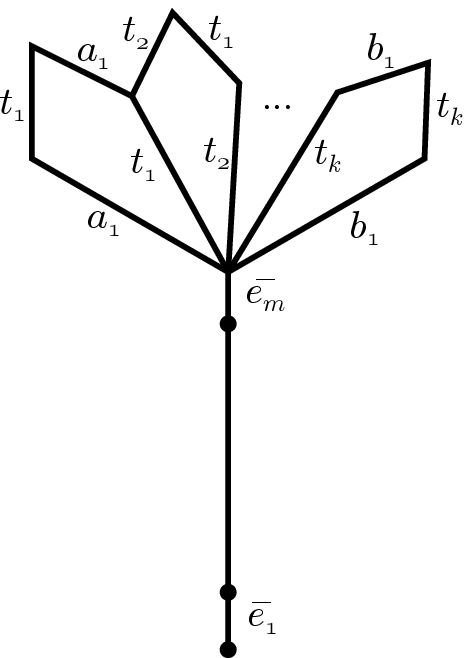}
\end{center}
\centerline{\textsc{Figure 3}}

\vspace{3mm}

As in \cite{MRT}, we call Figure 3 a \textbf{fan} for the geodesics $(e_1, \dots, e_m, e_{m+1})$ and $(e_1, \dots, e_m, d_{m+1})$. Each loop corresponds to the relation given by $t_i$ and $t_{i+1}$ commuting. Since each $t_i$ commutes with $t_{i+1}$ and $t_i, t_{i+1}\notin B(x_m \rightarrow *)$, the path $(e_1, \dots, e_m, t_i, t_{i+1})$ is geodesic for each $i$ (this is an easy consequence of Lemma \ref{radel}). Now, let $a_2=\overline e_{m+2}$, $b_2=\overline d_{m+2}$, and continue. We overlap our original fan with fans for the pairs of geodesics $(e_1, \dots, e_m, e_{m+1}, e_{m+2})$ and $(e_1, \dots, e_m, e_{m+1}, t_1)$, $(e_1, \dots, e_m, t_1, a_1)$ and $(e_1, \dots, e_m, t_1, t_2)$, and so on, ending with a fan for $(e_1, \dots, e_m, d_{m+1}, t_k)$ and $(e_1, \dots, e_m, d_{m+1}, d_{m+2})$. 

By continuing to build fans in this manner, we construct (Figure 4) a connected, one-ended, planar graph (with edge labels in $S$) called a \textbf{filter} for the geodesics $(e_1, e_2, \dots, e_m, e_{m+1}, e_{m+2}, \dots)$ and $(e_1, e_2, \dots, e_m, d_{m+1}, d_{m+2}, \dots)$. Note that if $v$ is a vertex of the filter, then the obvious edge paths in the filter from $*$ to $v$ define $\Lambda$-geodesics. The limit set determined by this filter in $\partial X$ is a connected set containing our original rays $r$ and $s$. However, this connected set may not be small. We refer to the image of a filter, in $\Lambda$ or in $X$, again as a filter. 
\vspace{1mm}

\begin{center}
\includegraphics{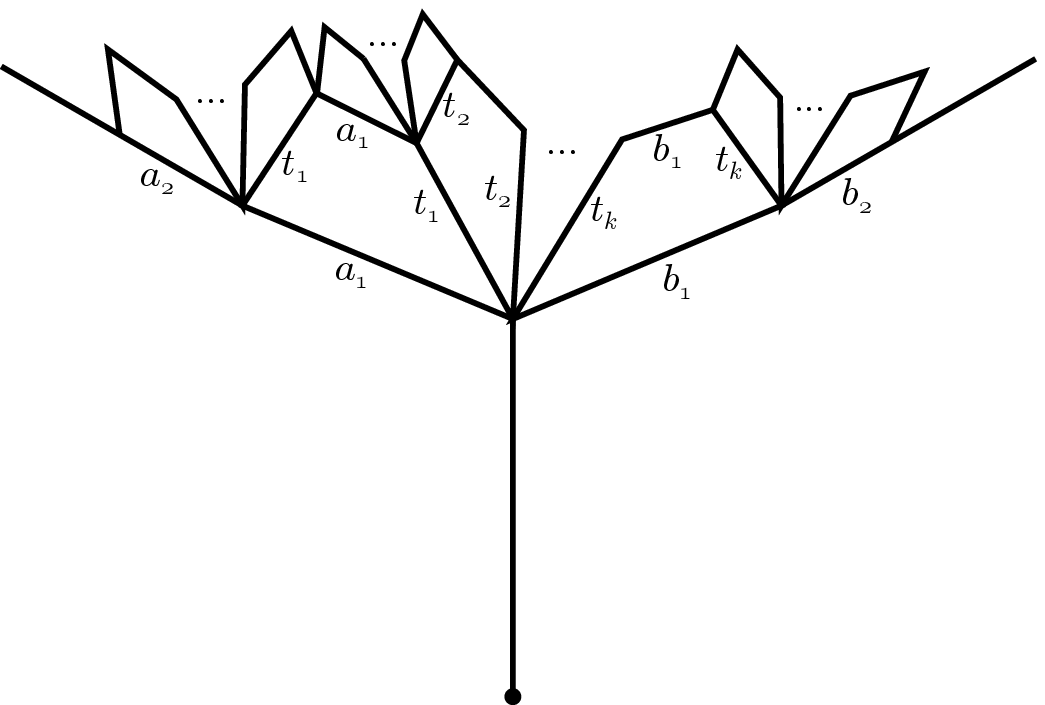}
\end{center}

\centerline{\textsc{Figure 4}}

\vspace{3mm}

If we wish for the limit set of our filter to be small in $\partial X$, we need to ensure that the CAT(0) geodesics between $*$ and points in our filter are not far from the base point $x_m$ of our filter. Using Lemma \ref{diamond}, we know what a wide bigon between two geodesics in $\Lambda$ must look like. Our first goal is to classify the ``down edge paths'', from  $x_m$ towards $\ast$, of any potential diamond given by a wide bigon in $\Lambda$, and show there are only two ``types" of such paths. 

\begin{rem}
For the rest of this section, we assume that $\Gamma$ has no virtual factor separators and $(W,S)$ contains no visual subgroup isomorphic to $(\Integ_2 \ast \Integ_2)^3$.
\end{rem}

\begin{defn}
Construct a geodesic from $x_m$ to $*$ in $\Lambda$ as follows: let $\alpha_1$ be a longest geodesic with edge labels in the finite group $\langle B(x_m \rightarrow *) \rangle$, and let $y_1$ be the endpoint of $\alpha_1$ based at $x_m$. Let $\alpha_2$ be a longest geodesic in the finite group $\langle B(y_1 \rightarrow *) \rangle$. Continuing in this way, we obtain a geodesic $(\alpha_1, \alpha_2, \dots, \alpha_r )$ from $x_m$ to $*$. We call this a \textbf{back combing} geodesic from $x_m$ to $*$. 
\end{defn}

\begin{remark}
\label{backprops}
We have the following properties of a back combing geodesic $(\alpha_1, \alpha_2, \dots, \alpha_r )$ from $x_m$ to $*$:

\begin{enumerate}
\item Every edge label of $\alpha_i$ commutes with every other edge label of $\alpha_i$.

\item No edge label of $\alpha_{i+1}$ commutes with every edge label of $\alpha_i$. 

\item Let $(\gamma_1, \gamma_2)$ be a $\Lambda$-geodesic from $x_m$ to $*$ and let $v$ be the endpoint of $\gamma_1$. If $(\beta_1, \beta_2, \dots, \beta_s)$ is a back combing geodesic from $x_m$ to $v$, then the set of walls of $\beta_i$ is a subset of the set of walls of $\alpha_i$, for $1\leq i \leq s$. In particular: 

\item Let $(\gamma_1, \gamma_2)$ be a $\Lambda$-geodesic from $x_m$ to $*$. If $\gamma_1$ has an edge in the same wall as an edge of $\alpha_j$ for some $1 \leq j \leq r$, then $\gamma_1$ contains an edge in the same wall as an edge of $\alpha_i$ for all $1\leq i \leq j$.

\item Let $(\gamma_1, \gamma_2)$ and $(\tau_1, \tau_2)$ be $\Lambda$-geodesics from $x_m$ to $*$. If each of $\tau_1$, $\gamma_1$, and $\alpha_j$ (for some $1\leq j \leq r$) has an edge of the wall $w(e)$, then for each $1\leq i \leq j$, each of $\alpha_i$, $\tau_1$, and $\gamma_1$ has an edge of the wall $w(e_i)$.

\end{enumerate}
\end{remark}

We will always assume that $x_m$ and $*$ are sufficiently far apart, so for now suppose $d(x_m, *) > 7N^2$. Let $\alpha_{7N+1}= (u_1, u_2, \dots, u_k)$ (note $k < N$), and for $1\leq i \leq k$, let $U_i$ be a shortest $\Lambda$-geodesic based at $x_m$ such that  last edge of $U_i$ is in the same wall as 
$u_i$ (so by Lemma \ref{shortback}, $U_i$ extends to a geodesic from $x_m$ to $*$). There may be several such geodesics, but they all have the same set of walls.

\begin{lemma}
\label{arrangeu}
If $(\gamma_1, \gamma_2)$ is a $\Lambda$-geodesic from $x_m$ to $*$ with $\abs{\gamma_1} \geq 7N^2$, then $\gamma_1$ can be rearranged to begin with exactly $U_i$, for some $1\leq i \leq k$.
\end{lemma}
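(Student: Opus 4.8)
The goal is to show that any $\Lambda$-geodesic $(\gamma_1,\gamma_2)$ from $x_m$ to $*$ with $|\gamma_1|\ge 7N^2$ can be rearranged so that its initial segment of length $|U_i|$ coincides with $U_i$ for some $i$. The idea is that the back combing geodesic $(\alpha_1,\dots,\alpha_r)$ is a kind of canonical ``fastest retreat'' towards $*$, and the layers $\alpha_1,\alpha_2,\dots$ are forced to appear (up to rearrangement) as initial segments of \emph{every} geodesic from $x_m$ to $*$, in order. I would first record, from Remark \ref{backprops}, that any geodesic $(\gamma_1,\gamma_2)$ from $x_m$ to $*$ whose $\gamma_1$ has length $\ge 7N^2$ must contain edges in walls of $\alpha_1,\alpha_2,\dots,\alpha_{7N}$ at least: since $|\alpha_i|<N$ for each $i$, the block $\alpha_1\cdots\alpha_{7N}$ has length $<7N^2$, so $\gamma_1$ (being longer) must ``reach past'' layer $7N$, and then by (4) of Remark \ref{backprops} $\gamma_1$ hits the walls of every $\alpha_i$ with $i\le 7N$. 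In particular $\gamma_1$ meets every wall of $\alpha_{7N+1}=(u_1,\dots,u_k)$? — no; more carefully, I want $\gamma_1$ to meet every wall appearing in $\alpha_{7N+1}$, which follows because $|\gamma_1|\ge 7N^2 > |\alpha_1\cdots\alpha_{7N+1}|$ (as $7N+1$ layers each of length $<N$ total less than $7N^2$ once $N\ge 1$; here one uses the slack in the bound $7N^2$), and again (4) of Remark \ref{backprops}.

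Next, knowing that $\gamma_1$ meets the wall $w(u_i)$ for each $1\le i\le k$, I would invoke Lemma \ref{shortback} and its proof: $U_i$ is a shortest path based at $x_m$ containing an edge in the wall $w(u_i)$, and it extends to a geodesic to $*$. The point is to show $U_i$ can be made an initial segment of (a rearrangement of) $\gamma_1$. For this I would apply Lemma \ref{diamond} (or rather its wall/rearrangement mechanics, exactly as in the proof of that lemma) to $\gamma_1$ and a geodesic that starts with $U_i$: the set of edges of $\gamma_1$ lying in walls met by $U_i$ can be pushed to the front of $\gamma_1$ by successive rearrangements, because each such wall crosses every wall strictly preceding its first occurrence in $\gamma_1$ (Remark \ref{wallprops}(3)). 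This produces a rearrangement of $\gamma_1$ beginning with a geodesic $U_i'$ having exactly the walls of $U_i$; since $U_i$ is a shortest such path, $|U_i'|=|U_i|$ and, up to relabeling which of the possibly-several shortest choices one fixes as $U_i$, the initial segment \emph{is} $U_i$. The subtle point to nail down is \emph{why one can choose the same $i$ for a clean initial segment rather than an interleaving of several $U_i$'s}: this is where the no-$(\mathbb Z_2\ast\mathbb Z_2)^3$ and no-virtual-factor-separator hypotheses enter, via an argument that the walls of $\alpha_{7N+1}$ that $\gamma_1$ must carry organize themselves so that one $U_i$ sits at the very front. I would argue that among $U_1,\dots,U_k$ there is one, say $U_i$, whose walls form a ``downward-closed'' set with respect to the back combing layering, i.e. every wall crossed before a wall of $U_i$ in the layering is itself a wall of some $U_j$ with $j$ forced earlier — and minimality of the $U$'s together with Lemma \ref{shortback} pins $U_i$ to the front.

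\textbf{Main obstacle.} The hard part is the last step: showing that \emph{some particular} $U_i$ — not just the union of walls of all the $U_j$'s — forms an honest rearranged initial segment of $\gamma_1$, and that the combinatorics of the back combing layers $\alpha_j$ don't force an unavoidable interleaving of two different $U_i$'s at the front. Concretely, one must rule out a configuration where $U_{i_1}$ and $U_{i_2}$ ``braid'': their walls alternate along every geodesic, with no way to commute one block entirely past the other. This is precisely the place to exploit the absence of $(\mathbb Z_2\ast\mathbb Z_2)^3$ (two independent braiding pairs of walls would build such a subgroup, much as in the proof of Lemma \ref{doublediamond}) and the no-virtual-factor-separator hypothesis (which prevents a $B(\cdot\rightarrow *)$-type separation that would let such braiding be independent of the rest of $\Gamma$). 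I expect the argument to mirror the wall-crossing bookkeeping in the proofs of Lemma \ref{diamond} and Lemma \ref{doublediamond}: rearrange $\gamma_1$ to bring the walls of $\alpha_{7N+1}$ as far forward as possible, observe that the resulting front block is a geodesic in $\langle B(\cdot)\rangle$-type letters whose wall set must, by the exclusions, be \emph{linearly ordered} into the $U_i$'s, and then take the first one.
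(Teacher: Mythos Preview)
Your plan misidentifies both what needs to be shown and where the difficulty lies. Two concrete problems:

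\emph{First}, the claim that $\gamma_1$ meets the wall $w(u_i)$ for \emph{every} $1\le i\le k$ does not follow from Remark~\ref{backprops}(4). That remark says only that if $\gamma_1$ hits a wall of $\alpha_j$ then it hits \emph{some} wall of each $\alpha_i$ with $i\le j$ --- one wall per layer, not all of them. Your length count shows $\gamma_1$ reaches layer $7N+1$, hence meets at least one wall of $\alpha_{7N+1}$; it does not show $\gamma_1$ meets them all, and in general it need not.

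\emph{Second}, and more importantly, the ``braiding'' obstacle you set up is a phantom, and the no-$(\mathbb Z_2\ast\mathbb Z_2)^3$ and no-virtual-factor-separator hypotheses play no role in this lemma. The paper's argument is short and purely combinatorial: write $\gamma_1=(t_1,\dots,t_s)$ and let $j$ be the \emph{smallest} index for which $w(t_j)$ equals $w(u_i)$ for some edge $u_i$ of $\alpha_{7N+1}$. This choice of $j$ determines $i$; there is nothing to disentangle. Now compare $(\gamma_1,\gamma_2)$ with a geodesic from $x_m$ to $*$ that begins with $U_i$ in a van Kampen diagram. If $\ell<j$ is maximal with $w(t_\ell)$ not a wall of $U_i$, then the band for $t_\ell$ must end beyond $U_i$ on the other side, while the bands for $t_{\ell+1},\dots,t_j$ all end on $U_i$; hence $w(t_\ell)$ crosses each of $w(t_{\ell+1}),\dots,w(t_j)$ and $t_\ell$ can be commuted past $t_j$. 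Iterating leaves an initial segment of $\gamma_1$ whose walls are exactly those of $U_i$ and whose last edge is in $w(u_i)$; by minimality of $U_i$ this segment is a rearrangement of $U_i$.

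So the step you flagged as the ``hard part'' evaporates once you pick $i$ via the first $\alpha_{7N+1}$-wall encountered along $\gamma_1$, rather than trying to control all the $U_i$ simultaneously.
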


\begin{center}
\includegraphics{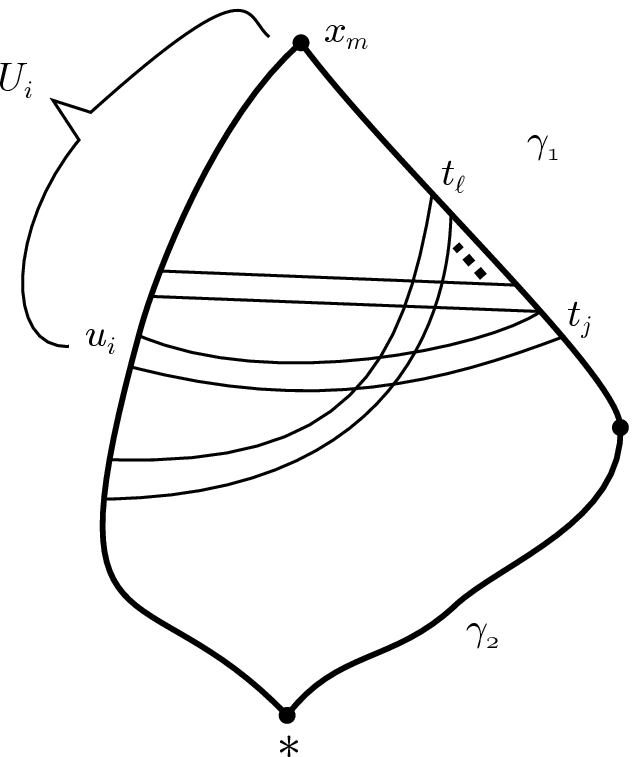}
\end{center}

\centerline{\textsc{Figure 5}}

\vspace{5mm}

\begin{proof}

Consider a van Kampen diagram  (Figure 5) for the geodesic bigon determined by $(\gamma_1, \gamma_2)$ and a $\Lambda$-geodesic from $x_m$ to $*$ that begins with $U_i$. Let $\gamma_1=(t_1, t_2, \dots, t_s)$, where $s\geq 7N^2$. Let $j$ be the smallest number such that the edge $t_j$ shares a wall with an edge $u_i$ of $\alpha_{7N+1}$, for some $1\leq i \leq k$ (such a $j$ exists from Remark \ref{backprops} (3) and because the lengths of $\alpha_1,\dots,\alpha_{7N}$ are each less than $N$). Now, choose $\ell$ maximal with $1 < \ell < j$ where the wall of $t_\ell$ is not on $U_i$. Clearly the wall of $t_\ell$ crosses the walls of $t_{\ell+1}, \dots, t_j$, so $\overline t_\ell$ commutes with $\overline t_{\ell+1}, \dots, \overline t_j$, and so $\overline t_1\cdots \overline t_j$ can be rewritten $\overline t_1  \cdots \overline t_{\ell-1} \overline t_{\ell+1} \cdots \overline t_j  \overline t_\ell $. Repeating this process, we obtain a rearrangement of $\gamma_1$ that begins with a rearrangement of $U_i$, which can be replaced by $U_i$.
\end{proof}

We now have a finite number $k < N$ of ``directions'', given by our $U_i$, in which a bigon can be wide at $x_m$. The next lemma (\ref{twodir}) is a fundamental combinatorial consequence of our no $(\Integ_2 * \Integ_2)^3$ hypothesis which allows us to refine this collection to at most two directions.

We will say that $U_i$ and $U_j$ \textbf{R-overlap} if there is an edge $a$ of $\alpha_{R}$ that shares a wall with an edge of $U_i$ and an edge of $U_j$. Let $\tau_a$ be a shortest $\Lambda$-geodesic based at $x_m$ that can be extended to a geodesic ending at $*$ and whose last edge is in the same wall as $a$. Then $U_i$ and $U_j$ can be rearranged to begin with $\tau_a$. We will now refine our list of $U_i$ through the following five step process which at each application either terminates the process, or removes at least one of the $U_i$ from our list and replaces all those that remain by geodesics with last edge in a wall of $\alpha_R$ where $R$ begins at $7N$ and is reduced by one at each successive application:

\begin{enumerate}
\item Choose $i$ minimal so that for some $j > i$, $U_i$ and $U_j$ $R$-overlap  (by sharing some wall with an edge $a$ of $\alpha_{R}$). If no such $i$ exists, our process stops.
\item Replace $U_i$ with a shortest geodesic based at $x_m$ and ending with an edge in the wall of $a$ (which can be extended to a geodesic to $*$ by Lemma \ref{shortback}), and redefine $u_i$ to be $a$.
\item Eliminate $U_j$ from the list of $U_\ell$.
\item For each remaining $U_\ell$ with $\ell \ne i$, choose an edge of $U_\ell$ in the same wall as an edge $b_\ell$ of $\alpha_{R}$, replace $U_\ell$ with a shortest geodesic based at $x_m$ and ending with an edge in the wall of $b_\ell$, and redefine $u_\ell$ to be $b_\ell$.
\item At this point each $U_\ell$ ends with an edge sharing a wall with an edge of $\alpha_{R}$. If two $U_\ell$ end with edges in the same wall, remove one of them from the list. Now, relabel the remaining $U_\ell$ to form a list $U_1, \dots, U_p$. Reduce $R$ to $R-1$.
\end{enumerate}

When this process stops, no two $U_i$ $R$-overlap, and each $u_i$ shares a wall with an edge of $\alpha_{R+1}$. Since $U_i$ is a shortest geodesic with last edge in the wall of $u_i$, every geodesic from $x_m$ to the end point of $U_i$ ends with $u_i$. By the minimality of $U_i$ and  Remark \ref {backprops} (3),  if $c$ is an edge of $U_i$ in a  wall of $\alpha_R$, then $\overline u_i$ and $\overline c$ do not commute. Note that when this process stops, $6N < R \leq 7N$.

\begin{lemma}\label{twodir}
At most two $U_i$ survive this reduction process.
\end{lemma}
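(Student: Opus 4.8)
The plan is to argue by contradiction: suppose three of the surviving geodesics, say $U_1, U_2, U_3$, remain after the reduction. By the construction, each $U_i$ ends with an edge $u_i$ sharing a wall with an edge of $\alpha_{R+1}$, no two of them $R$-overlap, and (by minimality together with Remark \ref{backprops}(3)) whenever $c$ is an edge of $U_i$ lying in a wall of $\alpha_R$, the generators $\overline{u}_i$ and $\overline{c}$ do not commute. The goal is to manufacture a visual $(\Integ_2\ast\Integ_2)^3$ subgroup, contradicting the standing hypothesis.

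First I would extract, for each $i\in\{1,2,3\}$, a pair of ``incompatible'' generators associated to the direction $U_i$. Since $U_i$ shares walls with edges of $\alpha_R$ (it extends to a geodesic to $*$ and is long enough to reach past $\alpha_R$, using that $R>6N$ and $|U_i|$ is controlled), pick an edge $c_i$ of $U_i$ in a wall of $\alpha_R$; then $m(\overline{u}_i,\overline{c}_i)=\infty$. The fact that $U_i$ and $U_j$ do not $R$-overlap means no edge of $\alpha_R$ shares a wall with an edge of both $U_i$ and $U_j$; combined with Remark \ref{backprops}(5) applied to the geodesics extending the $U_i$, this should force the walls coming from $U_1$, $U_2$, $U_3$ through $\alpha_R$ to be ``independent'' in a strong sense. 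The key combinatorial step is to show that, after rearranging a geodesic from $x_m$ to $*$ that passes through (rearrangements of) $U_1$, $U_2$, $U_3$ near their ends, one obtains three mutually commuting ``blocks,'' each block being an infinite-dihedral pair $\{\overline{u}_i,\overline{c}_i\}$ (or more precisely generators $s_i,t_i$ with $m(s_i,t_i)=\infty$), yielding the visual $(\Integ_2\ast\Integ_2)^3$.

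The main obstacle I anticipate is establishing the commutation \emph{between} the three blocks: non-$R$-overlapping only directly says the three pairs don't share a single wall of $\alpha_R$, but to get a visual $(\Integ_2\ast\Integ_2)^3$ I need each generator in block $i$ to commute with each generator in block $j$ for $i\neq j$. I would handle this by using Lemma \ref{diamond} (in the product-square form of the Remark) and Lemma \ref{doublediamond} to compare the diamonds at $x_m$ determined by the geodesics underlying $U_1, U_2, U_3$: two geodesics from $x_m$ to $*$ that begin with distinct $U_i$, $U_j$ produce a diamond whose down edge path records exactly the shared walls, and the non-overlap hypothesis forces this shared part to avoid $\alpha_R$-walls, so the $\alpha_R$-level walls of $U_i$ genuinely cross those of $U_j$ — i.e. the relevant generators commute. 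Running this for all three pairs simultaneously, and using the no $(\Integ_2\ast\Integ_2)^3$ hypothesis to rule out the configuration, completes the contradiction; the only remaining care is bookkeeping the rearrangements so that the three dihedral pairs sit in a single subset of $S$ generating the forbidden group, and checking the length bounds (all $U_i$ have length $<7N^2$ while $d(x_m,*)>7N^2$, and $R>6N$) guarantee the $\alpha_R$-edges used are genuinely present.

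Thus, assuming at least three $U_i$ survive leads to a visual $(\Integ_2\ast\Integ_2)^3$ in $W$, contrary to hypothesis, so at most two survive.
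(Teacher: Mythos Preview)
Your overall architecture matches the paper's: assume three $U_i$ survive, pick for each an edge $c_i$ (the paper calls it $a_i$) of $U_i$ lying in a wall of $\alpha_R$, note $m(\overline{u_i},\overline{c_i})=\infty$, and derive a visual $(\Integ_2\ast\Integ_2)^3$. You also correctly identify the crux as the cross-commutations between the three dihedral pairs.

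Where you diverge is in how you propose to obtain those commutations. You reach for Lemma~\ref{diamond} and Lemma~\ref{doublediamond}, but this is both unnecessary and, as written, incomplete: your diamond sketch only argues that ``the $\alpha_R$-level walls of $U_i$ cross those of $U_j$,'' which yields $m(\overline{c_i},\overline{c_j})=2$ --- the easiest of the relations --- and says nothing about $\overline{u_i}$ versus $\overline{u_j}$ or $\overline{u_i}$ versus $\overline{c_j}$. Lemma~\ref{doublediamond} is about bounding distances between points on three geodesics, not about producing commutation relations, so invoking it here does not obviously help.

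The paper's argument for the commutations is entirely elementary and avoids diamonds. Since each $\overline{c_i}\in lett(\alpha_R)$ and each $\overline{u_i}\in lett(\alpha_{R+1})$, Remark~\ref{backprops}(1) gives immediately that the $\overline{c_i}$ pairwise commute and the $\overline{u_i}$ pairwise commute. For the mixed relation $m(\overline{c_j},\overline{u_i})=2$ with $i\neq j$: non-$R$-overlap says $U_i$ contains no edge in $w(c_j)$, yet $U_i$ ends in $w(u_i)$; hence any geodesic extension of $U_i$ to $*$ crosses $w(u_i)$ before $w(c_j)$, while the back-combing $(\alpha_1,\alpha_2,\dots)$ crosses $w(c_j)$ (in $\alpha_R$) before $w(u_i)$ (in $\alpha_{R+1}$). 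Two geodesics between the same endpoints crossing a pair of walls in opposite orders forces those walls to cross, so $\overline{c_j}$ and $\overline{u_i}$ commute. Distinctness of the six generators follows from step~(5) of the reduction (the $u_i$ lie in distinct walls of $\alpha_{R+1}$), the non-overlap (the $c_i$ lie in distinct walls of $\alpha_R$), and Remark~\ref{backprops}(2) (no label of $\alpha_{R+1}$ lies in $lett(\alpha_R)$). That is the whole proof.

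So your plan is on the right track but the hard step is handled by a much lighter tool than you anticipated; replace the diamond machinery with the wall-ordering observation above.
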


\begin{proof}
Suppose none of $U_1$, $U_2$, and $U_3$ $R$-overlap. Let $a_1$, $a_2$, $a_3$ be edges of $U_1$, $U_2$, $U_3$ respectively such that each $a_i$ shares a wall with an edge of $\alpha_{R}$. Since the process terminated, $\overline a_1 , \overline a_2, \overline a_3$ are distinct and commute. But $a_i$ does not commute with $u_i$ for $i=1,2,3$, and the pairs $(a_i, u_i)$ all commute, so this gives a visual $(\Integ_2 \ast \Integ_2)^3$ in $(W,S)$, a contradiction.
\end{proof}

We now have at most two directions $U_1$ and $U_2$ remaining. If there is no $U_2$, then to simplify notation for now, define $U_2=U_1$.
 
If there is no geodesic extension of $\beta=(e_1, \dots, e_m)$ that can be rearranged to form a bigon of width $16N^2$ with the down edge path of the diamond at $x_m$ (Lemma \ref{diamond}) containing every wall of $U_2$, then we redefine $U_2=U_1$, and similarly for $U_1$. If no geodesic extension of $\beta$ can lead to a wide bigon in either direction, then an arbitrary filter (built as in the example in the beginning of this section) has ``small" connected set limit set in $\partial X$.
 
Note that $U_1$ and $U_2$ have length at least $6N$. Now, if $U_1$ and $U_2$ share two walls with unrelated labels, then let $(\alpha_1, \alpha_2, \dots)$ be a back combing from $x_m$ to the endpoint of $U_1$, and choose an edge $a$ in $\alpha_2$ so that $U_1$ and $U_2$ both have edges in the same wall as $a$ (such an edge exists by (5) of Remark \ref{backprops}). Let $U_1=U_2$ be a shortest geodesic at $x_m$  containing an edge in the same wall as $a$. 

\begin{remark}
\label{ucommute}
If $U_1\neq U_2$, then $U_1$ and $U_2$ share less than $N$ walls, and the sets $lett(U_1) - (lett(U_1)\cap lett(U_2))$ and $lett(U_2) - (lett(U_1)\cap lett(U_2))$ commute.
\end{remark}

For this next remark, note that $x_m$ is the $(m+1)^{st}$ vertex of $\beta$ (since $*$ is the first).

\begin{remark}
\label{notboth}
If $U_1\neq U_2$, $(\beta, \gamma)$ is a $\Lambda$-geodesic and $\gamma'$ is some rearrangement of $(\beta, \gamma)$ whose $(m+1)^{st}$ vertex is of distance at least $14N^2$ from $x_m$, then the down edge path $\tau$ at $x_m$ of the diamond (Lemma \ref{diamond}) for these two geodesics can be rearranged to begin with either $U_1$ or $U_2$, by Lemma \ref{arrangeu}. Both cannot initiate rearrangements of $\tau$, since otherwise there is a $(\Integ_2 * \Integ_2)^2$ in $lett(\tau)$, and the diamond at $x_m$ containing $\tau$ determines a $(\Integ_2 * \Integ_2)^3$ in $(W,S)$.
\end{remark}

Recall that $(e_1, e_2, \dots, e_m, e_{m+1}, e_{m+2}, \dots)$ and $(e_1, e_2, \dots, e_m, d_{m+1}, d_{m+2}, \dots)$ are geodesics in $\Lambda$ $(c + d)$-tracking two CAT(0) geodesics in $X$, and $x_m$ is the endpoint of $(e_1, \dots, e_m)$. Let $x_i$ denote the endpoint of $(e_1, \dots, e_i)$ where $i>m$, and $y_i$ denote the endpoint of $d_i$ where $i>m$. Set $U_1^{x_m}=U_1$ and construct $U_1^{x_i}$, $U_2^{x_i}$, $U_1^{y_i}$, $U_2^{y_i}$ exactly as above, by replacing $x_m$ with $x_i$ or $y_i$.  

Let $\lambda=(\ell_1, \ell_2, \dots, \ell_n)$ be a geodesic based at some $x_i$ extending $(\beta, e_{m+1}, \dots, e_i)$ (or based at $y_i$ and extending $(\beta, d_{m+1}, \dots, d_i)$), but not passing through $e_{i+1}$ ($d_{i+1}$). Our goal is to classify the directions back toward $*$ at the endpoint of $\lambda$ in a way that gives us some correspondence between our direction(s) at $x_i$ ($y_i$) and the direction(s) at the endpoint of $\lambda$. We'll do this inductively, by corresponding directions at the endpoint of each edge of $\lambda$ to the directions at the endpoint of the previous edge of $\lambda$. For what follows, let $v$ denote the endpoint of $\ell_1$. 

\begin{enumerate}

\item If $U_1^{x_i}=U_2^{x_i}$ and $\overline \ell_1$ commutes with $lett(U_1)$, then let $U_1^{x_i}(\ell_1)=U_2^{x_i}(\ell_1)$ be the edge path at $v$ with the same labeling as $U_1^{x_i}$. Note that if $\overline \ell_1 $ commutes with $lett(U_1)$, then $\overline \ell_1 \notin lett(U_1)$, since $(\ell_1^{-1},U_1) $ is geodesic.

\item If $U_1^{x_i}=U_2^{x_i}$ and $\overline \ell_1 $ does not commute with $lett(U_1^{x_i})$, then set $U_1^{x_i}((\ell_1))=U_2^{x_i}((\ell_1))=({\ell_1}^{-1},U_1^{x_i})$.

\item If $U_1^{x_i}\neq U_2^{x_i}$, we construct directions from $v$ back toward $*$ just as we've done from $x_m$ back toward $*$. If there is only one direction $V_1$, set $U_1^{x_i}(\ell_1)=U_2^{x_i}(\ell_1)=V_1$. If there are two directions $V_1$ and $V_2$, but there is no geodesic extension of $(\beta, e_{m+1}, \dots, e_i, \ell_1)$ that can lead to a $16N^2$ wide bigon in the $V_2$ direction at $v$, then set $U_1^{x_i}(\ell_1)=U_2^{x_i}(\ell_1)=V_1$ (and similarly for $V_1$). If there is no geodesic extension that can lead to a wide bigon in either direction, then building arbitrary fans, as in the example at the beginning of this section, fills in this section of the filter with rays in $X$ that are sufficiently close to our original two rays in $X$. Otherwise, take a geodesic extension $\gamma$ of $(\beta, e_{m+1}, \dots, e_i, \ell_1)$ so that a rearrangement of $(\beta, e_{m+1}, \dots, e_i, \ell_1, \gamma)$ gives a $16N^2$ wide bigon at $v$ whose down edge path of the diamond at $v$ (Lemma \ref{diamond}) begins with $V_1$. By Remark \ref{notboth}, the down edge path of the diamond at $x_i$ for this bigon can be rearranged to begin with either $U_1^{x_i}$ or $U_2^{x_i}$ (but not both). If it's $U_1^{x_i}$ set $U_1^{x_i}(\ell_1)=V_1$ and $U_2^{x_i}(\ell_1)=V_2$, else set $U_1^{x_i}(\ell_1)=V_2$ and $U_2^{x_i}(\ell_1)=V_1$. It will be made clear by Lemma \ref{sharedwalls} that this choice does not depend on the choice of $\gamma$.

\end{enumerate}

We now define $U_1^{x_i}((\ell_1, \ell_2))$ and $U_2^{x_i}((\ell_1, \ell_2))$ by replacing $U_1^{x_i}$ by $U_1^{x_i}(\ell_1)$ and $U_2^{x_i}$ by $U_2^{x_i}(\ell_1)$ in the above process, and continue repeating this process to define $U_1^{x_i}(\lambda)$ and $U_2^{x_i}(\lambda)$. Note that for any geodesic extension $(\lambda_1, \lambda_2)$ of $(\beta, e_{m+1}, \dots, e_i)$ that does not pass through $e_{i+1}$, if $U_1^{x_i}(\lambda_1)=U_2^{x_i}(\lambda_1)$, then $U_1^{x_i}((\lambda_1, \lambda_2))=U_2^{x_i}((\lambda_1, \lambda_2))$. 

\begin{remark}
From here on, when we mention a geodesic extension $\lambda$ of $(\beta, e_{m+1}, \dots, e_i)$ (or $(\beta, d_{m+1}, \dots, d_i))$, we assume $\lambda$ does not pass through $e_{i+1}$ ($d_{i+1}$).
\end{remark}

\begin{lemma}
\label{arrangeu2}
Let $\lambda$ be a geodesic extension of $(\beta, e_{m+1}, \dots, e_i)$ (or $(\beta, d_{m+1}, \dots, d_i))$ with $U_1^{x_i}(\lambda)\neq U_2^{x_i}(\lambda)$ ($U_1^{y_i}(\lambda)\neq U_2^{y_i}(\lambda)$), and let $(\gamma_1, \gamma_2)$ be any geodesic from the endpoint of $\lambda$ to $*$. If $\abs{\gamma_1} \geq 7N^2$, then $(\gamma_1, \gamma_2)$ can be rearranged to begin with either $U_1^{x_i}(\lambda)$ or $U_2^{x_i}(\lambda)$.
\end{lemma}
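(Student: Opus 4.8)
The plan is to reduce this to Lemma \ref{arrangeu} by an induction along the edges of $\lambda$, transporting the "wide bigon'' data at $x_i$ (or $y_i$) forward edge-by-edge along $\lambda$, exactly the way the paths $U_1^{x_i}(\lambda), U_2^{x_i}(\lambda)$ were constructed. First I would unwind the definition: the endpoint $v$ of $\lambda$ is the $(m+1+\abs\lambda)^{\text{st}}$ vertex along the geodesic $(\beta, e_{m+1}, \dots, e_i, \lambda)$ from $*$, and the hypothesis $U_1^{x_i}(\lambda)\neq U_2^{x_i}(\lambda)$ forces us (tracing back through cases (1)--(3) of the construction) into case (3) at the final edge of $\lambda$: that is, back at $v$ there genuinely are two distinct directions $V_1, V_2$ (equal to $U_1^{x_i}(\lambda)$ and $U_2^{x_i}(\lambda)$ up to order), each of which is a shortest geodesic at $v$ arising from the five-step reduction process, and each of which actually supports a $16N^2$-wide bigon in $\Lambda$. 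So $\{U_1^{x_i}(\lambda), U_2^{x_i}(\lambda)\}$ is precisely the pair $\{U_1, U_2\}$ one obtains by running the construction at the basepoint $v$ in place of $x_m$.

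Once that identification is made, the lemma is a direct application of Lemma \ref{arrangeu} with $v$ playing the role of $x_m$: if $(\gamma_1, \gamma_2)$ is a $\Lambda$-geodesic from $v$ to $*$ with $\abs{\gamma_1} \geq 7N^2$, then $\gamma_1$ can be rearranged to begin with exactly $U_i$ for some $i$ among the surviving directions at $v$ — and by Lemma \ref{twodir} there are at most two, namely $U_1^{x_i}(\lambda)$ and $U_2^{x_i}(\lambda)$. The only mild subtlety is that Lemma \ref{arrangeu} was stated with the standing hypothesis $d(x_m,*) > 7N^2$; here $d(v,*) = d(x_m,*) + i - m + \abs\lambda \geq d(x_m, *)$, so that hypothesis is preserved (and in fact we always assume $x_m$ and $*$ are sufficiently far apart). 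I would also note that the reduction process at $v$ requires $d(v,*)$ to exceed $7N^2$ so that $\alpha_{7N+1}, \dots$ at $v$ are defined; this is the same standing assumption.

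The step I expect to be the main obstacle is the bookkeeping in the first paragraph: verifying that the inductively-defined objects $U_1^{x_i}(\lambda), U_2^{x_i}(\lambda)$ really do coincide with the output of the reduction process applied at $v$, rather than being some artifact of the edge-by-edge transport. In cases (1) and (2) of the construction the transported path is a literal relabeling/extension of the previous one, so one must check that such a path is still a shortest geodesic at the new vertex terminating in the relevant wall — this is where Lemma \ref{shortback} and Remark \ref{backprops}(3) do the work, guaranteeing that extending by a commuting or non-commuting letter does not shorten the "back-combing distance'' past the wall in question. In case (3) the identification is essentially built in, but one must invoke Remark \ref{notboth} to know the correspondence $U_j^{x_i} \leftrightarrow V_j$ is well-defined (the forward reference to Lemma \ref{sharedwalls} for independence of the auxiliary geodesic $\gamma$ can be cited, or sidestepped since the present statement only asks for existence of \emph{some} rearrangement). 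Modulo that combinatorial verification, the proof is short.
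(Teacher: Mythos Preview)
Your proposal is correct and matches the paper's approach, whose proof is the single line ``This follows from the proof of Lemma \ref{arrangeu} and the construction of the $U_i^{x_i}(\lambda)$.'' One simplification: since equal directions stay equal under the inductive construction, the hypothesis $U_1^{x_i}(\lambda)\neq U_2^{x_i}(\lambda)$ forces case (3) at \emph{every} edge of $\lambda$, so the bookkeeping you anticipate for cases (1) and (2) is vacuous and the identification of $\{U_1^{x_i}(\lambda), U_2^{x_i}(\lambda)\}$ with the two surviving directions at $v$ is immediate.
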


\begin{proof} 
This follows from the proof of Lemma \ref{arrangeu} and the construction of the $U_i^{x_i}(\lambda)$.
\end{proof}

\begin{remark}
Remarks \ref{ucommute} and \ref{notboth} apply to $U_1^{x_i}(\lambda)$ and $U_2^{x_i}(\lambda)$, whenever they are not equal.
\end{remark}

\begin{lemma}
\label{sharedwalls}
Suppose $\lambda$ geodesically extends $(\beta, e_{m+1}, \dots, e_i)$, $e$ is an edge with $(\beta, e_{m+1}, \dots, e_i, \lambda, e)$ geodesic, and $U_1^{x_i}((\lambda, e))\neq U_2^{x_i}((\lambda, e))$, then $U_j^{x_i}((\lambda, e))$ and $U_j^{x_i}(\lambda)$ have at least $6N-3$ walls in common.
\end{lemma}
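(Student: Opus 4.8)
The plan is to compare back combing geodesics at the endpoint of $\lambda$ and at the endpoint of $(\lambda,e)$, and to show that adding the single edge $e$ can only perturb the list of ``directions'' $U_j$ in a controlled way. Write $v$ for the endpoint of $\lambda$ and $v'$ for the endpoint of $(\lambda,e)$, so $e$ is the edge from $v$ to $v'$. The construction of $U_j^{x_i}((\lambda,e))$ from $U_j^{x_i}(\lambda)$ runs through exactly one of cases (1), (2), (3) of the inductive step applied to the edge $e$. In case (1) the paths $U_j^{x_i}((\lambda,e))$ and $U_j^{x_i}(\lambda)$ literally have the same edge labeling, hence the same walls up to translation along $e$, and there is nothing to prove beyond bookkeeping; in case (2) they differ only by prepending $e^{-1}$, so all but one wall is shared. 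The substantive case is (3), where $U_1^{x_i}(\lambda)\neq U_2^{x_i}(\lambda)$ and the hypothesis $U_1^{x_i}((\lambda,e))\neq U_2^{x_i}((\lambda,e))$ tells us that both directions genuinely persist across $e$.

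First I would fix, in case (3), a geodesic extension $\gamma$ of $(\beta,e_{m+1},\dots,e_i,\lambda,e)$ witnessing a $16N^2$-wide bigon at $v'$ with down edge path beginning $V_1=U_1^{x_i}((\lambda,e))$, and apply Lemma \ref{arrangeu2} (which holds by the stated remark, since the $U_j^{x_i}(\lambda)$ are not equal) at $v$: the diamond of this same bigon at $v$ has a down edge path that can be rearranged to begin with $U_1^{x_i}(\lambda)$ — this is precisely how case (3) assigns the correspondence, and by Remark \ref{notboth} it cannot simultaneously begin with $U_2^{x_i}(\lambda)$, which is what makes the assignment (and hence the lemma's claim, once proved, independent of $\gamma$). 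Now run a van Kampen diagram argument as in the proof of Lemma \ref{diamond} for the bigon, but cut it at both $v$ and $v'$: the walls crossing the segment of the bigon boundary from $v$ back toward $*$ are, after the rearrangements, exactly the walls of $U_1^{x_i}(\lambda)$ together with the extra wall $w(e)$ (since $e$ sits between $v$ and $v'$ on one side and the bigon has height $\geq 16N^2 \geq 7N^2$ at $v'$, Lemma \ref{arrangeu2} applies there too). The set of walls of the down edge path at $v'$ that lie ``below'' $v$ therefore agrees with the set of walls of $U_1^{x_i}(\lambda)$ except for, at most, the wall $w(e)$ and the at most $N-1$ walls that can be rearranged across a product region near $v$ (the usual bound: any two geodesics from $v'$ to $*$ differ near the bottom by rearranging commuting letters, and there are fewer than $N$ of them). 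Since $U_1^{x_i}((\lambda,e))$ has length at least $6N$ by the running assumption on the $U_j$, and its walls are determined by shortest-geodesic minimality (every geodesic from $v'$ to the endpoint of $U_1^{x_i}((\lambda,e))$ ends with the same last edge), at least $6N - 1 - (N-1) \geq 6N - 3$ of its walls must coincide with walls of $U_1^{x_i}(\lambda)$.

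The main obstacle I anticipate is making the phrase ``walls below $v$'' precise and controlling the slippage where the down edge path at $v'$ meets the region around $v$: a priori the rearrangement furnished by Lemma \ref{arrangeu2} at $v'$ need not factor through a rearrangement at $v$, so one has to argue that the walls of $U_1^{x_i}((\lambda,e))$ that are \emph{not} walls of $\alpha_R$-type shared data at $v$ number at most $N-1$ — this is where the finiteness of $\langle B(\cdot)\rangle$ and property (3) of Remark \ref{backprops} (back combings at $v$ versus at $v'$ have nested wall sets) do the work, since the back combing segments $\alpha_1,\dots,\alpha_{7N}$ each have length $<N$ and the direction $U_1$ by construction reaches into $\alpha_R$ with $6N < R \leq 7N$. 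Once that nesting is in hand, the count is the routine arithmetic above, and the independence of the assignment from $\gamma$ in case (3) of the construction follows immediately because any two valid choices of $\gamma$ produce down edge paths at $v$ sharing $\geq 6N-3 > 0$ walls with the common $U_1^{x_i}(\lambda)$, hence (by Remark \ref{notboth}) cannot be assigned to different $U_j^{x_i}(\lambda)$.
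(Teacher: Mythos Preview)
Your proposal has a genuine gap: the argument never invokes the no visual $(\mathbb Z_2\ast\mathbb Z_2)^3$ hypothesis, and that hypothesis is where the bound $6N-3$ actually comes from. Your counting step ``at most the wall $w(e)$ plus at most $N-1$ product-region walls'' is asserted but not proved; Remark~\ref{backprops}(3) and finiteness of $\langle B(\cdot)\rangle$ give you that back combings at $v$ and $v'$ have nested wall sets, but they do \emph{not} say that the two shortest geodesics $U_1^{x_i}(\lambda)$ and $U_1^{x_i}((\lambda,e))$ (which are shortest paths to \emph{different} target walls chosen by the reduction process) differ in only $N$ walls. There is no a priori reason the ``directions'' at consecutive vertices should be close without using the ambient $(\mathbb Z_2\ast\mathbb Z_2)^3$ constraint. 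Incidentally, the displayed arithmetic $6N-1-(N-1)\geq 6N-3$ is also wrong: the left side is $5N$, which is smaller than $6N-3$ once $N>3$.

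The paper's argument proceeds differently. One first observes (as you do) that both $U_1$ and $U_1(\ell_1)$ have all their walls on the single down edge path $\tau$ of the diamond at $v'$ for the witnessing wide bigon. Then, using a back combing $(\alpha_1,\alpha_2,\dots)$ from $x_m$ to $*$, one picks an edge $a_1$ of $\alpha_{6N-1}$ with wall on $U_1(\ell_1)$ and an edge $a_2$ of $\alpha_{6N-2}$ not commuting with $\overline{a}_1$ (hence also on $U_1(\ell_1)$); similarly one picks $b_1$ in $\alpha_{6N-2}$ with wall on $U_1$ and $b_2$ in $\alpha_{6N-3}$ not commuting with $\overline{b}_1$. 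If neither $b_1$ nor $b_2$ had its wall on $U_1(\ell_1)$, then the unrelated pair $\{\overline{a}_1,\overline{a}_2\}$ would commute with the unrelated pair $\{\overline{b}_1,\overline{b}_2\}$, and the \emph{up} edge path of the diamond supplies a third unrelated pair commuting with both --- a visual $(\mathbb Z_2\ast\mathbb Z_2)^3$, contradiction. Hence the wall of $b_2$ lies on $U_1(\ell_1)$, and Remark~\ref{backprops}(4) then forces $U_1(\ell_1)$ to contain a wall from every $\alpha_i$ with $i\leq 6N-3$, all of which are walls of $U_1$. That is the source of the number $6N-3$, and the up edge path of the diamond is the missing ingredient in your sketch.
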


\begin{proof}

It suffices to show this for $U_1$ $(=U_1^{x_m})$ and $U_1(\ell_1)$ $(=U_1^{x_m}(\ell_1))$, as in the first step of our $U_i(\lambda)$ construction. Let $\gamma$ be the geodesic extension of $(\beta, \ell_1)$ used in the construction of the $U_i(\ell_1)$, so that there is a rearrangement $\gamma'$ of $(\beta, \ell_1, \gamma)$ whose $(m+2)^{nd}$ vertex is at least $16N^2$ from the endpoint of $(\beta, \ell_1)$. Let $\tau$ be the down edge path at the endpoint of $\ell_1$ for the diamond for these two geodesics, as in Lemma \ref{diamond}. Note $\abs{\tau} \geq 8N^2$. By Lemma \ref{arrangeu2} (and without loss of generality), $\tau$ can be rearranged to begin with $U_1(\ell_1)$. However, if $\tau$ has an edge in the same wall as $\ell_1$ then $\tau$ can be rearranged to begin with $\ell_1$, and so $(\ell_1, U_1)$. Otherwise, $\tau$ can be rearranged to begin with $U_1$, so either way every edge of $U_1$ shares a wall with an edge of $\tau$. Let $(\alpha_1, \dots, \alpha_{6N}, \dots)$ be a back combing from $x_m$ to $*$, choose an edge $a_1$ of $\alpha_{6N-1}$ that shares a wall with an edge of $U_1(\ell_1)$, and pick an edge $a_2$ of $\alpha_{6N-2}$ whose label does not commute with $\overline a_1$ (so $a_2$ also shares a wall with an edge of $U_1(\ell_1)$). Pick an edge $b_1$ of $\alpha_{6N-2}$ that shares a wall with an edge of $U_1$, and pick an edge $b_2$ of $\alpha_{6N-3}$ whose label doesn't commute with $\overline b_1$. If neither $b_1$ nor $b_2$ have their walls on $U_1(\ell_1)$, then the pair $\overline a_1, \overline a_2 $ commutes with the pair $\overline b_1, \overline b_2$, and the up edge path at $x_m$ for this diamond gives a third pair of unrelated elements that commute with the pairs $\overline a_1$, $\overline a_2$ and $\overline b_1$, $\overline b_2$, which is a contradiction. Thus the wall of $b_2$ must cross $U_1(\ell_1)$, and so $U_1(\ell_1)$ and $U_1$ have at least $6N-3$ walls in common.
\end{proof}

We claimed in the construction of the $U_j^{x_i}(\lambda)$ that Lemma \ref{sharedwalls} shows the association between $U_j^{x_i}$ and $U_j^{x_i}(\ell_1)$ is independent of the choice of $\gamma$. If the association depended on the choice of $\gamma$, then by the above proof, $U_1^{x_i}(\ell_1)$ would have $6N-3$ walls in common with both $U_1^{x_i}$ and $U_2^{x_i}$. By Remark \ref{ucommute}, $lett(U_1^{x_i}(\ell_1))$ must then contain a $(\Integ_2 * \Integ_2)^2$, meaning the walls of $U_1^{x_i}(\ell_1)$ cannot all appear on the down edge path at $x_m$ of the diamond for a wide bigon, which is a contradiction.

This next lemma gives an important correspondence between the directions $U_j^{x_i}(\lambda_1)$ and $U_j^{x_i}((\lambda_1, \lambda_2))$.

\begin{lemma}
\label{nochange}
Let $(\lambda_1, \lambda_2, \lambda_3)$ be a geodesic extending $(\beta, e_{m+1}, \dots, e_i)$ (not passing through $x_{i+1}$) with endpoint $v$, let $\tau$ be another $\Lambda$-geodesic from $*$ and $v$, let $z_J$ and $z_M$ denote the endpoints of $\lambda_1$ and $\lambda_2$, respectively, and suppose $U_1^{x_i}((\lambda_1, \lambda_2))\neq U_2^{x_i}((\lambda_1, \lambda_2))$. Suppose $R\geq 14N^2$ and every vertex $z_J, z_{J+1}, \dots, z_M$ of $\lambda_2$ is of $\Lambda$-distance at least $R$ from $\tau$. If the down edge path of the diamond at $z_J$ for $\tau$ and $(\beta, e_{m+1}, \dots, e_i, \lambda_1, \lambda_2, \lambda_3)$ can be rearranged to begin with $U_1^{x_i}(\lambda_1)$, then the down edge path of the diamond at $z_M$ for these geodesics can be rearranged to begin with $U_1^{x_i}((\lambda_1, \lambda_2))$ (and similarly for $U_2$).
\end{lemma}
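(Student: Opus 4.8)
The plan is to establish the conclusion by iterating a single‑edge version of the statement along the edges of $\lambda_2$. Write $\mu$ for the geodesic $(\beta,e_{m+1},\dots,e_i,\lambda_1,\lambda_2,\lambda_3)$; for a vertex $z$ on $\mu$ let $\nu_z$ denote the down edge path at $z$ of the diamond (Lemma \ref{diamond}) for $\mu$ and $\tau$, and let $\rho_z$ be the initial segment of $(\lambda_1,\lambda_2)$ ending at $z$. Since $U_1^{x_i}((\lambda_1,\lambda_2))\neq U_2^{x_i}((\lambda_1,\lambda_2))$, the observation recorded just after the construction of the $U_j^{x_i}(\lambda)$ (once the two directions agree they agree from then on) forces $U_1^{x_i}(\rho_z)\neq U_2^{x_i}(\rho_z)$ at every vertex $z$ of $\lambda_1$ and $\lambda_2$; in particular every edge of $\lambda_2$ is processed under case (3) of that construction.

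For the single‑edge step let $z$ and $z'$ be consecutive vertices of $\lambda_2$ joined by an edge $f$, put $\rho:=\rho_z$, so $\rho_{z'}=(\rho,f)$, and assume $\nu_z$ rearranges to begin with $U_1^{x_i}(\rho)$; we must show $\nu_{z'}$ rearranges to begin with $U_1^{x_i}((\rho,f))$. This is where the hypothesis $R\geq 14N^2$ enters. By the product‑square description of the diamond (the remark after Lemma \ref{diamond}), $(\nu_z,\delta)$ is a geodesic of length $2|\nu_z|$ from $z$ to the vertex of $\tau$ at distance $|z|$ from $*$, where $\delta$ is the up edge path at $z$; as that vertex lies on $\tau$ we get $2|\nu_z|\geq R$, so $|\nu_z|\geq 7N^2$, and likewise $|\nu_{z'}|\geq 7N^2$. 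Hence Lemma \ref{arrangeu2} applies and $\nu_z$ rearranges to begin with $U_1^{x_i}(\rho)$ or $U_2^{x_i}(\rho)$, and $\nu_{z'}$ with $U_1^{x_i}((\rho,f))$ or $U_2^{x_i}((\rho,f))$; and since the vertices of $\tau$ at distances $|z|$ and $|z'|$ from $*$ are at distance $\geq R\geq 14N^2$ from $z$ and $z'$ respectively, Remark \ref{notboth} shows that in each case the rearrangement begins with exactly one of the two directions.

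It remains to exclude the possibility that $\nu_{z'}$ begins with $U_2^{x_i}((\rho,f))$. Suppose it does. Comparing the two diamonds, the walls of $\nu_{z'}$ agree with those of $\nu_z$ except for at most two: crossing $f$ from $z$ to $z'$ the only wall that can be gained is $w(f)$ and the only one that can be lost is that of the edge of $\tau$ at distance $|z'|$ from $*$. Thus $\nu_{z'}$ contains every wall of $U_2^{x_i}((\rho,f))$ and all but at most two walls of $U_1^{x_i}(\rho)$, and running the back‑combing and up‑edge‑path argument of Lemma \ref{sharedwalls} (applicable to $(\mu,\tau)$ thanks to the above length estimates) forces $U_2^{x_i}((\rho,f))$ to have at least $6N-3$ walls in common with $U_1^{x_i}(\rho)$. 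But Lemma \ref{sharedwalls} itself gives that $U_2^{x_i}((\rho,f))$ has at least $6N-3$ walls in common with $U_2^{x_i}(\rho)$, while by Remark \ref{ucommute} $U_1^{x_i}(\rho)$ and $U_2^{x_i}(\rho)$ share fewer than $N$ walls and have commuting complementary letter sets. Exactly as in the paragraph following Lemma \ref{sharedwalls}, this forces $lett(U_2^{x_i}((\rho,f)))$ to contain a visual $(\Integ_2\ast\Integ_2)^2$; combined with a pair of non‑commuting generators drawn from the up edge path of the $(\mu,\tau)$‑diamond at $z'$ — which is disjoint from and commutes with $lett(\nu_{z'})\supseteq lett(U_2^{x_i}((\rho,f)))$ and, having length $|\nu_{z'}|\geq 7N^2>N$, contains such a pair by Lemma \ref{radel} — this produces a visual $(\Integ_2\ast\Integ_2)^3$ in $(W,S)$, contrary to our standing hypothesis. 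Hence $\nu_{z'}$ rearranges to begin with $U_1^{x_i}((\rho,f))$.

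Iterating this step along the successive edges of $\lambda_2$ (at each step the two hypotheses — every vertex at distance $\geq R$ from $\tau$, and the two $U$‑directions distinct at the relevant prefix — are inherited) carries the conclusion from $z_J$ to $z_M$, so $\nu_{z_M}$ rearranges to begin with $U_1^{x_i}((\lambda_1,\lambda_2))$; the assertion for $U_2$ is identical. I expect the real work to lie in the second and third paragraphs: in checking that the threshold $R\geq 14N^2$ really does make the bigon $(\mu,\tau)$ ``wide enough'' at the consecutive vertices $z$ and $z'$ for Lemma \ref{arrangeu2}, Remark \ref{notboth}, and the wall count of Lemma \ref{sharedwalls} to be invoked (the construction of the $U_j^{x_i}(\lambda)$ is phrased with $16N^2$‑wide bigons, so one must verify that the width $\tfrac12 R=7N^2$ suffices for precisely the steps used), and in tracking carefully how the down edge path — hence which of the two $U$‑directions it begins with — changes as the splitting vertex of the diamond is moved one edge along $\mu$.
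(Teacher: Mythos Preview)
Your overall architecture matches the paper's exactly: reduce to a single-edge step along $\lambda_2$, locate consecutive vertices $z,z'$ where the down edge path switches from beginning with $U_1$ to beginning with $U_2$, and derive a visual $(\Integ_2\ast\Integ_2)^3$. Your first two paragraphs (persistence of $U_1\neq U_2$ along $\lambda_2$, the length estimate $|\nu_z|\geq 7N^2$ from $R\geq 14N^2$, and the appeal to Lemma~\ref{arrangeu2} and Remark~\ref{notboth}) are all correct and are precisely what the paper uses.

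The gap is in your third paragraph, at the claim that ``running the back-combing and up-edge-path argument of Lemma~\ref{sharedwalls}\dots forces $U_2^{x_i}((\rho,f))$ to have at least $6N-3$ walls in common with $U_1^{x_i}(\rho)$.'' Lemma~\ref{sharedwalls}'s argument compares $U_j$ at one vertex with $U_j$ (\emph{same} index) at an adjacent vertex, and uses that both initiate rearrangements of the \emph{same} down edge path; the back-combing then pins walls of each to specific levels $\alpha_{6N-1},\alpha_{6N-2},\alpha_{6N-3}$. Here you are comparing $U_2$ at $z'$ with $U_1$ at $z$, and you only know that $\nu_{z'}$ \emph{contains} most walls of $U_1^{x_i}(\rho)$, not that it rearranges to begin with it. That is not enough input to run the Lemma~\ref{sharedwalls} mechanism, so this step is unjustified as written.

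The paper's route is shorter and sidesteps this: apply Lemma~\ref{sharedwalls} with the \emph{same} index to get that $U_1^{x_i}(\rho)$ and $U_1^{x_i}((\rho,f))$ share at least $6N-3$ walls; then Remark~\ref{ucommute} at $z'$ (together with $|U_1^{x_i}((\rho,f))\cap U_2^{x_i}((\rho,f))|<N$) produces an unrelated pair $a_1,b_1\in lett(U_1^{x_i}(\rho))$ commuting with an unrelated pair $a_2,b_2\in lett(U_2^{x_i}((\rho,f)))$. For the third pair, the paper observes (as you do) that the \emph{up} edge paths at $z$ and $z'$ differ by at most two walls, so they share an unrelated pair $a_3,b_3$; since $a_1,b_1\in lett(\nu_z)$ and $a_2,b_2\in lett(\nu_{z'})$, both pairs commute with $a_3,b_3$. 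This gives the $(\Integ_2\ast\Integ_2)^3$ directly, without ever needing your claim~(a). If you replace your third paragraph by this argument, your proof becomes the paper's.
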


\begin{proof}
It suffices to show this for $U_1^{x_m}((\lambda_1, \lambda_2))=U_1((\lambda_1, \lambda_2))$ when $(\lambda_1, \lambda_2, \lambda_3)$ is a geodesic based at $x_m$, since the constructions are identical for each $x_i$. Let $\gamma_J$ and $\gamma_M$ be the down edge paths at $z_J$ and $z_M$ respectively of the diamonds for $(\beta, \lambda_1, \lambda_2, \lambda_3)$ and $\tau$, as given by Lemma \ref{diamond}. (See Figure 6.)

\vspace{2mm}

\begin{center}
\includegraphics{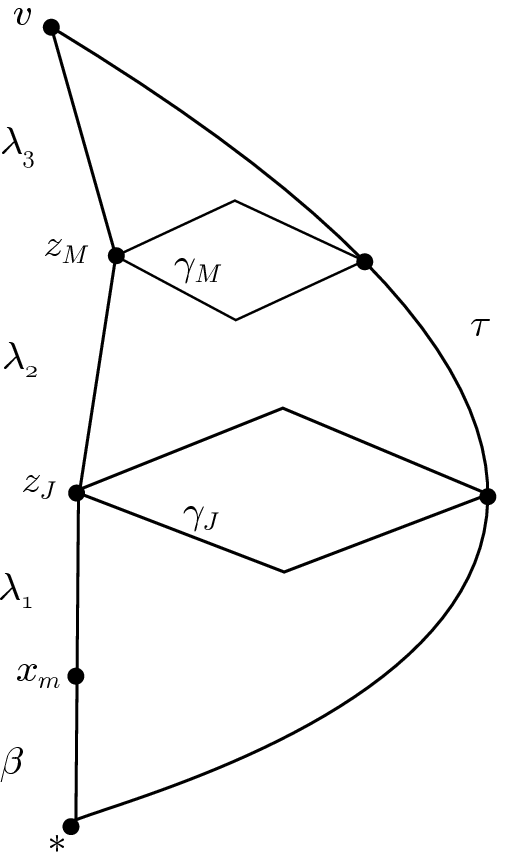}
\end{center}
\centerline{\textsc{Figure 6}}

\vspace{2mm}
For each $K$ with $J < K < M$, let $\lambda_K$ denote the initial segment of $(\lambda_1, \lambda_2)$ ending at $z_K$. Suppose $\gamma_J$ can be rearranged to begin with $U_1(\lambda_1)$ but $\gamma_M$ cannot be arranged to begin with $U_1((\lambda_1,\lambda_2))$. 
There is then $K$ with $J < K < M$ where the down edge path $\gamma_K$ at $z_K$ of the diamond for these geodesics can be rearranged to begin with $U_1(\lambda_K)$ and the down edge path $\gamma_{K+1}$ at $z_{K+1}$ can be rearranged to begin with $U_2(\lambda_{K+1})$, by Lemma \ref{arrangeu2}. By Lemma \ref{sharedwalls} and since $U_1(\lambda_{K+1})\neq U_2(\lambda_{K+1})$, there is a pair of unrelated edge labels $a_1,b_1$ of $U_1(\lambda_K)$ that commute with some unrelated pair of labels $a_2,b_2$ from $U_2(\lambda_{K+1})$. Let $\nu^K$ and $\nu^{K+1}$ be the up edge paths of the diamonds at $z_K$ and $z_{K+1}$ respectively. From Lemma \ref{diamond}, these paths differ by at most two walls, and so they have two unrelated edge labels $a_3$ and $b_3$ in common. But then the pairs $(a_i,b_i)$ must all commute, giving a visual $(\Integ_2 * \Integ_2)^3$ in $W$, a contradiction. 
\end{proof}

The proof of the next lemma basically follows that of Lemma 5.5 of \cite{MRT}.

\begin{lemma}
\label{avoidlink}
Let $\lambda$ be a geodesic based at $x_i$ extending $(\beta, e_{m+1}, \dots, e_i)$ with endpoint $v$, and let $s$ and $t$ be vertices of $\Gamma$ not in $B(v\rightarrow *)$. If $(\gamma_1,\gamma_2)$ is any rearrangement of $(\beta, e_{m+1}, \dots, e_i, \lambda)$ where $\langle lett(\gamma_2) \rangle$ is infinite, then there is a path from $s$ to $t$ of length at least two in $\Gamma$, none of whose vertices (except possibly $s$ and $t$) are in $\text{lk}(lett(\gamma_2))\cup B(v\rightarrow *)$.
\end{lemma}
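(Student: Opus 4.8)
\textbf{Proof proposal for Lemma \ref{avoidlink}.}

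The plan is to argue by contradiction: suppose every path in $\Gamma$ from $s$ to $t$ of length at least two has an interior vertex in $\text{lk}(lett(\gamma_2))\cup B(v\rightarrow *)$. I want to show this forces $C\equiv \text{lk}(lett(\gamma_2))\cup B(v\rightarrow *)$ (or a suitable subset) to be a virtual factor separator for $(W,S)$, contradicting our standing hypothesis. First I would observe that $s$ and $t$ themselves are not in $B(v\rightarrow *)$ by hypothesis, and, since $(\gamma_1,\gamma_2)$ is a geodesic with $\langle lett(\gamma_2)\rangle$ infinite, the letters of $\gamma_2$ generate a non-spherical visual subgroup; because $(W,S)$ is one-ended (so $\Gamma$ has no complete separating subgraph, Remark \ref{infend}), $\text{lk}(lett(\gamma_2))$ is a proper subset of $S$ and $lett(\gamma_2)$ contains a pair $a,b$ with $m(a,b)=\infty$ that commutes with all of $\text{lk}(lett(\gamma_2))$.

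The key steps, in order, are: (1) note that, since $s,t\notin B(v\rightarrow *)$ and $v$ is the endpoint of the geodesic $(\beta,e_{m+1},\dots,e_i,\lambda)$, one can append edges labeled $s$ and $t$ at $v$ to stay geodesic, so $s$ and $t$ are ``reachable'' and in particular are not in $\text{lk}(lett(\gamma_2))$ (a single vertex adjacent to all of $lett(\gamma_2)$ would contradict genuine reachability together with the geometry; this mirrors the argument in Lemma 5.5 of \cite{MRT}); (2) if every $s$-to-$t$ path of length $\geq 2$ passes through $C=\text{lk}(lett(\gamma_2))\cup B(v\rightarrow *)$, then (removing the case where $s,t$ are adjacent, which we handle separately using that an edge path of length two detouring through a common neighbor would avoid $C$ unless that neighbor lies in $C$) the set $C$ separates $s$ from $t$ in $\Gamma$; (3) refine $C$ to an actual separating set $C'\subseteq C$ with $s$ and $t$ in different components of $\Gamma - C'$, and set $D' = C' \cap \text{lk}(lett(\gamma_2))$, so $C' - D' \subseteq B(v\rightarrow *)$ and hence $\langle C'-D'\rangle$ is finite by Lemma \ref{finiteback}; (4) check the commuting conditions: $\langle C'-D'\rangle$ commutes with $\langle D'\rangle$ because... here I would use that $D'\subseteq \text{lk}(lett(\gamma_2))$ and the back-combing/$B$-set structure forces the needed commutation (this is exactly the delicate bookkeeping point), and the pair $a,b\in lett(\gamma_2)$ with $m(a,b)=\infty$ commutes with $D'\subseteq \text{lk}(lett(\gamma_2))$ by definition of $\text{lk}$; moreover $a,b\notin D'$ since they do not commute with each other. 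This exhibits $(C',D')$ as a virtual factor separator, the desired contradiction.

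I expect the main obstacle to be step (4): verifying that the finite part $C'-D'$ (living inside $B(v\rightarrow *)$) actually commutes with $D'$ (living inside $\text{lk}(lett(\gamma_2))$), and simultaneously that $s,t$ can genuinely be certified not to lie in $\text{lk}(lett(\gamma_2))$ so that the separation in step (2) is nontrivial. The commutation should follow from the structure of $B(v\rightarrow *)$ relative to the geodesic $(\gamma_1,\gamma_2)$ — roughly, an element of $B(v\rightarrow *)$ that fails to commute with some $d\in D'\subseteq lett(\gamma_2)^{\perp}$ would interact with $\gamma_2$ in a way that either shortens $(\gamma_1,\gamma_2)$ or enlarges $\text{lk}(lett(\gamma_2))$, contradicting geodesity or the choice of $D'$. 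The non-membership of $s,t$ in $\text{lk}(lett(\gamma_2))$ comes from the fact that $\langle lett(\gamma_2)\rangle$ is infinite and one-endedness of $W$ prevents a single generator from being adjacent to a separating-looking configuration; alternatively it is forced because $s,t$ being in $\text{lk}(lett(\gamma_2))$ together with $s,t\notin B(v\rightarrow *)$ would already let us produce $(C',D')$ trivially or contradict the assumption that no such path exists. Once these local commutation facts are pinned down, assembling the virtual factor separator is routine, and the contradiction with the standing hypothesis on $\Gamma$ finishes the proof.
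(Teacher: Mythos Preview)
Your overall strategy---contradict the standing hypothesis by exhibiting a virtual factor separator built from $C=\text{lk}(lett(\gamma_2))\cup B(v\rightarrow *)$ with $D=\text{lk}(lett(\gamma_2))$---is exactly the paper's approach. But you are missing the one observation that makes your step (4) a triviality rather than ``delicate bookkeeping,'' and one of your side claims is wrong.

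The missing observation is this: since $(\gamma_1,\gamma_2)$ is a rearrangement of a geodesic to $v$, any $e\in B(v\rightarrow *)$ appended to $(\gamma_1,\gamma_2)$ must delete, and by Lemma~\ref{radel} either $e\in lett(\gamma_2)$ or $e$ commutes with every letter of $\gamma_2$, i.e.\ $e\in\text{lk}(lett(\gamma_2))$. Hence $B(v\rightarrow *)\subseteq lett(\gamma_2)\cup\text{lk}(lett(\gamma_2))$. It follows immediately that $C-D\subseteq lett(\gamma_2)$, so $\langle C-D\rangle$ commutes with $\langle D\rangle=\langle\text{lk}(lett(\gamma_2))\rangle$ by the very definition of link; and $C-D\subseteq B(v\rightarrow *)$ gives $\langle C-D\rangle$ finite by Lemma~\ref{finiteback}. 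Your speculative mechanism (``an element of $B(v\rightarrow *)$ that fails to commute with some $d\in D'$ would interact with $\gamma_2$ in a way that either shortens\dots'') is groping toward this but never lands on it. There is no need to refine to a subset $C'\subseteq C$: the full pair $(C,D)$ already satisfies all the algebraic conditions for a virtual factor separator, so the only issue is whether $C$ separates $\Gamma$.

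Your claim in step (1) that $s,t\notin\text{lk}(lett(\gamma_2))$ is both unjustified and unnecessary; the lemma allows $s$ or $t$ to lie in $\text{lk}(lett(\gamma_2))$ (only $B(v\rightarrow *)$ is excluded), and the conclusion only constrains \emph{interior} vertices of the path. The paper handles this by first showing $C$ does not separate $\Gamma$ (else one-endedness fails when $\langle\text{lk}(lett(\gamma_2))\rangle$ is finite, or $(C,D)$ is a virtual factor separator when it is infinite), and then doing a short case analysis on whether $s=t$ and whether $s$ or $t$ lies in $\text{lk}(lett(\gamma_2))$, in each case stepping to an adjacent vertex outside $C$ before invoking non-separation. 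Your contradiction framing (``every path of length $\geq 2$ has an interior vertex in $C$'') does not by itself give that $C$ separates when $s$ or $t$ is already in $C$, which is why the case analysis is needed.
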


\begin{proof}
Since $(\beta, e_{m+1}, \dots, e_i, \lambda)$ can be rearranged to end with $\gamma_2$, for $e\in B(v\rightarrow *)$, either $\overline e\in lett(\gamma_2)$ or $\overline e\in \text{lk}(lett(\gamma_2))$. To see that $\text{lk}(lett(\gamma_2))\cup B(v\rightarrow *)$ does not separate $\Gamma(W,S)$, observe that otherwise $G$ is not one-ended if $\langle \text{lk}(lett(\gamma_2)) \rangle$ is finite or $(\text{lk}(lett(\gamma_2))\cup B(v\rightarrow *),\text{lk}(lett(\gamma_2)))$ is a virtual factor separator for $\Gamma$ if $\langle \text{lk}(\gamma_2)\rangle$ is infinite.

If $s=t$ and $s\in\text{lk}(lett(\gamma_2))$, then there is a vertex $a\in\Gamma$ adjacent to $s$ with $a\notin \text{lk}(lett(\gamma_2))\cup B(v\rightarrow *)$, since $lett(\gamma_2)$ generates an infinite group and $B(v\rightarrow *)$ does not. If $e$ is the edge between $s$ and $a$, we use the path $e$ followed by $e^{-1}$.

If $s=t$ and $s\notin\text{lk}(lett(\gamma_2))$, then there is a vertex $a\in\Gamma$ adjacent to $s$ with $a\notin \text{lk}(lett(\gamma_2))\cup B(v\rightarrow *)$, else $(\text{lk}(lett(\gamma_2))\cup B(v\rightarrow *),\text{lk}(lett(\gamma_2)))$ is a virtual factor separator for $\Gamma$. We construct the path as before.

If $s\neq t$, $s,t\notin\text{lk}(lett(\gamma_2))$ and no such path exists, then $(\text{lk}(lett(\gamma_2))\cup B(v\rightarrow *),\text{lk}(lett(\gamma_2)))$ is a virtual factor separator for $\Gamma$. Note that if there is an edge $e$ between $s$ and $t$, we use the path $e$, $e^{-1}$, $e$ to satisfy the length two requirement.

If $s\neq t$ and $s\in\text{lk}(lett(\gamma_2))$, then there is a vertex $a\in\Gamma$ adjacent to $s$ with $a\notin \text{lk}(lett(\gamma_2))\cup B(v\rightarrow *)$, since $lett(\gamma_2)$ generates an infinite group and $B(v\rightarrow *)$ does not. Now if $t\in \text{lk}(lett(\gamma_2))\cup B(v\rightarrow *)$ we obtain a $b$ adjacent to $t$ with $b\notin \text{lk}(lett(\gamma_2))\cup B(v\rightarrow *)$ and we have a path between $a$ and $b$ as above (or, if $a=b$, we already had the path), or else we connect $a$ and $t$ as above.
\end{proof}

\begin{remark}
Edge paths in $\Gamma$ of the form $(e, e^{-1})$ and $(e, e^{-1}, e)$ may seem unorthodox, but as in \cite{MRT}, they are combinatorially useful in the filter construction.
\end{remark}

\begin{remark}
Note that $U_1^{x_i}(\lambda)^{-1}$ and $U_2^{x_i}(\lambda)^{-1}$ satisfy the hypotheses of $\gamma_2$ in the previous lemma.
\end{remark}

Recall the filter construction presented near the beginning of this section, and notice that Lemma \ref{avoidlink} gives us more control during the fan construction process: instead of avoiding only $B(v\rightarrow *)$ when choosing paths in $\Gamma(W,S)$ to construct a fan based at $v$, we can avoid $B(v\rightarrow *)$ together with $\text{lk}(lett(\gamma))$, where $\gamma$ could potentially begin the down edge path of a diamond based at $v$. This is the key idea that allows us to keep the Cayley geodesics in our filter ``straight'' (in the CAT(0) sense), which makes the limit set of the filter small in $\partial X$. We'll now specify our choice of $\gamma$ at each vertex $v$ in the filter.

Recall that $W$ acts geometrically on a CAT(0) space $X$ giving a map $C:\Lambda\rightarrow X$ (respecting the action of $W$). The $\Gamma$ geodesics $(\beta, e_{m+1}, e_{m+2}, \dots)$ and $(\beta, d_{m+1}, d_{m+2}, \dots)$  $(c + d)$-track two CAT(0) geodesics in $X$ as in Lemma \ref{sameseg}, and $x_i$ denotes the endpoint of $(\beta, e_{m+1}, \dots, e_i)$, for $i\geq m$.

\begin{defn}
For each vertex $v$ of $\Lambda$, let $\rho_v$ be a $\Lambda$-geodesic from $\ast$ to $v$ such that $C(\rho_v)$ $\delta_1$-tracks the $X$-geodesic from $C(\ast)$ to $C(v)$ (Lemma \ref{deltatrack}).
\end{defn}

\begin{defn}
Suppose $\lambda$ is a geodesic extending $(\beta, e_{m+1}, \dots, e_i)$ for some $i \geq m$, and  $y$ and $z$ are vertices of $\lambda$ with $d(z,*)> d(y,*)=k$. We say $z$ is $R-$\textbf{wide in the} $\tau$ \textbf{direction} at $y$ if the $\Lambda$-distance from $y$ to $\rho_z(k)$ is at least $R$, and the down edge path at $y$ of the diamond for $(\beta, e_{m+1}, \dots, e_i, \lambda)$ and $\rho_z$ can be rearranged to begin with $\tau$. If $z$ is the endpoint of $\lambda$, we say $\lambda$ is $R-$wide in the $\tau$ direction at $y$.
\end{defn}

\begin{remark}
Using the notation in the definition, if $y=x_i$ and $d(\rho_z(i),x_i)\geq14N^2$, then $z$ is $14N^2$-wide in either the $U_1^{x_i}$ or $U_2^{x_i}$ direction at $x_i$, by Lemma \ref{arrangeu2}.
\end{remark}

Let $\delta_0=\text{max}\{1, \delta_1, c+d\}$, where $\delta_1$ is the tracking constant from Lemma \ref{deltatrack}, and $c,d$ are the tracking constants from Lemma \ref{sameseg}. 

Let $\lambda$ be a geodesic extending $(\beta, e_{m+1}, \dots, e_i)$ for some $i \geq m$. Set $A^i=U_1^{x_i}$, and define $A^i(\lambda)$ as follows:
\begin{enumerate}
\item If $U_1^{x_i}(\lambda)=U_2^{x_i}(\lambda)$, then set $A^i(\lambda)=U_1^{x_i}(\lambda)$.
\item If $U_1^{x_i}(\lambda)\neq U_2^{x_i}(\lambda)$ and $\lambda$ is not at least $20N^2\delta_0$ wide in the $U_1^{x_i}$ or $U_2^{x_i}$ direction at $x_i$, then set $A^i(\lambda)=U_1^{x_i}(\lambda)$.
\item If $U_1^{x_i}(\lambda)\neq U_2^{x_i}(\lambda)$ and $\lambda$ is at least $20N^2\delta_0$ wide in the $U_1^{x_i}$ direction at $x_i$ but less than $21N^2\delta_0$ wide in the $U_1^{x_i}$ direction at $x_i$, then set $A^i(\lambda)=U_1^{x_i}(\lambda)$ (and similarly for $U_2^{x_i}$).
\item If $U_1^{x_i}(\lambda)\neq U_2^{x_i}(\lambda)$ and $\lambda$ is at least $21N^2\delta_0$ wide in the $U_1^{x_i}$ direction at $x_i$, then let $\lambda_0$ be the longest initial segment of $\lambda$ such that $\lambda_0$ is at least $20N^2\delta_0$ wide in the $U_1^{x_i}$ direction at $x_i$ but not $21N^2\delta_0$ wide in the $U_1^{x_i}$ direction at $x_i$. Then set $A^i(\lambda)$ to be a shortest geodesic based at the endpoint of $\lambda$ containing an edge in each wall of $U_1^{x_i}(\lambda_0)$ (and similarly for $U_2^{x_i}$). By Lemma \ref{shortback}, $A^i(\lambda)$ geodesically extends  to $\ast$.
\end{enumerate}

At the endpoint of each such $\lambda$, we will construct fans avoiding $\text{lk}(lett(A^i(\lambda)))\cup B((\beta, e_{m+1}, \dots, e_i, \lambda))$ as in Lemma \ref{avoidlink}.

\vspace{5mm}

The next lemma explains why the last step in the above process is significant.

\begin{lemma}
\label{avoidworks}
Let $(\lambda_1, \lambda_2)$ be a geodesic extension of $(\beta, e_{m+1}, \dots, e_i)$. Let $\tau$ be a shortest geodesic based at the endpoint of $\lambda_2$ containing an edge in each wall of $U_1^{x_i}(\lambda_1)$. If $e$ is an edge that geodesically extends $(\beta, e_{m+1}, \dots, e_i, \lambda_1, \lambda_2)$ with $\overline e\notin \text{lk}(lett(\tau))$, then for any geodesic extension $\gamma$ of $(\beta, e_{m+1}, \dots, e_i, \lambda_1, \lambda_2, e)$ and any rearrangement $\gamma'$ of $(\beta, e_{m+1}, \dots, e_i, \lambda_1, \lambda_2, e, \gamma)$, no edge in $w(e)$ can appear on the up edge path at the endpoint of $\lambda_1$ of the diamond for $(\beta, e_{m+1}, \dots, e_i, \lambda_1, \lambda_2, e, \gamma)$ and $\gamma'$ if the down edge path at the endpoint of $\lambda_1$ contains edges in all the walls of $U_1^{x_i}(\lambda_1)$. 
\end{lemma}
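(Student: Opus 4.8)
The plan is to argue by contradiction using the no $(\Integ_2 * \Integ_2)^3$ hypothesis, following the same pattern as Lemmas \ref{sharedwalls} and \ref{nochange}. Suppose, contrary to the conclusion, that some edge $f \in w(e)$ does appear on the up edge path $\nu$ at the endpoint $z$ of $\lambda_1$ of the diamond for $(\beta, e_{m+1},\dots,e_i,\lambda_1,\lambda_2,e,\gamma)$ and $\gamma'$, while the down edge path $\mu$ at $z$ contains edges in all the walls of $U_1^{x_i}(\lambda_1)$. By Lemma \ref{diamond}, $lett(\nu)$ is disjoint from and commutes with $lett(\mu)$; in particular $\overline e = \overline f$ commutes with every edge label of $\mu$, hence with every edge label of $U_1^{x_i}(\lambda_1)$ (since $\mu$ contains an edge in each of its walls, and being in the same wall forces the same label by Remark \ref{wallprops}(1)).

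First I would extract from $U_1^{x_i}(\lambda_1)$ a pair of unrelated edge labels. Recall $\tau$ is a shortest geodesic at the endpoint of $\lambda_2$ containing an edge in each wall of $U_1^{x_i}(\lambda_1)$, and $U_1^{x_i}(\lambda_1)$ has length at least $6N$ (it came from the $U_i$ reduction process, or from Lemma \ref{sharedwalls} it retains at least $6N-3$ walls at each step). Since $\tau$ and $U_1^{x_i}(\lambda_1)$ have the same walls and hence the same letter set, and since $\overline e \notin \text{lk}(lett(\tau))$, there is an edge label $p$ of $\tau$ (equivalently of $U_1^{x_i}(\lambda_1)$) with $m(\overline e, p) = \infty$. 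But we just showed $\overline e$ commutes with every label of $U_1^{x_i}(\lambda_1)$ — a direct contradiction. That already finishes it, but I should double-check the one subtlety: the down edge path $\mu$ "contains edges in all the walls of $U_1^{x_i}(\lambda_1)$" means each such wall is represented; by Remark \ref{wallprops}(1) an edge in the wall of an edge of $U_1^{x_i}(\lambda_1)$ carries that same label, so $lett(U_1^{x_i}(\lambda_1)) \subseteq lett(\mu)$, and then disjointness-and-commuting of $lett(\nu)$ with $lett(\mu)$ from Lemma \ref{diamond} forces $\overline f$ to commute with all of $lett(U_1^{x_i}(\lambda_1))$, as claimed.

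The step I expect to be the genuine (if mild) obstacle is verifying that $\overline e \notin \text{lk}(lett(\tau))$ translates correctly into the statement "$\overline e$ fails to commute with some label of $U_1^{x_i}(\lambda_1)$." This requires knowing $lett(\tau) = lett(U_1^{x_i}(\lambda_1))$, which holds because $\tau$ and $U_1^{x_i}(\lambda_1)$ define the same set of walls (by construction $\tau$ contains an edge in each wall of $U_1^{x_i}(\lambda_1)$, and being a shortest such geodesic it contains no others, so its walls coincide with those of $U_1^{x_i}(\lambda_1)$), and then Remark \ref{wallprops}(4) together with Lemma \ref{sameletters} gives equality of letter sets. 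Once that bookkeeping is in place, the contradiction with the commuting conclusion of Lemma \ref{diamond} is immediate, and no appeal to $(\Integ_2*\Integ_2)^3$ is even needed — the obstruction is already visible at the level of a single non-commuting pair $(\overline e, p)$. I would present the argument in that streamlined form, pausing only to record the wall/letter identifications explicitly since they are used silently.
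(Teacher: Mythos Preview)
Your argument has a genuine gap: the claim that $\tau$ and $U_1^{x_i}(\lambda_1)$ have the same set of walls is false. The geodesic $U_1^{x_i}(\lambda_1)$ is based at the endpoint of $\lambda_1$, whereas $\tau$ is a shortest geodesic based at the endpoint of $\lambda_2$ containing an edge in each wall of $U_1^{x_i}(\lambda_1)$. These basepoints differ, and to reach the walls of $U_1^{x_i}(\lambda_1)$ from the endpoint of $\lambda_2$, the path $\tau$ may be forced to cross additional walls. Minimality of $\tau$ does \emph{not} rule these out: an extra edge $c$ of $\tau$ can only be deleted if it can be pushed to the end, which fails whenever some later edge of $\tau$ has a label not commuting with $\overline c$. (Indeed, Remark~\ref{onedirectavoid} confirms this: in the one-direction case $\tau$ has the walls of $U_1^{x_i}((\lambda_1,\lambda_2))$, not of $U_1^{x_i}(\lambda_1)$, and these typically include walls coming from $\lambda_2$.) Consequently the hypothesis $\overline e\notin \text{lk}(lett(\tau))$ only says $\overline e$ fails to commute with \emph{some} letter of $\tau$, and that letter may well lie outside $lett(U_1^{x_i}(\lambda_1))$; so no contradiction with ``$\overline e$ commutes with $lett(\mu)\supseteq lett(U_1^{x_i}(\lambda_1))$'' follows.

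Your first step is correct and is exactly how the paper begins: if $w(e)$ meets the up edge path, then $w(e)$ crosses every wall of $U_1^{x_i}(\lambda_1)$. The paper then picks an edge $c_1$ of $\tau$ with $m(\overline e,\overline{c}_1)=\infty$, observes (by what you proved) that $w(c_1)$ cannot be a wall of $U_1^{x_i}(\lambda_1)$, and uses minimality of $\tau$ to find a later edge $c_2$ with $m(\overline{c}_1,\overline{c}_2)=\infty$. Since $w(c_2)$ and $w(e)$ lie on opposite sides of $w(c_1)$ they do not cross, so $w(c_2)$ is again not a wall of $U_1^{x_i}(\lambda_1)$; iterating produces an infinite chain inside the finite path $\tau$. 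That chain argument is the missing ingredient needed to handle the extra walls of $\tau$, and it is where the real content of the lemma lies.
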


\begin{proof}
Suppose not; i.e. there is a geodesic extension $\gamma$ of $(\beta, e_{m+1}, \dots, e_i, \lambda_1, \lambda_2, e)$ and a rearrangement $\gamma'$ of $(\beta, e_{m+1}, \dots, e_i, \lambda_1, \lambda_2, e, \gamma)$ such that an edge $e'$ of $w(e)$ appears on the up edge path at the endpoint of $\lambda_1$ of the diamond for these geodesics, and the down edge path at the endpoint of $\lambda_1$  contains edges in all the walls of $U_1^{x_i}(\lambda_1)$.  Then $w(e')=w(e)$ crosses all walls of $U_1^{x_i}(\lambda_1)$. Let $c_1$ be an edge of $\tau$ such that $\overline e$ does not commute with $\overline c_1 $. In particular, $w(c_1)$ is not a wall of $U_1^{x_i}(\lambda_1)$. By the definition of $\tau$, there is an edge $c_2$ of $\tau$, following $c_1$, such that $\overline c_1$ does not commute with $\overline c_2$. The walls $w(c_2)$ and $w(e)$ are on opposite sides of $w(c_1)$ (see Remark \ref{wallsw}), so they do not cross. In particular, $w(c_2)$ is not a wall of $U_1^{x_i}(\lambda_1)$. Clearly we can continue picking $c_i$ in such a way, but since the length of $\tau$ is finite, this process must stop. This gives the desired contradiction.
\end{proof}

\begin{remark}
\label{onedirectavoid}
Note that Lemma \ref{avoidworks} does not require that $U_1^{x_i}(\lambda_1)\neq U_2^{x_i}(\lambda_1)$ or $U_1^{x_i}((\lambda_1, \lambda_2))\neq U_2^{x_i}((\lambda_1, \lambda_2))$. It is easy to show from our construction that if $U_1^{x_i}(\lambda_1)=U_2^{x_i}(\lambda_1)$, then $\tau$ (as defined in Lemma \ref{avoidworks}) has the same walls as $U_1^{x_i}((\lambda_1, \lambda_2))=U_2^{x_i}((\lambda_1, \lambda_2))$, and so avoiding $\text{lk}(lett(A^i(\lambda)))$ has the effect that no wall of $\lambda_2$ can contain an edge of an up edge path at the end point of $\lambda_1$ for a diamond as described in Lemma \ref{avoidworks}.
\end{remark}

For a geodesic extension $\lambda$ of $(\beta, d_{m+1}, \dots, d_i)$, we define $A_d^i(\lambda)$ in the analagous way. To simplify notation, we will only deal with geodesic extensions $\lambda$ of $(\beta, e_{m+1}, \dots, e_i)$, except where necessary. 

We now return to the filter construction. Set $a_1=\overline e_{m+1}$ and $b_1=\overline d_{m+1}$. We have $a_1,b_1\notin B(x_m\rightarrow *)$, so let $a_1, t_1, \dots, t_k, b_1$ be the vertices of a path of length at least 2 (Lemma \ref{avoidlink}) from $a_1$ to $b_1$ in $\Gamma(W,S)$, where each $t_i \notin \text{lk}(lett(A^m))\cup B(x_m \rightarrow *)$. We construct a fan in $\Lambda$ as before:

\vspace{2mm}

\begin{center}
\includegraphics{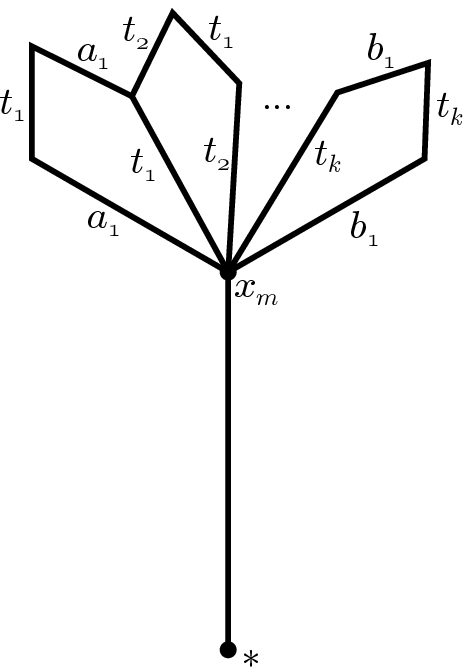}
\end{center}

\centerline{\textsc{Figure 7}}

\vspace{3mm}

\begin{defn}
The edges labeled $a_1$ and $b_1$ at $x_m$ in the fan are called (respectively) the \textbf{left} and \textbf{right fan edges} at $x_m$. The edges labeled $t_1,\dots,t_{k}$ at $x_m$ are called \textbf{interior fan edges}. This fan is called the \textbf{first-level} fan, and the vertices at the endpoints of the edges based at $x_m$ and labeled $x_{m+1}, t_1, \dots, t_k, y_{m+1}$ are called \textbf{first-level} vertices.
\end{defn}

Now, let $a_2=\overline e_{m+2}$, $b_2=\overline d_{m+2}$ and let $w_i$ be the edge at $x_m$ labeled $t_i$ for $1 \leq i \leq k$. Continue constructing the filter by constructing fans avoiding $\text{lk}(lett(A^m((w_i))))\cup B((\beta, w_i))$ at the endpoint of each $w_i$, avoiding $\text{lk}(lett(A^{m+1}))\cup B(x_{m+1} \rightarrow *)$ at $x_{m+1}$, and avoiding $\text{lk}(lett(A_d^{m+1}))\cup B(y_{m+1} \rightarrow *)$ at $y_{m+1}$. Each of these fans is called a \textbf{second-level} fan, and each vertex of distance $2$ from $x_m$ (that will be the base vertex of a third-level fan) is called a \textbf{second-level} vertex. 

\vspace{1mm}

\begin{center}
\includegraphics{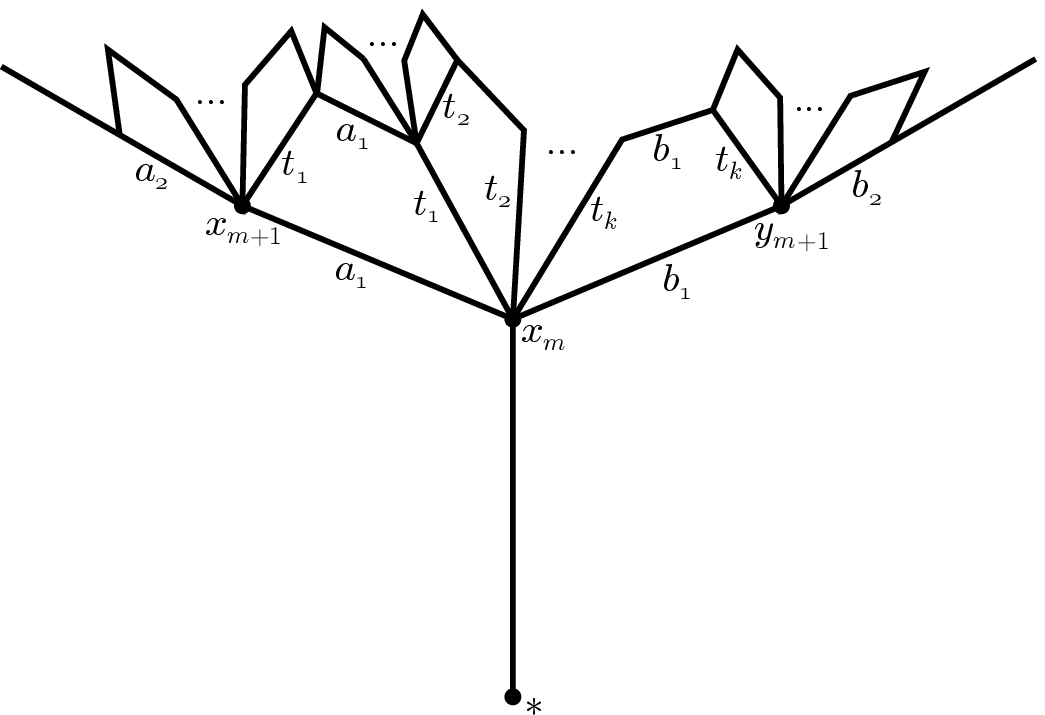}
\end{center}

\centerline{\textsc{Figure 8}}

\vspace{3mm}

It could occur that two edges of this graph share a vertex and are labeled the same; for example, we could have $t_1=a_2$ in Figure 8. We do not identify these edges; instead, we will construct an edge path between them as described in Lemma \ref{avoidlink} and extend the graph between them.

In order to build the third-level fans, we must specify geodesics from $x_m$ to each vertex defined so far, so that $A^i(\lambda)$ is well-defined at each second-level vertex. We'll do this by picking the upper left edge from each first-level fan-loop to be a \textbf{non-tree} edge. This specifies a geodesic from $x_m$ to each second-level vertex. We designate the upper right edge from each second-level fan as a non-tree edge, and continue alternating at each level, so the upper right edge of a $n$-th level fan is a non-tree edge if $n$ is even, and the upper left edge of a $n$-th level fan is a non-tree edge if $n$ is odd. By continuing to construct fans and designate non-tree edges, we construct a filter for our $\Lambda$-geodesics $(\beta, e_{m+1}, e_{m+2}, \dots)$ and $(\beta, d_{m+1}, d_{m+2}, \dots)$. 

Recall that for an edge $a$ of $\Lambda(W,S)$ with initial vertex $y_1$ and terminal vertex $y_2$, an edge $e$ with initial vertex $w_1$ and terminal vertex $w_2$ is in the same wall as $a$ if there is an edge path $(t_1, \dots, t_n)$ in $\Lambda(W,S)$ based at $w_1$ so that $w_1 \overline t_1 \cdots \overline t_n=y_1$ and $w_2 \overline t_1  \cdots \overline t_n=y_2$, and $m(\overline e, \overline t_i )=2$ for each $1\leq i \leq n$. For two edges $a$ and $e$ of $F$, we say $a$ and $e$ are in the same \textbf{filter wall} if there is such a path $(t_1, \dots, t_n)$ in $F$.

\begin{remark}
\label{filterprops}
The following are useful facts about a filter $F$ for two such geodesics ((1)-(5) from \cite{MRT}):

\begin{enumerate}

\item Each vertex $v$ of $F$ has exactly one or two edges beneath it, and there is a unique fan containing all edges (a left and right fan edge, and at least one interior edge) above $v$. We would not have this fact if we allowed association of same-labeled edges at a given vertex.

\item If a vertex of $F$ has exactly one edge below it, then the edge is either $e_i$ (for some $i$), $d_i$ (for some $i$), or an interior fan edge.

\item If a vertex of $F$ has exactly two edges below it, then one is a right fan edge (the one to the left), and one is a left fan edge, and both belong to a single fan loop.

\item $F$ minus all non-tree edges is a tree containing $(\beta, e_{m+1}, e_{m+2}, \dots)$ and $(\beta, d_{m+1}, d_{m+2}, \dots)$ and all interior edges of all fans.

\item If $T$ is the tree obtained from $F$ by removing all non-tree edges, then there are no dead ends in $T$; i.e. for every vertex $v$ of $T$, there is an interior edge extending from $v$.

\item No two consecutive edges of $T$ not on $(\beta, e_{m+1}, e_{m+2}, \dots)$ or $(\beta, d_{m+1}, d_{m+2}, \dots)$ are right (left) fan edges.

\item If $\lambda$ is a geodesic in $F$ extending $(\beta, e_{m+1}, \dots, e_i)$ (and not passing through $x_{i+1}$), then $\lambda$ shares at most one filter wall with $(e_{i+1}, e_{i+2}, \dots)$, and it is the wall of $e_{i+1}$.

\end{enumerate}
\end{remark}

By rescaling, we may assume the image of each edge of $\Lambda$ under $C$ is of length at most 1 in $X$. Then for vertices $v$ and $w$ of $\Lambda$, $d_\Lambda(v,w) \geq d_X(C(v), C(w))$. 

\begin{lemma}
\label{twodirect}
If $(\beta, e_{m+1}, \dots, e_i, \lambda)$ is geodesic in the tree $T$ with endpoint $v$ and $U_1^{x_i}(\lambda)\neq U_2^{x_i}(\lambda)$, then some point on the CAT(0) geodesic between $C(v)$ and $C(*)$ is within $X$-distance $101N^2\delta_0$ of $C(x_i)$.
\end{lemma}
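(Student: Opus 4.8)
### Proof Proposal

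The plan is to exploit the fact that $U_1^{x_i}(\lambda) \neq U_2^{x_i}(\lambda)$ forces the existence of two genuinely distinct ``wide directions'' at $x_i$, and that a long initial segment of $\lambda$ must be wide in the $U_1^{x_i}$ direction while an appropriate extension of $\rho_v$ witnesses width in (or close to) the $U_2^{x_i}$ direction. First I would recall that by construction of the $A^i(\lambda)$ and by Remark \ref{notboth}, the down edge path at $x_i$ of the diamond for $(\beta, e_{m+1}, \dots, e_i, \lambda)$ and $\rho_v$ can be rearranged to begin with exactly one of $U_1^{x_i}$ or $U_2^{x_i}$ (by Lemma \ref{arrangeu2}, provided the relevant segment of $\rho_v$ at distance $d(x_i, *)$ is at least $14N^2$ from $x_i$). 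Say it begins with $U_2^{x_i}$. On the other hand, the geodesic $(\beta, e_{m+1}, \dots, e_i, \lambda)$ together with some geodesic extension realizing width in the $U_1^{x_i}$ direction — which exists precisely because the construction of the $U_j^{x_i}(\lambda)$ only ever keeps a second direction $U_2^{x_i}$ alive when there genuinely is a geodesic extension leading to a wide bigon in the $U_1^{x_i}$ direction — shows $\lambda$ (or a long initial segment of it) is $16N^2$-wide in the $U_1^{x_i}$ direction at $x_i$.

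Next I would set up the key comparison. Let $k = d(x_i, *)$ and consider the point $\rho_v(k) \in \Lambda$; since $C(\rho_v)$ $\delta_1$-tracks the CAT(0) geodesic $[C(*), C(v)]$, the CAT(0) point $p$ on that geodesic at distance $k$ from $C(*)$ is within $\delta_1 \le \delta_0$ of $C(\rho_v(k))$, and $C(x_i)$ is within $\delta_0$ of the CAT(0) geodesic $[C(*), C(x_i)]$ endpoint by definition. So it suffices to bound $d_\Lambda(x_i, \rho_v(k))$: by the rescaling convention, $d_X(C(x_i), C(\rho_v(k))) \le d_\Lambda(x_i, \rho_v(k))$, and then the triangle inequality through $p$ gives the $X$-distance bound with constant comparable to $d_\Lambda(x_i, \rho_v(k)) + O(\delta_0)$. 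The crux is therefore: \emph{show $d_\Lambda(x_i, \rho_v(k))$ is bounded by a linear-in-$N^2$ multiple of $\delta_0$.} Suppose for contradiction it is at least, say, $14N^2$. Then we are exactly in the situation of Lemma \ref{doublediamond} — or rather its mechanism — applied with $\lambda_1$ the $\Lambda$-geodesic $(\beta, e_{m+1}, \dots, e_i, \lambda)$, $\lambda_2 = \rho_v$, and $\lambda_3$ a geodesic extension $(\beta,e_{m+1},\dots,e_i,\lambda')$ realizing width in the $U_1^{x_i}$ direction, all from $*$ to appropriate common endpoints; the down edge paths of the two diamonds at $x_i$ begin with $U_1^{x_i}$ and $U_2^{x_i}$ respectively, which by Remark \ref{ucommute} share fewer than $N$ walls and whose unshared letters commute.

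The contradiction will then come from the no-$(\Integ_2 \ast \Integ_2)^3$ hypothesis, exactly as in the proof of Lemma \ref{doublediamond} and Remark \ref{notboth}: if both $\nu_{12}$ (down path toward $\rho_v$, beginning $U_2^{x_i}$) and $\nu_{13}$ (down path toward the $U_1^{x_i}$-wide extension, beginning $U_1^{x_i}$) were long — each at least $2N$, which follows from $|U_j| \ge 6N$ and the width hypotheses — then since $U_1^{x_i}$ and $U_2^{x_i}$ each individually contain an unrelated commuting pair (they are down edge paths of wide diamonds, so cannot contain a $(\Integ_2 \ast \Integ_2)^2$, forcing their letters into a commuting configuration is the wrong way round — rather, precisely because they are wide-diamond down paths their letters \emph{do} pairwise commute in a controlled way, but $U_1^{x_i}$ has letters not commuting with those of $U_2^{x_i}$ only on a bounded overlap), the up edge paths at $x_i$ of the two diamonds differ by at most two walls (Lemma \ref{diamond}), giving a third commuting unrelated pair — this is the standard $(\Integ_2 \ast \Integ_2)^3$ extraction used throughout Section 4. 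I expect the main obstacle to be making the bookkeeping in this last step precise: one must track which walls of $U_1^{x_i}$, $U_2^{x_i}$, and the two up edge paths are pairwise crossing versus non-crossing, and confirm that after deleting the (at most $N$) shared walls between the two down paths and the (at most two) shared walls between the two up paths, what remains is genuinely three pairwise-disjoint pairs of unrelated generators that pairwise commute — i.e. a visual $(\Integ_2\ast\Integ_2)^3$. Once that contradiction is in hand, $d_\Lambda(x_i, \rho_v(k)) < 14N^2$, and chasing the tracking constants $\delta_0$ through the CAT(0) triangle inequality (as sketched above, with room to spare) yields the stated bound $101N^2\delta_0$.
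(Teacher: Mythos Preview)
Your approach has a genuine gap: you never use the hypothesis that $\lambda$ lies in the filter tree $T$, and without it the statement is simply false. The condition $U_1^{x_i}(\lambda)\neq U_2^{x_i}(\lambda)$ only records that there \emph{exist} geodesic extensions of $(\beta,e_{m+1},\dots,e_i)$ producing wide bigons in each of two directions at $x_i$; it places no constraint on where the particular endpoint $v$ of $\lambda$ sits relative to $\rho_v$. So assuming $d_\Lambda(x_i,\rho_v(k))\geq 14N^2$ and that the down path at $x_i$ for $\lambda$ versus $\rho_v$ begins with $U_2^{x_i}$, while some \emph{other} extension $\lambda'$ is wide in the $U_1^{x_i}$ direction, is a perfectly consistent configuration---that is exactly what having two directions means. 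Your appeal to the mechanism of Lemma~\ref{doublediamond} breaks down because that lemma compares three geodesics \emph{between the same two endpoints}; your $\lambda'$ ends at a different vertex than $v$, so there is no common bigon in which the two up-edge paths can be compared, and no third unrelated pair is forced.

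The paper's proof works quite differently and hinges entirely on the filter construction. One assumes $v$ is very wide (say in the $U_1^{x_i}$ direction) at $x_i$, then locates the last intermediate vertex $w$ on $\lambda$ that is only moderately wide ($20N^2\delta_0$ to $21N^2\delta_0$) in that direction. Claim~1 uses Lemma~\ref{nochange} and a careful comparison of $\rho_w$ and $\rho_v$ to show $v$ is wide in the $U_1^{x_i}(\lambda_w)$ direction \emph{at $w$}. Claim~2 then derives a contradiction from the way the filter was built: beyond $w$ every interior fan edge of $\lambda$ was chosen to avoid $\text{lk}(lett(A^i(\cdot)))$, which (after the threshold $20N^2\delta_0$) equals $\text{lk}(lett(U_1^{x_i}(\lambda_w,\dots)))$; Lemma~\ref{avoidworks} then forbids those edges from contributing to the up path of a $U_1^{x_i}(\lambda_w)$-diamond at $w$, and property~(6) of Remark~\ref{filterprops} (no two consecutive left or right fan edges) rules out the remaining possibility. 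None of these ingredients---the intermediate point $w$, the avoidance sets $A^i(\lambda)$, Lemma~\ref{avoidworks}, Lemma~\ref{nochange}, or the alternating fan-edge property---appears in your argument, and they cannot be bypassed.
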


\begin{proof}
Suppose otherwise; then the endpoint $v$ of $\lambda$ is at least $100N^2\delta_0$ wide at $x_i$, and so suppose $v$ is wide in the $U_1^{x_i}$ direction at $x_i$. Choose the last vertex $w$ on $\lambda$ such that $w$ is between $20N^2\delta_0$ and $21N^2\delta_0$ wide in the $U_1^{x_i}$ direction at $x_i$, so that every vertex between $v$ and $w$ on $\lambda$ is at least $21N^2\delta_0$ wide in the $U_1^{x_i}$ direction at $x_i$. Let $\lambda_w$ be the segment of $\lambda$ starting at $x_i$ and ending at $w$. We will show that $v$ is wide in the $U_1^{x_i}(\lambda_w)$ direction at $w$ and that $v$ cannot be wide in the $U_1^{x_i}(\lambda_w)$ direction at $w$, obtaining a contradiction.
\\

\noindent \textbf{Claim 1: } The vertex $v$ is wide in the $U_1^{x_i}(\lambda_w)$ direction at $w$.
\\

\noindent Recall that $\rho_w$ and $\rho_v$ are $\Lambda$-geodesics $\delta_1$-tracking the $X$-geodesics from $C(*)$ to $C(w)$ and $C(v)$ respectively. By CAT(0) geometry, $\rho_v$ is at least $75N^2\delta_0$ wide at $w$, since $w$ is less than $21N^2\delta_0$ wide at $x_i$. Consider Figure 9, with diamonds for these geodesics as in Lemma \ref{diamond}:

\begin{center}
\includegraphics{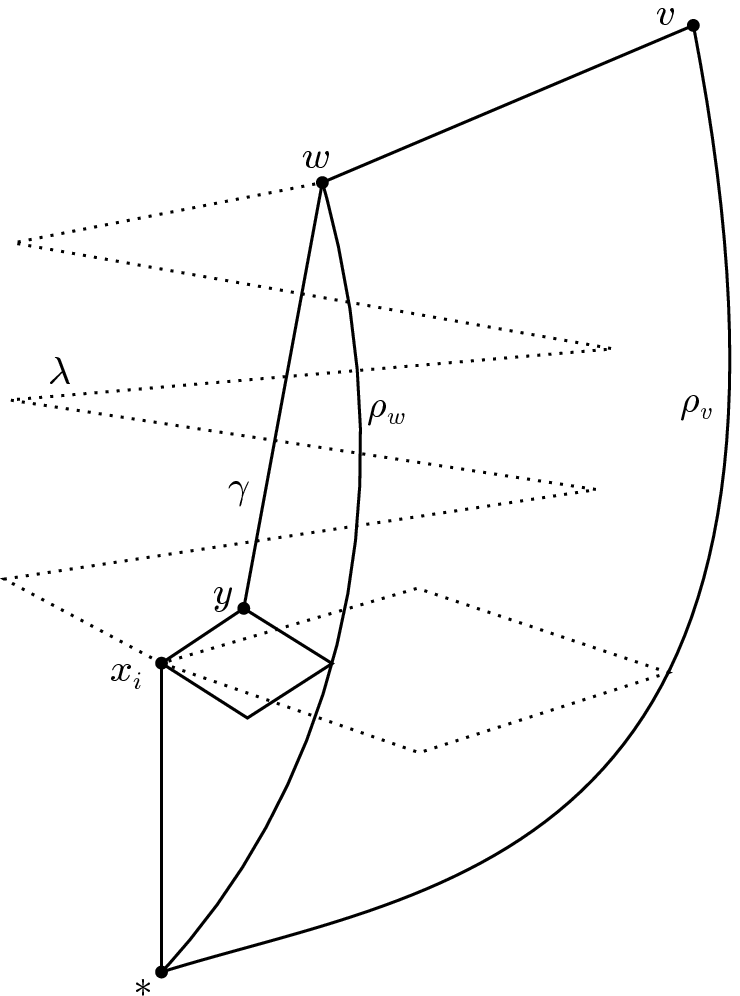}
\end{center}

\centerline{\textsc{Figure 9}}

\vspace{3mm}

Let $y$ be the endpoint of the up edge path of the diamond at $x_i$ for $\rho_w$ and $(\beta, e_{m+1}, \dots, e_i, \lambda_w)$, and let $\gamma_0$ be any geodesic from $y$ to $w$. A simple van Kampen diagram argument shows that there is a rearrangement $\gamma_1$ of $\gamma_0$ such that if $w(c_1), w(c_2), \dots, w(c_n)$ are the walls of the edges of $\gamma_0$, then $\gamma_1$ crosses these walls in the same order as $\rho_w$. Let $\gamma$ be any geodesic from $x_i$ to $y$ followed by $\gamma_1$. By Lemma \ref{diamond}, it is clear that each vertex $x$ of $\gamma$ is of $\Lambda$-distance less than $21N^2\delta_0$ from the corresponding vertex $x'$ of $\rho_w$ (satisfying $d(x, *)=d(x', *)$ in $\Lambda$).  Therefore $\gamma$ is of $\Lambda$-distance at least $54N^2\delta_0$ from $\rho_v$. Now, if no vertex of $\lambda_w$ is within $\Lambda$-distance $14N^2$ of the corresponding vertex of $\rho_v$, then by Lemma \ref{nochange} (with $\lambda_1$ trivial), $v$ is $75N^2\delta_0$ wide in the $U_1^{x_i}(\lambda_w)$ direction at $w$, as claimed. Suppose there are vertices of $\lambda_w$  within $\Lambda$-distance $14N^2$ of the corresponding vertices on $\rho_v$, and list the consecutive points $z_1, \dots ,z_\ell$ of $\lambda_w$ (with $z_1$ closest to $x_i$) where each $z_j$ has the property that if $g_j$ and $m_j$ are the points on $\gamma$ and $\rho_v$ respectively with $d(z_j, *)=d(g_j, *)=d(m_j, *)$, then $\abs{d(z_j, g_j) - d(z_j, m_j)} < N$ (so each $z_j$ is almost $\Lambda$-equidistant from its corresponding points on $\gamma$ and $\rho_v$). 
Let $\lambda_{z_j}$ denote the initial segment of $\lambda_w$ ending at each $z_j$. Now, $\rho_v$ (equivalently $v$) is wide in the $U_1^{x_i}(\lambda_{z_1})$ direction at $z_1$, since $\lambda_w$ has not yet passed close to $\rho_v$. Now consider the down edge path of the diamond at $z_1$ for $\lambda_w$ and $\gamma$; this path is of length more than $7N^2$ and must have edges in all the walls of $U_2^{x_i}(\lambda_{z_1})$ (Lemma \ref{arrangeu2}), else by Lemma \ref{doublediamond}, $\gamma$ and $\rho_v$ would be close. Now, if $\rho_v$ is wide in the $U_2^{x_i}(\lambda_{z_2})$ direction at $z_2$, then the down edge path at $z_2$ for the diamond for $\lambda_w$ and $\gamma$ must have edges in all the walls of $U_1^{x_i}(\lambda_{z_2})$; however, by Lemma \ref{nochange}, at most one of these directions could have switched, since $\lambda$ does not pass close to one of $\rho_v$ or $\gamma$ between $z_1$ and $z_2$. Continuing this argument along the $z_i$ shows that $v$ is wide in the $U_1^{x_i}(\lambda_w)$ direction at $w$, as claimed.
\\

\noindent \textbf{Claim 2: }The vertex $v$ cannot be wide in the $U_1^{x_i}(\lambda_w)$ direction at $w$.
\\

\noindent Note that no interior fan edges on $\lambda$ between $v$ and $w$ can have walls appearing on the up edge path of a $U_1^{x_i}(\lambda_w)$ diamond at $w$ by Lemma \ref{avoidworks}, since all of these edges have labels chosen to avoid $\text{lk}(lett(U_1^{x_i}((\lambda_w, ...))))$. Also note that if the first edges of $\lambda$ after $\lambda_w$ are a right fan edge followed by a left fan edge, the left fan edge shares a wall with an interior fan edge adjacent to $\lambda$, and so it was also chosen to avoid $\text{lk}(lett(U_1^{x_i}((\lambda_w, ...))))$, and so no edge in its wall can appear on a $U_1^{x_i}(\lambda_w)$ diamond at $w$ (and similarly for a left fan edge followed by right fan edge). The same analysis holds for any right or left fan edge appearing after an interior fan edge (except for at most one edge of $\lambda$, which could share a wall with a right/left fan edge based at $w$). Thus the only way $\lambda$ can have enough edges in the same walls as edges on the up edge path of a $U_1^{x_i}(\lambda_w)$ diamond is if a large sequence of the edges of $\lambda$ immediately after $\lambda_w$ are all right fan edges or all left fan edges, which cannot happen by (6) of Remark \ref{filterprops}. Thus $v$ is not wide in the $U_1^{x_i}(\lambda_w)$ direction at $x_i$, which gives the desired contradiction.
\end{proof}

\begin{lemma}
If $\lambda$ is a geodesic in the tree $T$ with endpoint $v$ that extends $(\beta, e_{m+1}, \dots, e_i)$ and $U_1^{x_i}(\lambda) = U_2^{x_i}(\lambda)$, then some point on the CAT(0) geodesic between $C(v)$ and $C(*)$ is within $X$-distance $118N^2\delta_0$ of $C(x_i)$.
\end{lemma}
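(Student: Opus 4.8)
The plan is to argue by contradiction, reproducing the two--claim structure of the proof of Lemma~\ref{twodirect} and exploiting the fact that, in the one--direction case, the fans along $\lambda$ are governed by step~(1) of the $A^i$ construction --- so that the link--avoidance is keyed to the single direction $U_1^{x_i}(\cdot)=U_2^{x_i}(\cdot)$ itself --- rather than by step~(4) with a frozen segment $\lambda_0$. Suppose no point of the CAT(0) geodesic from $C(v)$ to $C(*)$ is within $117N^2\delta_0$ of $C(x_i)$. Applying Lemma~\ref{deltatrack} to $\rho_v$, and using $d_\Lambda\geq d_X$ after the rescaling, $v$ is at least $116N^2\delta_0$ wide at $x_i$; after rearranging $\rho_v$ and invoking Lemma~\ref{arrangeu} we may take it to be wide in the $U_1^{x_i}$ direction. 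The extra $17N^2\delta_0$ over the constant of Lemma~\ref{twodirect} is slack left for the bookkeeping described below.

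As in the proof of Lemma~\ref{twodirect}, fix the last vertex $w$ on $\lambda$ that is between $20N^2\delta_0$ and $21N^2\delta_0$ wide in the $U_1^{x_i}$ direction at $x_i$ (this exists because $v$ is far wider than $21N^2\delta_0$ in that direction), and let $\lambda_w$ be the segment of $\lambda$ from $x_i$ to $w$. \textbf{Claim~1}, exactly as in Lemma~\ref{twodirect} --- via the $z_1,\dots,z_\ell$ analysis, invoking Lemma~\ref{nochange}, Lemma~\ref{arrangeu2} and Lemma~\ref{doublediamond} when $\lambda_w$ passes $\Lambda$--close to $\rho_v$ --- gives that $v$ is at least $75N^2\delta_0$ wide in the $U_1^{x_i}(\lambda_w)$ direction at $w$; hence the down edge path at $w$ of the diamond (Lemma~\ref{diamond}) for $(\beta,e_{m+1},\dots,e_i,\lambda)$ and $\rho_v$ has length well above $7N^2$ and contains an edge in every wall of $U_1^{x_i}(\lambda_w)$. \textbf{Claim~2} contradicts this: the walls of the up edge path at $w$ of that diamond are walls of edges of $\lambda$ beyond $w$, and by Lemma~\ref{avoidworks} together with Remark~\ref{onedirectavoid} no interior fan edge, and no left or right fan edge immediately following an interior fan edge, can contribute such a wall, while Remark~\ref{filterprops}(6) rules out a long run of consecutive left (or consecutive right) fan edges along $\lambda$ past $w$; hence apart from a bounded ($O(N^2)$) set of exceptional edges no edge of $\lambda$ beyond $w$ contributes a wall, the up edge path has length $O(N^2)$, and $\rho_v$ is not wide at $w$ --- a contradiction.

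The delicate point, and the reason a larger constant is needed here than in Lemma~\ref{twodirect}, is making Claim~2 rigorous when the reference direction is not frozen. Beyond $w$, the fan labels avoid $\text{lk}(lett(A^i(\cdot)))$ for the \emph{moving} direction $A^i(\cdot)$ (step~(1) in the one--direction stretch, step~(4) with $\lambda_0=\lambda_w$ in any intervening two--direction stretch), so to apply Lemma~\ref{avoidworks} one must verify that the walls of $A^i(\cdot)$ stay among the walls of the down edge path of Claim~1 up to a bounded error, i.e.\ that $A^i(\cdot)$ never drifts far from $U_1^{x_i}(\lambda_w)$. In a two--direction stretch the drift per edge is bounded by Lemma~\ref{sharedwalls} ($6N-3$ shared walls), and the total drift is bounded because a direction cannot come within $6N-3$ walls of two genuinely different directions without forcing a $(\Integ_2\ast\Integ_2)^2$ inside $lett$ of a down edge path (the argument in the paragraph following Lemma~\ref{sharedwalls}); across the one--direction transition and within the one--direction stretch the change is by a single reattachment or an already reduced replacement, as steps~(1)--(2) of the $U_j^{x_i}(\cdot)$ construction record. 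Once this drift, the $\delta_1$--tracking error, the back--combing depth bounds, and the handful of exceptional fan edges are all shown to lie comfortably inside the gap between $116N^2\delta_0$ and $O(N^2)$, the strict inequality survives and the lemma follows with room to spare. Thus the only real obstacle is this accounting; the geometric mechanism is identical to that of Lemma~\ref{twodirect}.
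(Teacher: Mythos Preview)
Your approach is genuinely different from the paper's, and considerably more laborious. The paper does not redo the two--claim argument of Lemma~\ref{twodirect} with a width--threshold vertex. Instead it takes $w$ to be the endpoint of the \emph{shortest} initial segment $\lambda_w$ with $U_1^{x_i}(\lambda_w)=U_2^{x_i}(\lambda_w)$, i.e.\ the first merge point. For any strictly shorter initial segment the two directions differ, so Lemma~\ref{twodirect} applies essentially verbatim to give that the CAT(0) geodesic from $C(w)$ to $C(*)$ passes within $101N^2\delta_0$ of $C(x_i)$. Then one argues directly at $w$: if the CAT(0) geodesic from $C(v)$ to $C(*)$ misses $C(w)$ by more than $17N^2\delta_0$, then $v$ is at least $16N^2\delta_0$ wide in the $U_1^{x_i}(\lambda_w)$ direction at $w$, and a short case analysis (either no extension is $16N^2$ wide at $w$, or every wide extension is wide in the single direction $U_1^{x_i}(\lambda_w)$ and the fan--avoidance plus Remark~\ref{onedirectavoid} kills it) rules this out. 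The constant $118=101+17$ falls out with no drift bookkeeping at all.

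Your route is not obviously wrong, but it has a real gap. Your third paragraph concedes that Claim~2 hinges on bounding the drift of $A^i(\cdot)$ away from $U_1^{x_i}(\lambda_w)$ through mixed two--direction/one--direction stretches, and you do not actually carry this out; phrases like ``once this drift \dots\ are all shown to lie comfortably inside the gap'' are a promissory note, not an argument. Moreover, invoking Claim~1 ``exactly as in Lemma~\ref{twodirect}'' is imprecise: Lemma~\ref{nochange} requires $U_1\neq U_2$ along the segment, which can fail on your $\lambda_w$ if the merge happens before your width--threshold vertex. The paper sidesteps all of this by choosing $w$ to be the merge point itself, so that everything before $w$ is genuinely two--directional (Lemma~\ref{twodirect} applies cleanly) and everything after $w$ is genuinely one--directional (Remark~\ref{onedirectavoid} applies cleanly, with no drift to track). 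That choice of $w$ is the missing idea.
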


\begin{proof}
Let $\lambda_w$ be the shortest initial segment of $\lambda$ such that $U_1^{x_i}(\lambda_w) = U_2^{x_i}(\lambda_w)$, and let $w$ be the endpoint of $\lambda_w$. By Lemma \ref{twodirect}, the CAT(0) geodesic between $C(w)$ and $C(*)$ comes within $X$-distance $101N^2\delta_0$ of $C(x_i)$. Note that if the CAT(0) geodesic between $C(v)$ and $C(*)$ is more than $17N^2\delta_0$ from $C(w)$, then $v$ (equivalently $\lambda$) is at least $16N^2\delta_0$ wide in the $U_1^{x_i}(\lambda_w)$ direction at $w$. When $U_1^{x_i}(\lambda_w) = U_2^{x_i}(\lambda_w)$ we have the following cases:

\noindent \textbf{Case 1:} No geodesic extension of $(\beta, e_{m+1}, \dots, e_i, \lambda_w)$ leads to a bigon $16N^2$ wide at $w$.

In this case, $\lambda$ is not $16N^2$ wide in any direction at $w$, so by CAT(0) geometry, some point on the CAT(0) geodesic between $C(v)$ and $C(*)$ is within $X$-distance $118N^2\delta_0$ of $C(x_i)$..

\noindent \textbf{Case 2:} For any geodesic $\mu$ from $*$ to the endpoint of $(\beta, e_{m+1}, \dots, e_i, \lambda)$, if the bigon determined by $\mu$ and $(\beta, e_{m+1}, \dots, e_i, \lambda)$ is $16N^2$ wide at $w$, then it is wide in the $U_1^{x_i}(\lambda_w)$ direction at $w$.

From Lemma \ref{avoidworks}, Remark \ref{onedirectavoid}, and our filter construction, we know that any interior edge on $\lambda$ after $w$ cannot have its wall on the up edge path of a $U_1(\lambda_w)$ diamond at $w$. If the first edges of $\lambda$ after $w$ are a right fan edge followed by a left fan edge, the left fan edge shares a wall with an interior fan edge adjacent to $\lambda$, and so the left fan edge also cannot have an edge in its wall on the up edge path of a $U_1(\lambda_w)$ diamond at $w$. The same analysis holds for any left or right fan edge following an interior fan edge (except for at most one edge of $\lambda$, which could share a wall with a right/left fan edge based at $w$). Thus by (6) of Remark \ref{filterprops}, $\lambda$ cannot be $16N^2$ wide in the $U_1(\lambda_w)$ direction at $w$, so some point on the CAT(0) geodesic between $C(v)$ and $C(*)$ is within $X$-distance $118N^2\delta_0$ of $C(x_i)$.
\end{proof}

Suppose $X$ is a CAT(0) space, $\ast\in X$ a base point and $B_n(\ast)$  the open $n$-ball about $\ast$. Let $\overline X$ be the compact metric space $X\cup \partial X$. If $F$ is a filter in $X$, let $\overline F$ be  the closure of $F$ in $\overline X$. Since $F$ is connected, $\overline F$ is connected. Since $F$ is one-ended, $\overline F-F$ (the limit set of  $F$)  is contained in $C_n$, a component of $\overline F-B_n(\ast)$, for each $n>0$.  Then $\overline F-F=\cap_{n=1}^\infty C_n$ is the intersection of compact connected subsets of a metric space and so is connected. 

\begin{thm}
Suppose $(W,S)$ is a one-ended right-angled Coxeter system, $\Gamma(W,S)$ contains no visual subgroup isomorphic to $(\Integ_2 \ast \Integ_2)^3$, and $W$ does not visually split as $(\mathbb Z_2\ast\mathbb Z_2)\times A$. Then $W$ has locally connected boundary iff $\Gamma(W,S)$ does not contain a virtual factor separator.
\end{thm}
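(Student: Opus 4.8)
The proof splits along the two implications. For the ``only if'' direction one argues by contraposition: if $\Gamma(W,S)$ contains a virtual factor separator then, since we are in the case that $W$ does not visually split as $(\mathbb Z_2\ast\mathbb Z_2)\times A$, the result of \cite{MRT} quoted at the start of this section applies verbatim and gives that $W$ has non-locally connected boundary. Nothing new is needed there; essentially all of the work goes into the ``if'' direction.

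So assume $\Gamma(W,S)$ has no virtual factor separator; keeping the standing hypotheses that $W$ is one-ended with no visual $(\mathbb Z_2\ast\mathbb Z_2)^3$, we must show that $\partial X$ is locally connected for every CAT(0) space $X$ acted on geometrically by $W$. Since $W$ is one-ended, $\partial X$ is a connected compact metric space, so by the standard characterization of local connectedness for such spaces it is enough to produce, for each $\epsilon>0$, a $\delta>0$ such that any $x,y\in\partial X$ with $d(x,y)<\delta$ lie together in a connected subset of $\partial X$ of diameter at most $\epsilon$. We may assume $\epsilon$ is small enough that the tracking constant of Lemma \ref{sameseg} is bounded by the fixed quantity $\delta_0$. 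Represent $x$ and $y$ by geodesic rays $r,s$ from a basepoint $\ast$; by the cone topology, making $\delta$ small forces $r$ and $s$ to stay within a bounded distance of each other out to distance $M$ from $\ast$, with $M\to\infty$ as $\delta\to 0$. Applying Lemma \ref{sameseg} produces $\Lambda$-geodesic rays $(\beta,e_{m+1},e_{m+2},\dots)$ and $(\beta,d_{m+1},d_{m+2},\dots)$ that $\delta_0$-track $r$ and $s$, with a common initial segment $\beta=(e_1,\dots,e_m)$ of length $m\geq M-\delta_0$; let $x_m$ be the endpoint of $\beta$.

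The next step is to run the filter construction of this section for these two rays: one builds the filter $F$ by iterating the fan construction, at each vertex $v$ reached by a geodesic extension $\lambda$ choosing the connecting $\Gamma$-paths to avoid $\mathrm{lk}(lett(A^i(\lambda)))\cup B(v\to\ast)$ as Lemma \ref{avoidlink} permits, and designating the non-tree edges level by level. The abstract graph $F$ is connected, one-ended and planar and maps properly and label-respectingly into $\Lambda$, hence via $C$ into $X$. As recorded just above the theorem, $\overline F$ is a compact connected subset of $\overline X$ and its limit set $\overline F\setminus F$ is connected; since the two tracking rays sit inside $F$ and are asymptotic to $r$ and $s$, the limit set contains $x$ and $y$. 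What remains is to bound the diameter of $\overline F\setminus F$ in $\partial X$, and this is where Lemma \ref{twodirect} and the lemma following it are used: every vertex of $F$ is a vertex of the tree $T$ obtained by deleting non-tree edges and is the endpoint of a $T$-geodesic extending $\beta$, so for every vertex $v$ of $F$ the CAT(0) geodesic from $C(\ast)$ to $C(v)$ comes within $X$-distance $K:=118N^2\delta_0$ of the fixed point $C(x_m)$, a constant independent of $M$. Consequently every ray from $\ast$ representing a point of $\overline F\setminus F$, being a limit of such geodesics to $C(v)$ with $v\to\infty$, also passes within $K$ of $C(x_m)$; a convexity argument in $X$, together with $d_X(C(\ast),C(x_m))\geq m/A-B\to\infty$ from quasi-isometry, then shows that any two rays from $\ast$ both meeting the $K$-ball about $C(x_m)$ have endpoints whose distance in $\partial X$ tends to $0$ as $m\to\infty$. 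Choosing $M$ (hence $\delta$) large (hence small) enough that this distance is at most $\epsilon$ finishes the argument.

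Given the preceding lemmas, I expect the main obstacle to be the final geometric step: turning the uniform statement ``every CAT(0) geodesic from $\ast$ to $F$ meets the fixed $K$-ball about $C(x_m)$'' into an honest diameter bound on the limit set in the cone topology, and threading the quantifiers so that $d(x,y)<\delta$ really forces $M$ large and $M$ large really forces the limit set small. The other point requiring care is confirming that the fan choices demanded throughout the construction always exist, i.e. that $\mathrm{lk}(lett(A^i(\lambda)))\cup B(v\to\ast)$ never separates $\Gamma$; this is exactly where the hypotheses that $\Gamma$ has no virtual factor separator and $W$ is one-ended are consumed, through Lemma \ref{avoidlink}.
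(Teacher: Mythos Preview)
Your proposal is correct and follows the paper's own argument essentially step for step: the ``only if'' direction by \cite{MRT}, the ``if'' direction by building the controlled filter, invoking the two preceding lemmas to show CAT(0) geodesics to filter vertices stay near the branch point, and deducing a small connected limit set.

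One small imprecision worth flagging: Lemma \ref{twodirect} and the lemma after it do not say that the CAT(0) geodesic from $C(\ast)$ to $C(v)$ comes within $118N^2\delta_0$ of $C(x_m)$. They say it comes within that distance of $C(x_i)$ (or $C(y_i)$), where $x_i$ is the \emph{last} vertex of the left boundary ray $(\beta,e_{m+1},\dots)$ lying on the tree geodesic to $v$ (and symmetrically on the $d$-side). Since $i\geq m$ can be arbitrary, you still need one extra CAT(0) convexity step to pass from $C(x_i)$ down to $C(x_m)$: because $\alpha_r$ $\delta_0$-tracks $r$, the point $C(x_i)$ is $\delta_0$-close to $r$ at a parameter at least that of $C(x_m)$, and convexity of the distance between geodesic rays from $\ast$ then forces the geodesic to $C(v)$ within $119N^2\delta_0$ of $C(x_m)$. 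The paper inserts exactly this line (``By CAT(0) geometry, this geodesic must also pass within $119N^2\delta_0$ of $C(x_m)$''). You already anticipate a convexity argument, just place it here rather than only at the passage to limit rays.
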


\begin{proof}
If  $(W,S)$ has a virtual factor separator, then by \cite{MRT}, $W$ has non-locally connected boundary.  Suppose $W$ acts geometrically on a CAT(0) space $X$, and let $r$ be a CAT(0) geodesic ray based at a point $*$ of $X$. Let $\epsilon > 0$ be given. We find $\delta$ such that if $s$ is a geodesic ray within $\delta$ of $r$ in $\partial X$, then our filter for $r$ and $s$ has (connected) limit set of diameter less than $\epsilon$ in $\partial X$. In what follows, the constants $c$ and $d$ are the tracking constants from Lemma \ref{sameseg}, $\delta_1$ is the tracking constant from Lemma \ref{deltatrack}, and $\delta_0 = max\{1, \delta_1, c+d\}$. Recall $C: \Lambda(W,S)\rightarrow X$ $W$-equivariantly, and suppose for simplicity $C(*)=*$. 
Choose $M$ large enough so that for all $m \geq M-c-d$, if $s$ is an $X$-geodesic ray based at $*$ within $120N^2\delta_0$ of $C(\beta(m))$ for any Cayley geodesic $\beta$ that $\delta_0$-tracks $r$, then $r$ and $s$ are within $\epsilon/2$ in $\partial X$. Choose $\delta$ so that if $r$ and $s$ are within $\delta$ in $\partial X$, then $r$ and $s$ satisfy $d(r(M),s(M)) < 1$. Now, if $r$ and $s$ are within $\delta$ in $\partial X$, by Lemma \ref{sameseg}, $r$ and $s$ can be $\delta_0$-tracked by Cayley geodesics $\alpha_r$ and $\alpha_s$  sharing an initial segment of length at least $M-c-d$. Let $m=M-c-d$ and denote the ``split point'' of $\alpha_r$ and $\alpha_s$ by $x_m$, as in the filter construction. Similarly, let $\alpha_r(i)=x_i$ and $\alpha_s(i)=y_i$ for $i\geq m$. By the previous two lemmas, for any vertex $v$ in the filter $F$ for $\alpha_r$ and $\alpha_s$, the $X$-geodesic from $C(v)$ to $*$ passes within $118N^2\delta_0$ of $C(x_i)$ (or $C(y_i)$), where $i\geq m$. By CAT(0) geometry, this geodesic must also pass within $119N^2\delta_0$ of $C(x_m)$. Thus every geodesic ray in the limit set of $C(F)$ is within $\epsilon/2$ of $r$ in $\partial X$, so this set has diameter less than $\epsilon$ in $\partial X$.
\end{proof}

\section{Two Interesting Examples}

Let $(W,S)$ be the (one-ended) right-angled Coxeter system with  presentation graph $\Gamma$ give by Figure 10:

\begin{center}
\includegraphics{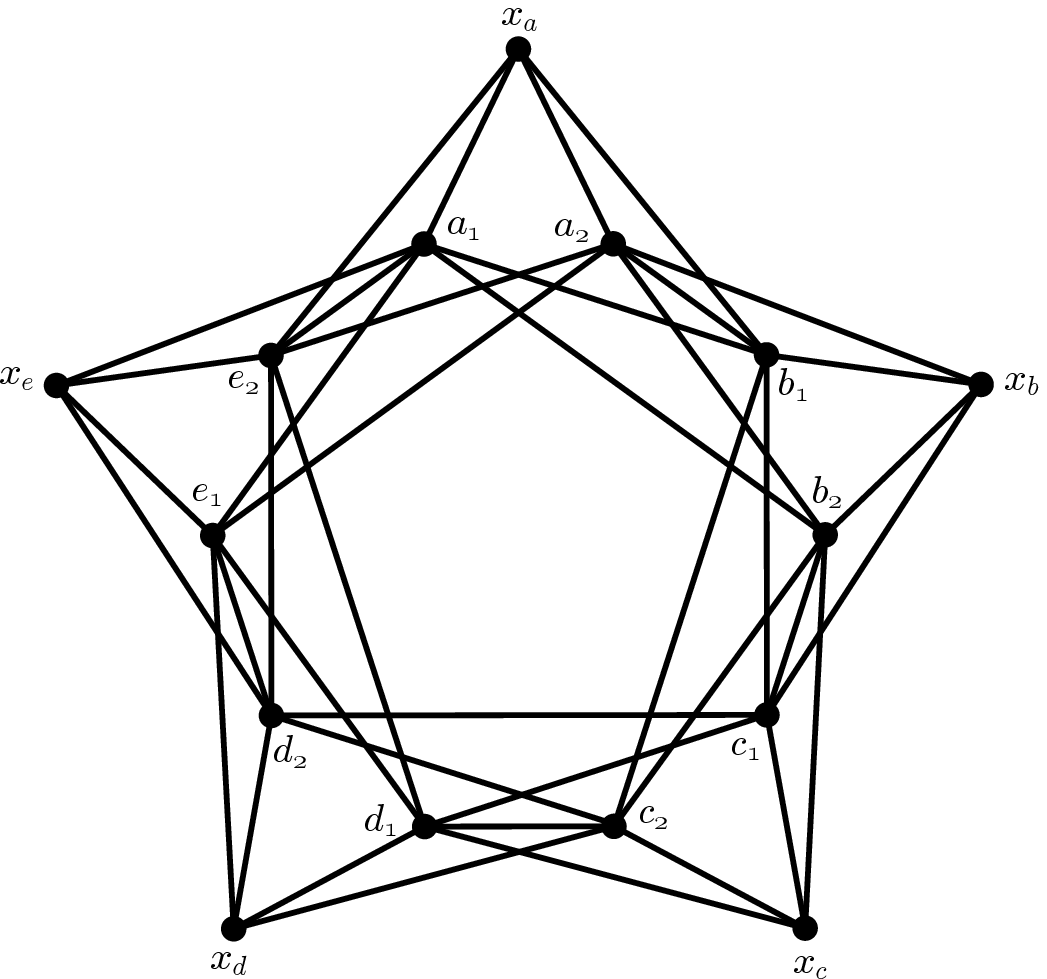}
\end{center}

\centerline{\textsc{Figure 10}}

\vspace{3 mm}

For what follows, let $A=\{a_1,a_2\}$, $B=\{b_1,b_2\}$, $C=\{c_1,c_2\}$, $D=\{d_1,d_2\}$ and $E=\{e_1,e_2\}$.

It is not hard to check that $\Gamma$ has no virtual factor separator, $(W,S)$ does not visually split as a direct product and that $(W,S)$has no visual $(\Integ_2 * \Integ_2)^3$. However, $\Gamma$ contains product separators: for example, $A\cup D$ commutes with $E$, and $A\cup D \cup E$ separates $x_e$ from the rest of $\Gamma$. 

Corollary 5.7 of \cite{MR} gives specific conditions for when the boundary of a right-angled Coxeter group is non-locally connected:
\begin{cor}
Suppose $(W,S)$ is a right-angled Coxeter system. Then $W$ has non-locally connected boundary if there exist $v,w\in S$ with the following properties:
\begin{enumerate}
\item $v$ and $s$ are unrelated in $W$, and
\item $\text{lk}(v)\cap\text{lk}(w)$ separates $\Gamma(W,S)$, with at least one vertex in $S-\text{lk}(v)\cap\text{lk}(w)$ other than $v$ and $w$.
\end{enumerate}
\end{cor}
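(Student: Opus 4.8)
Write $L=\mathrm{lk}(v)\cap\mathrm{lk}(w)$. The plan is to show that the two hypotheses manufacture a virtual factor separator for $(W,S)$, feed this into the non-local-connectivity half of the \cite{MRT} classification recalled at the start of this section, and dispose of the exceptional product splitting (and the non-one-ended cases) by hand.

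First I would check that $(C,D)=(L,L)$ is a virtual factor separator. Since $v\notin\mathrm{lk}(v)$ and $w\notin\mathrm{lk}(w)$, we have $v,w\in S-L$; hypothesis (1) (i.e.\ $v$ and $w$ unrelated) gives $m(v,w)=\infty$; and every generator lying in $L$ commutes with both $v$ and $w$ by the definition of $L$, so $\{v,w\}$ commutes with $L$. With $C=D=L$ the group $\langle C-D\rangle$ is trivial (hence finite and commuting with $\langle D\rangle$), and $C$ separates $\Gamma(W,S)$ by hypothesis (2); so all the defining conditions hold.

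Second I would peel off the exceptional case. If $W$ is one-ended and does not visually split as $(\Integ_2\ast\Integ_2)\times A$, the \cite{MRT} result applies verbatim to $(L,L)$, so $\partial X$ is non-locally connected for every CAT(0) space $X$ on which $W$ acts geometrically. If instead $W=\langle s,t\rangle\times\langle A\rangle$ with $m(s,t)=\infty$ and $\{s,t\}$ commuting with $A=S-\{s,t\}$, then $\{v,w\}=\{s,t\}$ is impossible: it would force $L=A$, hence $S-L=\{v,w\}$ with no further vertex, contradicting the last clause of hypothesis (2). So $v,w\in A$; they are unrelated in the special subgroup $\langle A\rangle$, and $L\cap A=\mathrm{lk}_A(v)\cap\mathrm{lk}_A(w)$ separates $\Gamma(\langle A\rangle,A)$ with a vertex to spare, so $\langle A\rangle$ satisfies the hypotheses of the corollary on a strictly smaller generating set; by induction on $\abs S$ it has non-locally connected boundary, and since every CAT(0) boundary of $W$ is a suspension of a CAT(0) boundary of $\langle A\rangle$ (see \cite{MRT}), $\partial W$ is non-locally connected too. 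Finally, if $W$ is not one-ended then, the finite and two-ended cases being killed by the extra-vertex clause, $W$ is infinite-ended, and an infinite-ended CAT(0) group has non-locally connected boundary.

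The step I expect to be the main obstacle is precisely this product-case bookkeeping: making precise why the ``extra vertex'' clause of (2) excludes exactly those suspensions $(\Integ_2\ast\Integ_2)\times A$ (with $A$ finite or one-ended word hyperbolic) whose boundary happens to be locally connected, so that the reduction really does land on a group still satisfying the corollary. If instead one wants an argument not resting on the \cite{MRT} implication, the heart of the matter becomes that implication itself: take the infinite-order element $g=vw$, which commutes with $\langle L\rangle$; let $\xi\in\partial X$ be an endpoint of the axis of $g$; and use the walls through $L$, together with the separation of $\Gamma$ they witness, to exhibit in every sufficiently small neighborhood of $\xi$ a pair of boundary points on opposite sides of such a wall that cannot be joined within that neighborhood --- so $\xi$ is a point of non-local connectivity.
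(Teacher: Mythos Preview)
The paper does not prove this statement itself --- it is quoted as Corollary~5.7 of \cite{MR} --- but immediately after stating it the paper makes precisely your central observation: that the hypotheses force $(\mathrm{lk}(v)\cap\mathrm{lk}(w),\,\mathrm{lk}(v)\cap\mathrm{lk}(w))$ to be a virtual factor separator. Your argument then pushes this through by invoking the \cite{MRT} implication (virtual factor separator $\Rightarrow$ non-locally connected boundary) and correctly disposing of the two cases that implication excludes: the visual product splitting $(\Integ_2\ast\Integ_2)\times\langle A\rangle$ (by induction on $\abs{S}$, passing to $A$; your check that $v,w$ must land in $A$ and that both hypotheses descend to $A$ is sound) and the non-one-ended case (the finite and two-ended possibilities are indeed ruled out by the extra-vertex clause, and an infinite-ended CAT(0) group has boundary with infinitely many components accumulating, hence not locally connected).

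The original argument in \cite{MR}, as the paper reports right after the statement, is more direct: it exhibits $(vw)^\infty$ explicitly as a point of non-local connectivity in every CAT(0) boundary of $W$ --- essentially the geometric picture you sketch in your final paragraph as an alternative. That route works uniformly, with no one-ended hypothesis, no product-splitting exception, and no induction, and it names the specific bad boundary point, which is exactly what the paper then exploits in the examples of Section~5. Your route, by contrast, ties the corollary into the virtual-factor-separator classification that is the paper's main theme, at the cost of the extra bookkeeping you flagged.
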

In particular, they show that if such $v,w$ exist, then $(vw)^{\infty}$ is a point of non-local connectivity in any CAT(0) space acted on geometrically by $W$. Note that if $v,w$ exist as in this corollary, then $(\text{lk}(v)\cap\text{lk}(w),\text{lk}(v)\cap\text{lk}(w))$ is a virtual factor separator for $\Gamma(W,S)$.

Let $G_1=\langle S-x_a \rangle$. Note that $\text{lk}(e_1)\cap\text{lk}(e_2)=A\cup D \cup \{x_e\}$ separates $e_2$ from the rest of  $\Gamma(G_1,S-\{x_a\})$, so $G_1$ has non-locally connected boundary, with $(e_1 e_2)^\infty$ a point of non-local connectivity for $G_1$.
Similarly, let $Q=A\cup B \cup E$ and let $G_2=\langle Q\cup \{x_a\} \rangle$. Then $\text{lk}(e_1)\cap\text{lk}(e_2)=A\cup D \cup \{x_e\}$ separates $e_1$ from the rest of $\Gamma(G_2,Q\cup \{x_a\})$, and so $G_2$ also has non-locally connected boundary, also with $(e_1 e_2)^{\infty}$ a point of non-local connectivity. Note that we now have $W=G_1*_Q G_2$, where $\partial G_1$ and $\partial G_2$ have $(e_1 e_2)^{\infty}$ as a point of non-local connectivity and $Q$ contains $e_1$ and $e_2$, so it would seem that $\partial W$ should also have $(e_1 e_2)^{\infty}$ as a point of non-local connectivity. However, our theorem implies $W$ has locally connected boundary.

For our second example consider the right-angled Coxeter group $(G,S)$ with presentation graph of Figure 11. 

\medskip

\begin{center}
\includegraphics{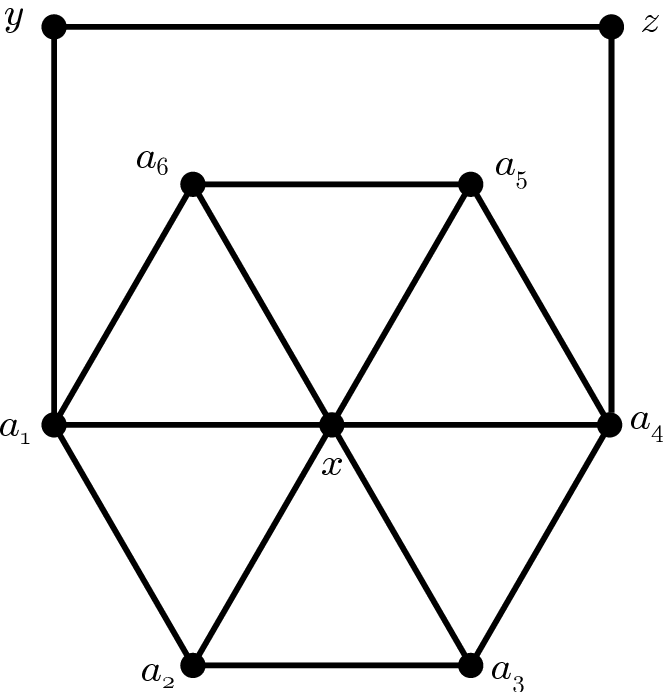}
\end{center}

\centerline{\textsc{Figure 11}}

\vspace{3mm}

Let $A=\{a_1,\ldots , a_6\}$ and $(G', S')$ have the same presentation graph as $(G,S)$ but with each vertex $v$ labeled $v'$. Let $(W,S)$ be the right-angled Coxeter group of the amalgamated product $G\ast_{A=A'}G'$ (where $S=\{x,x', y,y'z,z',A\}$, and $\{x,x'\}$ commutes with $A$). Both $G$ and $G'$ are word hyperbolic and one-ended so they have locally connected boundary. The subgroup $\langle A\rangle$ of $G$ is virtually a hyperbolic surface group and so determines a circle boundary in the boundary of $G$. Still, $W$ has non-locally connected boundary since $(A,A)$ is a virtual factor separator for $(W,S)$.

Aside from being rather paradoxical, these examples show that boundary local connectivity of right-angled Coxeter groups is not accessible through graphs of groups techniques. 

\section{  A Final comment}

If the hypothesis that no $(\mathbb Z_2\ast\mathbb Z_2)^3$ is removed in an attempt to classify all right-angled Coxeter group with locally connected boundary, much of what we develop in this paper carries through. Finitely many directions (as opposed to two) can be defined and used to measure how large the limit set of a filter becomes. As with our development, if the  filter starts to becomes large in a certain direction at a vertex, it is possible to force the subsequent vertices of the filter to slant in any of the other directions.  But when there are only two directions, as in this paper, we are able to show that when we go from being slightly wide in one direction to slightly wide in the other, then the filter did not get too wide in either direction. When there are more than two directions, CAT(0) geometry of right-angled Coxeter groups is not well enough understood yet to accomplish the same thing. It may be that the notion of a slope (or ratio of movement in the various directions) can be developed to overcome this problem.


\end{document}